\numberwithin{equation}{section}
\theoremstyle{definition}
\newtheorem{theorem}{Theorem}
\newtheorem{lemma}{Lemma}
\newtheorem{remark}{Remark}
\newtheorem{example}{Example}
\begin{document}
	
	\title{A Superconvergent  Ensemble HDG Method for Parameterized Convection Diffusion Equations}

\author{Gang Chen%
	\thanks{School of Mathematics Sciences, University of Electronic Science and Technology of China, Chengdu, China (\mbox{cglwdm@uestc.edu.cn}).}
	\and
	Liangya Pi%
	\thanks{Department of Mathematics
		and Statistics, Missouri University of Science and Technology, Rolla, MO, USA (\mbox{lpp4f@mst.edu}).}
	\and
	Liwei Xu%
	\thanks{School of Mathematics Sciences, University of Electronic Science and Technology of China, Chengdu, China (\mbox{xul@uestc.edu.cn})}.
	\and
	Yangwen Zhang%
	\thanks{Department of Mathematics Science, University of Delaware, Newark, DE, USA (\mbox{ywzhangf@udel.edu}).}
}

\date{\today}

\maketitle

\begin{abstract}
	In this paper, we first devise an ensemble hybridizable discontinuous Galerkin (HDG) method to efficiently simulate a group of parameterized convection diffusion PDEs. These PDEs have different coefficients, initial conditions, source terms and boundary conditions. The ensemble HDG discrete system shares a common coefficient matrix with multiple right hand side (RHS) vectors; it  reduces both computational cost and storage. We have two contributions in this paper. First, we derive an optimal $L^2$ convergence rate for the ensemble solutions on a general polygonal domain, which is the first such result in the literature. Second, we  obtain a  superconvergent rate for the ensemble solutions  after an element-by-element postprocessing under some assumptions on the domain and  the coefficients of the PDEs.  We present numerical experiments to confirm our theoretical results.
\end{abstract}

\section{Introduction}
A challenge in numerical simulations is to reduce computational cost while keeping accuracy. Toward this end, many fast algorithms have been proposed, which include domain decomposition methods \cite{Quarteroni_DDM_Book_1999}, multigrid methods \cite{Xu_Multigrid_SIAM_Rev_1992},  interpolated coefficient methods  \cite{Douglas_Dupont_Quasi_MathCom_1975,Sanz_Abia_SINUM_1984,CockburnSinglerZhang1}, and so on. These methods are only suitable for a single simulation, not for a group of simulations with different coefficients, initial conditions, source terms and boundary conditions in many scenarios; for example, one needs  repeated simulations to obtain accurate statistical information about the outputs of interest in some uncertainty quantification problems.  A common way is to treat the simulations seperately;  this requires computational effort and memory. Parallel computing is  one method that can solve this problem if sufficient memory is available. 

However, the computational effort and storage requirement is still a great challenge in real simulations. An ensemble method was proposed by Jiang and Layton \cite{Jiang_Layton_Flow_UQ_2014} to address this issue. They studied  a set of $J$ solutions of the Navier-Stokes equations with different  initial conditions and forcing terms. This algorithm uses the mean of the solutions to form a common coefficient  matrix at each time step. Hence,  the problem is reduced to solving one linear system with many right hand side (RHS) vectors, which can be efficiently computed by many existing algorithms, such as LU factorization,  GMRES, etc.  The ensemble scheme has been extended to many  different models; see, e.g.,    \cite{Jiang_Fluid_JSC_2015,Jiang_NS_NUPDE_2017,Jiang_Layton_Fluid_NMPDE_2015,Jiang_Tran_Flow_CMAM_2015,Gunzburger_Jiang_Schneier_NS_SINUM_2017,Gunzburger_Jiang_Schneier_NS_IJNAM_2018,Fiordilino_Bousinesq_SINUM_2018,Gunzburger_Jiang_Wang_Flow_CMAM_2017,Gunzburger_Jiang_Wang_Flow_IMAJNA_2018}. 
Recently, Luo and Wang  \cite{Luo_Wang_Heat_SINUM_2018}  extended this idea to a stochastic parabolic PDE.  It is worthwhile to mention that all the above works only obtained \emph{suboptimal} $L^2$ convergence rate for the ensemble solutions.

All the previous works have used  continuous Galerkin (CG) methods; however,  for high Reynolds number flows \cite{Jiang_Fluid_JSC_2015,Jiang_Layton_Fluid_NMPDE_2015,Takhirov_Neda_Waters__Flow_NMPDE_2016} using a modified CG method may still cause non-physical oscillations.  
The literature on discontinuous Galerkin (DG) methods for simulating a \emph{single} convection diffusion PDE is already substantial and the research in this area is still active; see, e.g.  \cite{Cockburn_Shu_Convection_Diffusion_SINUM_1998,Buffa_Hughes_Convection_Diffusion_SINUM_2006,Cesenek_Jan_Convection_Diffusion_SINUM_2012}. However, there are \emph{no} theoretical or numerical analysis works on  DG methods for the spatial discretization of a group of  parameterized convection diffusion equations.

However, the number of degrees of freedom for DG methods is much larger compared to CG methods; this is  the main drawback of DG methods. Hybridizable discontinuous Galerkin (HDG) methods were originally proposed by Cockburn, Gopalakrishnan, and Lazarov in \cite{Cockburn_Gopalakrishnan_Lazarov_Unify_SINUM_2009} to fix this issue.  The HDG methods are based on a mixed formulation and introduce a numerical flux and a numerical trace to approximate the flux and the trace of the solution.  The discrete HDG global system is only in terms of the numerical trace variable since we can element-by-element eliminate the numerical flux and solution. Therefore, HDG methods have a significantly smaller number of globally coupled degrees of freedom compareed to DG methods.  Moreover,  HDG methods  keep the advantages of DG methods,  which are suitable for convection diffusion problems; see, e.g., \cite{Chen_Cockburn_Convection_Diffusion_IMAJNA_2012,Chen_Cockburn_Convection_Diffusion_MathComp_2014,Fu_Qiu_Zhang_Convection_Dominated_M2AN_2015,Qiu_Shi_Convection_Diffusion_JSC_2016,Chen_Li_Qiu_Posteriori_Convection_Diffusion_IMAJNA_2016}. Also, HDG methods have been applied to flow problems \cite{Cockburn_Shi_Stokes_MathComp_2013,Cockburn_Gopalakrishnan_Nguyen_Peraire_Sayas_Stokes_MathComp_2011,Cockburn_Shi_Stokes_CF_2014,Cockburn_Says_Divergence_Free_MathComp_2014,Cesmelioglu_Cockburn_Nguyen_Peraire_Oseen_JSC_2013,Cockburn_Shi_Stokes_MathComp_2013,Rhebergen_Cockburn_NS_JCP_2013,Rhebergen_Cockburn_Deforming_JCP_2012} and  hyperbolic equations \cite{Cockburn_Nguyen_Peraire_Hyperbolic_Book_2016,Sanchez_Ciuca_Nguyen_Peraire_Cockburn_Hamiltonian_JCP_2017,Stanglmeier_Nguyen_Peraire_Wave_CMAME_2016}.

In this work, we propose a new Ensemble HDG method to investigate a group of parameterized convection diffusion equations on a Lipschitz polyhedral domain  $\Omega\subset \mathbb{R}^{d} $ $ (d\geq 2)$. For $j=1,2,\cdots, J$,  find $(\bm q_j, u_j)$ satisfying
\begin{equation}\label{concection_pde}
\begin{split}
c_j\bm{q}_j +\nabla u_j&= 0 \quad ~ \mbox{in} \; \Omega\times (0,T],\\
\partial_tu_j+{\nabla}\cdot \bm{q}_j
+\bm\beta_j\cdot\nabla u_j&= f_j \quad  \mbox{in} \; \Omega\times (0,T],\\
u_j&=g_j \quad  \mbox{on} \; \partial\Omega\times (0,T],\\
u_j(\cdot,0)&=u_j^0~~~\mbox{in}\ \Omega,
\end{split}
\end{equation}
where the vector vector fields $\bm{\beta}_j$ satisfy
\begin{align}\label{beta_con}
\nabla\cdot\bm{\beta}_j = 0.
\end{align}
We make other smoothness assumptions on the data of system \eqref{concection_pde} for our analysis.

{\bf{The HDG Method.}} To better describe the Ensemble HDG method, we first give the semidiscretization of the system \eqref{concection_pde} use an existing HDG method \cite{Cockburn_Gopalakrishnan_Sayas_Porjection_MathComp_2010}.   Let $\mathcal{T}_h$ be a collection of disjoint  simplexes $K$ that partition $\Omega$ and let  $\partial \mathcal{T}_h$ be the set $\{\partial K: K\in \mathcal{T}_h\}$. Let $e\in \mathcal{E}_h^o $ be the interior face if the Lebesgue measure of  $e = \partial K^+ \cap \partial K^-$ is non-zero, similarly, $e \in \mathcal{E}_h^{\partial}$ be the boundary face if the Lebesgue measure of  $e = \partial K \cap \partial \Omega$ is non-zero. Finally, we  set
\begin{align*}
(w,v)_{\mathcal{T}_h} := \sum_{K\in\mathcal{T}_h} (w,v)_K,   \quad\quad\quad\quad\left\langle \zeta,\rho\right\rangle_{\partial\mathcal{T}_h} := \sum_{K\in\mathcal{T}_h} \left\langle \zeta,\rho\right\rangle_{\partial K},
\end{align*}
where $(\cdot,\cdot)_K$  denotes the $L^2(K)$ inner product and 
$ \langle \cdot, \cdot \rangle_{\partial K} $ denotes the $L^2$ inner product on $\partial K$.

Let $\mathcal{P}^k(K)$ denote the set of polynomials of degree at most $k$ on the element $K$.  We define the following  discontinuous finite element spaces
\begin{align*}
\bm{V}_h  &:= \{\bm{v}\in [L^2(\Omega)]^d: \bm{v}|_{K}\in [\mathcal{P}^k(K)]^d, \forall K\in \mathcal{T}_h\},\\
{W}_h  &:= \{{w}\in L^2(\Omega): {w}|_{K}\in \mathcal{P}^{k }(K), \forall K\in \mathcal{T}_h\},\\
{Z}_h  &:= \{{z}\in L^2(\Omega): {z}|_{K}\in \mathcal{P}^{k+1}(K), \forall K\in \mathcal{T}_h\},\\
{M}_h  &:= \{{\mu}\in L^2(\mathcal{\varepsilon}_h): {\mu}|_{e}\in \mathcal{P}^k(e), \forall e\in \mathcal{E}_h,\mu|_{\mathcal{E}_h^\partial} = 0\}.
\end{align*}

We use the notation  $ \nabla v_h $ and $ \nabla \cdot \bm r_h $ to denote the gradient of $ v_h \in W_h $ and the divergence of $ \bm r_h \in \bm V_h$ applied piecewise  on each element $K\in \mathcal T_h$.

The semidiscrete HDG method finds $(\bm q_{jh},u_{jh},\widehat u_{jh})\in \bm V_h\times W_h\times M_h$ such that for all $j=1,2\cdots, J$ 
\begin{equation}\label{semi_discretization}
\begin{split}
(c_j\bm{q}_{jh},\bm r_h)_{\mathcal T_h} - (u_{jh}, \nabla\cdot\bm r_h)_{\mathcal T_h}  + \langle \widehat u_{jh}, \bm r_h\cdot\bm n\rangle_{\partial \mathcal T_h}&= -\langle g_j, \bm r_h\cdot\bm n\rangle_{\varepsilon_h^\partial},\\
(\partial_tu_{jh}, v_h)_{\mathcal T_h} - (\bm{q}_{jh} + \bm \beta_j u_{jh},\nabla v_h)_{\mathcal T_h} + \langle \widehat {\bm q}_{jh}\cdot \bm n, v_h\rangle_{\partial\mathcal T_h} &\\
+\langle \bm\beta_j\cdot\bm n \widehat u_{jh}, v_h\rangle_{\partial \mathcal T_h} +\langle \bm\beta_j\cdot\bm n g_j, v_h\rangle_{\varepsilon_h^\partial}&= (f_j,v_h)_{\mathcal T_h} ,\\
\langle \widehat {\bm q}_{jh}\cdot \bm n + \bm{\beta}_j\cdot \bm n \widehat u_{jh}, \widehat v_h\rangle_{\partial \mathcal T_h}&=0,
\end{split}
\end{equation}
for all $(\bm r_h,v_h,\widehat v_h)\in \bm V_h\times W_h\times M_h$. Here
the numerical traces on $\partial\mathcal{T}_h$ are defined as
\begin{align}
\widehat{\bm{q}}_{jh}\cdot \bm n &=\bm q_{jh}\cdot\bm n + \tau_j (u_{jh}-\widehat u_{jh})   \qquad ~ \mbox{on} \; \partial \mathcal{T}_h\backslash\varepsilon_h^\partial, \label{HDG_discrete2_h}\\
\widehat{\bm{q}}_{jh}\cdot \bm n &=\bm q_{jh}\cdot\bm n+ \tau_j (u_{jh}-g_j)  \quad~~~~~~ \mbox{on}\;  \varepsilon_h^\partial, \label{HDG_discrete2_i}
\end{align}
where $\tau_j$ are positive stabilization functions defined on $\partial \mathcal T_h$ satisfying
\begin{align*}
\tau_j = \tau + \bm \beta_j\cdot \bm n ~\textup{on} ~\partial\mathcal T_h,
\end{align*}
and the function $\tau$ is a positive  constant on each element $K\in\mathcal T_h$.

{\bf{The Ensemble HDG Method.}} It is obvious to see that the system \eqref{semi_discretization}-\eqref{HDG_discrete2_i} has $J$ different coefficient matrices. The idea of the Ensemble HDG method is to treat the system to share one common coefficient matrix by changing the variables $c_j$ and $\bm \beta_j$ into their ensemble means. Before we define the Ensemble HDG method, we give some notation first. 

Suppose the time domain $[0,T]$ is uniformly partition into $N$ steps with time step  $\Delta t$  and let $t_n = n\Delta t$ for $n=1,2\cdots, N$.  Moreover,  $\bar c^n $ and $ \bar{\bm{\beta}}^n $ stand for the ensemble means of the inverse coefficient of diffusion and convection coefficient at time $t_n$, respectively, defined by 
\begin{align}\label{ensemblemena}
\bar c^n = \frac 1J \sum_{j=1}^J c_j^n \qquad \textup{and} \qquad
\bar{\bm{\beta}}^n = \frac 1J \sum_{j=1}^J \bm{\beta}_j^n,
\end{align}
the superscript $n$ denotes the function value at the time $t_n$.

Substitute \eqref{HDG_discrete2_h}-\eqref{HDG_discrete2_i} into \eqref{semi_discretization},  and use some simple algebraic manipulation, the ensemble mean \eqref{ensemblemena},  and the previous step to replace the current step to  obtain the Ensemble  HDG formulation: find $(\bm q^n_{jh},u^n_{jh},\widehat u^n_{jh})\in \bm V_h\times W_h\times M_h$ such that  for all $j=1,2,\cdots, J$ 
\begin{subequations}\label{or}
	\begin{align}
	(\bar c^n\bm{q}^{n}_{jh},\bm{r}_h)_{\mathcal{T}_h}-(u^n_{jh},\nabla\cdot \bm{r}_h)_{\mathcal{T}_h}+ \langle\widehat{u}^n_{jh},\bm{r}_h\cdot \bm n \rangle_{\partial{\mathcal{T}_h}} =
	((\bar c^n-c_j^n)\bm{q}^{n-1}_{jh},\bm{r}_h)_{\mathcal{T}_h}\nonumber\\
	\quad-\langle g_j^n,\bm{r}_h\cdot \bm n \rangle_{{\mathcal{E}^{\partial}_h}}, \label{full_discretion_ensemble_a}\\
	(\partial^+_tu^n_{jh},v_h)_{\mathcal T_h}+(\nabla\cdot\bm{q}^n_{jh}, v_h)_{\mathcal{T}_h}-\langle\bm{q}^n_{jh}\cdot \bm{n},\widehat{v}_h\rangle_{\partial{\mathcal{T}_h}}+( \overline{\bm \beta}^n\cdot \nabla u_{jh}^{n}, v_h)_{\mathcal{T}_h}\nonumber\\
	-\langle \overline{\bm \beta}^n\cdot\bm n, {u}_{jh}^{n} \widehat{v}_h\rangle_{\partial\mathcal{T}_h}+\langle \tau ( u^n_{jh}-{\widehat{u}}^n_{jh}),v_h-\widehat{v}_h \rangle_{\partial\mathcal{T}_h}=
	(f^n_j,v_h)_{\mathcal{T}_h}
	+\langle \tau g_j^n,v_h\rangle_{\mathcal{E}^{\partial}_h}\nonumber\\
	+( (\overline{\bm \beta}^n-\bm{\beta}^n_j)\cdot \nabla u_{jh}^{n-1}, v_h)_{\mathcal{T}_h} -\langle (\overline{\bm \beta}^n-\bm \beta_j^n)\cdot\bm n, {u}_{jh}^{n-1} \widehat{v}_h\rangle_{\partial\mathcal{T}_h},\label{full_discretion_ensemble_c}
	\end{align}
\end{subequations}
for all $(\bm r_h,v_h,\widehat{v}_h)\in \bm V_h\times W_h\times M_h$. The initial conditions $u_{jh}^0$ and $\bm q_{jh}^0$ will be specified later. Finally, we let
\begin{align*}
\partial^+_tu^n_{jh} = \frac{u^n_{jh} - u^{n-1}_{jh}}{\Delta t}.
\end{align*}

It is easy to see that the system \eqref{or} shares one matrix with $J$ RHS vectors, and it is more efficient to solve than performing $J$ separate simulations.  It is worth  mentioning that  this is the \emph{first} time that an ensemble scheme has been derived incorporating HDG  methods;  it is even the \emph{first} time for DG methods.  We provide  a rigorous error analysis to obtain an optimal $L^2$ convergence rate for the flux $\bm q_j$  and the  solution $u_j$ on general polygonal domain $\Omega$ in \Cref{error_analysis}. To the best of our knowledge, this is the \emph{first} time in the literature.  One of the excellent features of HDG methods is that we can obtain  superconvergence after an element-by-element postprocessing; we show that this result also holds in the Ensemble HDG algorithm under some conditions on the domain $\Omega$ and the velocity vector fields $\bm{\beta}_j$. This is also the \emph{first superconvergent} ensemble algorithm in the literature. Finally, some numerical experiments are presented to confirm our theoretical results in \Cref{Numericalexperiments}. Furthermore,
we also present numerical results for convection dominated problems with $c_j^{-1} \ll 1$ to demonstrate the performance of the Ensemble  HDG method in this difficult case. The results show that the Ensemble HDG method is able to capture sharp layers in
the solution. A thorough error analysis of the Ensemble HDG method for the convection dominated case will be in another paper.

\section{Stability}
\label{Stability}
We begin with some notation. We use the standard notation $W^{m,p}(D)$ for Sobolev spaces on $D$ with norm $\|\cdot\|_{m,p,D}$ and seminorm $|\cdot|_{m,p,D}$.  We also write $H^{m}(D)$ instead of $W^{m,2}(D)$, and we omit the index $p$ in the corresponding norms and seminorms. Also, we omit the index $m$ when $m=0$ in the corresponding norms
and seminorms. Moreover, we drop the subscript $D$ if there is no ambiguity in the statement.  We denote by $C(0,T;W^{m,s}(\Omega))$ the Banach space of all continuous functions from $[0,T]$ into $W^{m,s}(\Omega)$, and $L^{p}(0,T;W^{m,s}(\Omega))$ for $1\le p\le \infty$ is similarly defined.

To obtain the stability of \eqref{or} in this section,  we assume the data of  \eqref{concection_pde} satisfies 
\begin{description}
	\item[\textbf{(A1):}]  $ f_j \in C(0,T; L^2(\Omega)) $,  $g_j\in C(0,T; H^{1/2}(\partial \Omega))$, $u_j^0\in L^2(\Omega)$,  $c_j\in C(0,T; L^\infty(\Omega))$ and the vector fields $\bm{\beta}_j \in C(0,T; W^{1,\infty}(\Omega))$.
	
	\item[\textbf{(A2):}] There exists a postive constant $c_0$ such that $c_j^n \ge c_0 $,  and  the  ensemble mean satisfies the condition
	\begin{align}
	|\bar c^n-c_j^n| <  \min \{\bar c^n, \bar c^{n-1}\},  \ \forall  \bm x\in\overline\Omega  \ \ \textup{and} \ 1\le n \le N, 1\le j\le J. \label{condition_c}
	\end{align}
\end{description}

It is worth mentioning that we don't assume any conditions like \eqref{condition_c} on the functions $\bm \beta_j$. The function  $\tau$ is a piecewise constant function independent of $j$ satisfying
\begin{align}
\min_{1\le j\le J}(\tau+\frac{1}{2}\bm\beta_j\cdot\bm n) \ge  \frac 1 2 \max_{1\le j\le J}\|\bm{\beta}_j\|_{0,\infty}, \forall \bm x\in \partial\mathcal{T}_h.\label{tau}
\end{align}

Next,  let $ \Pi_\ell $ and $ P_M $ denote the standard $L^2$  projection operators $\Pi_{\ell}: L^2(K)\to \mathcal P^{\ell}(K)$ and $P_M : L^2(e)\to \mathcal P^{k}(e)$ satisfying
\begin{subequations}\label{L2}
	\begin{align}
	(\Pi_{\ell} w, v_h)_K &= (w,v_h)_K,\quad \forall v_h\in \mathcal P^{\ell}(K),\label{L2_do}\\
	\langle P_M  w, \widehat v_h \rangle_e &= \langle w, \widehat v_h \rangle_e,\quad \forall  \widehat v_h\in \mathcal P^{k}(e).\label{L2_edge}
	\end{align}
\end{subequations}
The following error estimates for the $ L^2 $ projections are standard:
\begin{lemma}\label{lemmainter}
	Suppose $k, \ell \ge 0$. There exists a constant $C$ independent of $K\in\mathcal T_h$ such that
	\begin{subequations}
		\begin{align}
		&\|w - \Pi_{\ell}  w\|_K \le Ch^{\ell+1} |w|_{\ell+1,K},  &  &\forall w\in H^{\ell+1}(K), \label{lemmainter_orthoo}\\
		&\|w- P_M w\|_{\partial K} \le Ch^{k+1/2} |w|_{k+1,K}  &  &\forall w\in H^{k+1}(K). \label{lemmainter_orthoe}
		\end{align}
	\end{subequations}
\end{lemma}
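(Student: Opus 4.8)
The plan is to prove both bounds by the classical finite-element approximation machinery: an affine change of variables to a fixed reference simplex $\widehat K$, the Bramble--Hilbert lemma on $\widehat K$, and, for the face estimate, the multiplicative trace inequality. Throughout, the constant $C$ is kept independent of $K$ by invoking the shape-regularity of the mesh $\mathcal T_h$, which controls the Jacobians of the affine maps $F_K:\widehat K\to K$ uniformly in $K$.

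For \eqref{lemmainter_orthoo}, I would first record that $\Pi_\ell$ is a bounded linear projection of $L^2(K)$ that fixes every $p\in\mathcal P^\ell(K)$, so that $w-\Pi_\ell w=(I-\Pi_\ell)(w-p)$ for all such $p$, giving the best-approximation bound $\|w-\Pi_\ell w\|_K\le \inf_{p\in\mathcal P^\ell(K)}\|w-p\|_K$. Pulling back to $\widehat K$ via $F_K$ and using the standard scaling relations between the norms on $K$ and on $\widehat K$, the problem reduces to showing $\inf_{\widehat p}\|\widehat w-\widehat p\|_{\widehat K}\le C\,|\widehat w|_{\ell+1,\widehat K}$ for $\widehat w\in H^{\ell+1}(\widehat K)$; this is exactly the Bramble--Hilbert lemma, since the left-hand side is a bounded functional on $H^{\ell+1}(\widehat K)$ that annihilates every polynomial of degree at most $\ell$. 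Scaling the resulting seminorm back to $K$ then produces the factor $h^{\ell+1}$.

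For \eqref{lemmainter_orthoe}, I would exploit that $P_M$ is the $L^2(e)$-best approximation by $\mathcal P^k(e)$ on each face $e$. Since the face trace of the volume projection $\Pi_k w$ lies in $\mathcal P^k(e)$ for every $e\subset\partial K$, it is an admissible competitor, and hence $\|w-P_M w\|_{\partial K}\le\|w-\Pi_k w\|_{\partial K}$. Applying the multiplicative trace inequality $\|v\|_{\partial K}^2\le C\big(h^{-1}\|v\|_K^2+h\,|v|_{1,K}^2\big)$ with $v=w-\Pi_k w$, and inserting the interior estimate \eqref{lemmainter_orthoo} together with its $H^1$-seminorm analogue $|w-\Pi_k w|_{1,K}\le Ch^{k}|w|_{k+1,K}$ (proved by the same Bramble--Hilbert/scaling argument applied to the $H^1$ seminorm), both terms reduce to $Ch^{2k+1}|w|_{k+1,K}^2$; taking square roots gives the stated $h^{k+1/2}$ rate.

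The only real work is the bookkeeping of the powers of $h$ produced by the affine scaling of the Sobolev seminorms and of the trace term, and confirming that each intermediate constant depends only on $\widehat K$ and the shape-regularity parameter, not on $K$. I do not anticipate a genuine obstacle, which is why the paper labels the result standard. The single point that deserves explicit care is the $H^1$-seminorm bound on $w-\Pi_k w$: because the $L^2$ projection is not $H^1$-orthogonal, this estimate does not follow from an orthogonality argument and must be established separately via the identical reference-element computation.
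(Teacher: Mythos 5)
Your proof is correct: the best-approximation property of $\Pi_\ell$ combined with the Bramble--Hilbert lemma and affine scaling gives \eqref{lemmainter_orthoo}, and bounding $\|w-P_Mw\|_{\partial K}$ by $\|w-\Pi_k w\|_{\partial K}$ (using that $\Pi_k w|_e\in\mathcal P^k(e)$ is an admissible competitor) followed by the multiplicative trace inequality gives \eqref{lemmainter_orthoe}, with the one genuinely delicate point---the $H^1$-seminorm bound $|w-\Pi_k w|_{1,K}\le Ch^k|w|_{k+1,K}$ for the non-$H^1$-orthogonal $L^2$ projection---correctly flagged and handled by the same reference-element argument. The paper gives no proof at all (it declares these estimates standard), and your argument is precisely the classical one being invoked, so there is nothing to reconcile.
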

Moreover, the vector $L^2$ projection $\bm \Pi_{\ell}$ is defined similarly. 

We choose the initial conditions $u_{jh}^0 = \Pi_{k+1} u_0, \bm q_{jh}^0=-\nabla u_{jh}^0/c_j^{0}$. To make the presentation simple for the stability, we assume $g_j=0$ for $j=1,2\cdots, J$ in this section.
\begin{lemma} 
	If condition \eqref{condition_c} holds, then the Ensemble HDG formulation is  unconditionally  stable and we have the following estimate:
	\begin{align*}
	\hspace{0.2em}&\hspace{-0.2em}\max_{1\le n\le N}\|{u}^n_{jh}\|^2_{\mathcal{T}_h}
	+\sum_{n=1}^N\|{u}^n_{jh}-{u}^{n-1}_{jh}\|^2_{\mathcal{T}_h}+\Delta t\sum_{n=1}^N\|\sqrt{\bar c^n}\bm{q}^{n}_{jh}\|^2_{\mathcal{T}_h}+\|\sqrt{\tau}(u^n_{jh}-{\widehat{u}}^n_{jh})\|^2_{\partial\mathcal{T}_h}\nonumber\\
	&\le C\Delta t \sum_{n=1}^N \|f_j^n\|_{\mathcal T_h}^2 + C\|u_{jh}^0\|_{\mathcal T_h}^2 + C\|\bm q_{jh}^0\|_{\mathcal T_h}^2.
	\end{align*}
\end{lemma}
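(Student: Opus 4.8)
The plan is to establish the stability estimate via the standard energy method for HDG discretizations, adapted to handle the ensemble perturbation terms. First I would test the first equation of the Ensemble HDG formulation \eqref{full_discretion_ensemble_a} with $\bm r_h = \bm q^n_{jh}$, the second equation \eqref{full_discretion_ensemble_c} with $v_h = u^n_{jh}$ and $\widehat v_h = \widehat u^n_{jh}$, and then add the resulting equations. Because we assume $g_j = 0$ in this section, the boundary terms on $\mathcal E_h^\partial$ drop out, which simplifies the algebra considerably. The diffusion terms and the numerical-trace coupling terms should combine so that several terms cancel, leaving the energy contribution $(\bar c^n \bm q^n_{jh}, \bm q^n_{jh})_{\mathcal T_h} = \|\sqrt{\bar c^n}\bm q^n_{jh}\|^2_{\mathcal T_h}$ and the stabilization term $\|\sqrt\tau(u^n_{jh}-\widehat u^n_{jh})\|^2_{\partial\mathcal T_h}$ on the left-hand side.

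The next step is to handle the convection terms. Using integration by parts element-by-element together with the divergence-free condition \eqref{beta_con} on $\bm\beta_j$ (and hence on $\overline{\bm\beta}^n$), I expect the volume term $(\overline{\bm\beta}^n\cdot\nabla u^n_{jh}, u^n_{jh})_{\mathcal T_h}$ and the boundary term $\langle \overline{\bm\beta}^n\cdot\bm n, u^n_{jh}\widehat v_h\rangle_{\partial\mathcal T_h}$ to combine into a controllable boundary expression; the skew-symmetry structure of the convection operator under the divergence-free assumption is what makes these terms manageable. For the time-derivative term I would use the elementary identity $(\partial^+_t u^n_{jh}, u^n_{jh})_{\mathcal T_h} = \frac{1}{2\Delta t}\left(\|u^n_{jh}\|^2_{\mathcal T_h} - \|u^{n-1}_{jh}\|^2_{\mathcal T_h} + \|u^n_{jh}-u^{n-1}_{jh}\|^2_{\mathcal T_h}\right)$, which produces both the telescoping energy and the jump-in-time term appearing on the left-hand side of the claimed estimate.

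The main obstacle, and the place where condition \eqref{condition_c} enters decisively, is controlling the ensemble perturbation terms on the right-hand side of \eqref{full_discretion_ensemble_a} and \eqref{full_discretion_ensemble_c} — namely $((\bar c^n - c^n_j)\bm q^{n-1}_{jh}, \bm q^n_{jh})_{\mathcal T_h}$ and the two convection-difference terms involving $(\overline{\bm\beta}^n - \bm\beta^n_j)$ evaluated at the previous time step $n-1$. These couple the current step to the previous step, so they cannot be absorbed directly; instead I would apply the Cauchy–Schwarz and Young inequalities to split each into a current-step piece and a previous-step piece. The diffusion perturbation term will give something like $\|\sqrt{|\bar c^n - c^n_j|}\,\bm q^{n-1}_{jh}\|\cdot\|\sqrt{|\bar c^n-c^n_j|}\,\bm q^n_{jh}\|$; condition \eqref{condition_c}, which guarantees $|\bar c^n - c^n_j| < \min\{\bar c^n, \bar c^{n-1}\}$, is exactly what ensures these weighted norms are strictly dominated by the coercive diffusion energy $\|\sqrt{\bar c^n}\bm q^n_{jh}\|^2$ and its counterpart at step $n-1$, so that after summation over $n$ the previous-step contributions telescope into the current-step energy with a factor strictly less than one. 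The convection-difference terms I would bound using the assumption \eqref{tau} on $\tau$ together with inverse/trace estimates, absorbing them into the stabilization and diffusion energies. The final step is to sum over $n = 1, \dots, N$, apply a discrete Grönwall inequality to close the recursion (the convection contributions producing a $\Delta t$-weighted lower-order term in $\|u^n_{jh}\|^2$), and collect the remaining data terms $\Delta t\sum_n\|f^n_j\|^2_{\mathcal T_h}$, $\|u^0_{jh}\|^2_{\mathcal T_h}$, and $\|\bm q^0_{jh}\|^2_{\mathcal T_h}$ on the right-hand side, yielding unconditional stability with a constant $C$ independent of $\Delta t$ and $h$.
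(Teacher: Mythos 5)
Your overall energy skeleton matches the paper's proof: the same choice of test functions $(\bm q^n_{jh},u^n_{jh},\widehat u^n_{jh})$, the polarization identity for $(\partial^+_t u^n_{jh},u^n_{jh})_{\mathcal T_h}$, Green's formula together with $\langle(\overline{\bm\beta}^n\cdot\bm n)\widehat u^n_{jh},\widehat u^n_{jh}\rangle_{\partial\mathcal T_h}=0$ for the convection part, condition \eqref{tau} to absorb the resulting boundary term into the stabilization, Young's inequality with the strict inequality \eqref{condition_c} (giving a factor $\alpha<1$) for the diffusion perturbation $((\bar c^n-c^n_j)\bm q^{n-1}_{jh},\bm q^n_{jh})_{\mathcal T_h}$, and a final Gronwall argument. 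However, there is a genuine gap in your treatment of the convection-difference terms $((\overline{\bm\beta}^n-\bm\beta^n_j)\cdot\nabla u^{n-1}_{jh},u^n_{jh})_{\mathcal T_h}-\langle(\overline{\bm\beta}^n-\bm\beta^n_j)\cdot\bm n,u^{n-1}_{jh}\widehat u^n_{jh}\rangle_{\partial\mathcal T_h}$, which you propose to handle ``using inverse/trace estimates, absorbing them into the stabilization and diffusion energies.'' This cannot work as stated. The volume term contains $\nabla u^{n-1}_{jh}$, and an inverse estimate only gives $\|\nabla u^{n-1}_{jh}\|_K\le Ch_K^{-1}\|u^{n-1}_{jh}\|_K$; since $\bm\gamma=\overline{\bm\beta}^n-\bm\beta^n_j$ is $O(1)$ (the paper deliberately imposes no smallness or closeness condition on the $\bm\beta_j$), the factor $h^{-1}$ survives, and after summing over $n$ Gronwall yields a constant of size $\exp(CT/h)$, i.e.\ only conditional stability under a CFL-type restriction rather than the claimed unconditional stability. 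Nor can this term be absorbed into the diffusion energy: in HDG, $\bm q_{jh}$ and $-\nabla u_{jh}/\bar c^n$ are distinct discrete objects related only weakly through the first equation, so $\|\sqrt{\bar c^n}\bm q^n_{jh}\|^2_{\mathcal T_h}$ gives no direct control of $\nabla u^{n-1}_{jh}$, and the stabilization controls only the trace mismatch $u^n_{jh}-\widehat u^n_{jh}$ on $\partial\mathcal T_h$.

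The paper closes this gap with a two-part device that is absent from your plan. Write $\bm\gamma=(\bm\gamma-\bm\Pi_0\bm\gamma)+\bm\Pi_0\bm\gamma$, where $\bm\Pi_0$ is the elementwise $L^2$ projection onto constants. For the fluctuation part, $\|\bm\gamma-\bm\Pi_0\bm\gamma\|_{\infty,K}\le Ch_K\|\bm\gamma\|_{1,\infty,K}$ (this is where $\bm\beta_j\in C(0,T;W^{1,\infty}(\Omega))$ is used), and this $O(h_K)$ factor exactly cancels the $h_K^{-1}$ and $h_K^{-1/2}$ losses from the inverse and trace inequalities. For the piecewise-constant part, note that $\bm\Pi_0\bm\gamma\cdot\nabla u^{n-1}_{jh}=\nabla\cdot(\bm\Pi_0\bm\gamma\,u^{n-1}_{jh})$ elementwise and that $\bm r_h=\bm\Pi_0\bm\gamma\,u^{n-1}_{jh}$ is a legitimate test function in $\bm V_h$; plugging it into \eqref{full_discretion_ensemble_a} converts $(\bm\Pi_0\bm\gamma\cdot\nabla u^{n-1}_{jh},u^n_{jh})_{\mathcal T_h}-\langle\bm\Pi_0\bm\gamma\cdot\bm n,u^{n-1}_{jh}\widehat u^n_{jh}\rangle_{\partial\mathcal T_h}$ into $(\bar c^n\bm q^n_{jh},\bm\Pi_0\bm\gamma\,u^{n-1}_{jh})_{\mathcal T_h}-((\bar c^n-c^n_j)\bm q^{n-1}_{jh},\bm\Pi_0\bm\gamma\,u^{n-1}_{jh})_{\mathcal T_h}$, which Young's inequality then splits into small multiples of the diffusion energies at steps $n$ and $n-1$ (using \eqref{condition_c} again) plus lower-order $\|u^{n-1}_{jh}\|^2_{\mathcal T_h}$ terms that Gronwall handles. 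Without this algebraic substitution through the scheme's first equation, the energy argument does not close and unconditional stability is unproven.
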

\begin{proof}
	Take $(\bm r_h,v_h,\widehat{v}_h)=(\bm{q}^{n}_{jh},u^n_{jh},{\widehat{u}^n}_{jh} )$ in \eqref{or},  use the polarization identity
	\begin{align}\label{polarization}
	(a-b)a=\frac{1}{2}(a^2-b^2+(a-b)^2),
	\end{align}
	and add the  \Cref{full_discretion_ensemble_a}  and \Cref{full_discretion_ensemble_c} together to give
	\begin{align}\label{or1}
	\begin{split}
	\hspace{0.1em}&\hspace{-0.1em} \frac{\|{u}^n_{jh}\|^2_{\mathcal{T}_h}-\|{u}^{n-1}_{jh}\|^2_{\mathcal{T}_h}}{2\Delta t}
	+\frac{\|{u}^n_{jh}-{u}^{n-1}_{jh}\|^2_{\mathcal{T}_h}}{2\Delta t} +\|\sqrt{\bar c^n}\bm{q}^{n}_{jh}\|^2_{\mathcal{T}_h}
	+\|\sqrt{\tau}(u^n_{jh}- \widehat{u}^n_{jh})\|^2_{\partial\mathcal{T}_h}\\
	&= - ( \overline{\bm \beta}^n\cdot \nabla u^n_{jh}, u^n_{jh})_{\mathcal{T}_h} + \langle (\overline{\bm \beta}^n\cdot\bm n) {u}^n_{jh},  \widehat{u}^n_{jh}\rangle_{\partial\mathcal{T}_h}+
	((\overline c^n-c^n_j)\bm q_{jh}^{n-1}, \bm{q}^{n}_{jh} )_{\mathcal{T}_h}\\
	&\quad+( (\overline{\bm \beta}^n-\bm{\beta}_j^n)\cdot \nabla u_{jh}^{n-1}, u^n_{jh})_{\mathcal{T}_h}-\langle (\overline{\bm \beta}^n-\bm{\beta}_j^n)\cdot\bm n, {u}_{jh}^{n-1} \widehat{u}^n_{jh}\rangle_{\partial\mathcal{T}_h}+ (f_j^n,  u^n_{jh})_{\mathcal{T}_h}.
	\end{split}
	\end{align}
	By Green's formula and the fact
	$\langle (\overline{\bm \beta}^n\cdot\bm n)\widehat u^{n}_{jh},  \widehat{u}^n_{jh}\rangle_{\partial\mathcal{T}_h}=0$,
	we have
	\begin{align*}
	- ( \overline{\bm \beta}^n\cdot \nabla u^n_{jh}, u^n_{jh})_{\mathcal{T}_h} + \langle (\overline{\bm \beta}^n\cdot\bm n) {u}^n_{jh},  \widehat{u}^n_{jh}\rangle_{\partial\mathcal{T}_h}
	\le \frac{1}{2}\|\sqrt{|\overline{\bm \beta}^n\cdot\bm n|}
	(u^n_{jh}-{\widehat{u}^n}_{jh})\|^2_{\partial\mathcal{T}_h}.
	\end{align*}
	Hence,  condition \eqref{tau} gives
	\begin{align*}
	\hspace{0.2em}&\hspace{-0.2em}\frac{\|{u}^n_{jh}\|^2_{\mathcal{T}_h}-\|{u}^{n-1}_{jh}\|^2_{\mathcal{T}_h}}{2\Delta t}
	+\frac{\|{u}^n_{jh}-{u}^{n-1}_{jh}\|^2_{\mathcal{T}_h}}{2\Delta t} +\|\sqrt{\bar c^n}\bm{q}^{n}_{jh}\|^2_{\mathcal{T}_h}
	+\frac 1 2 \|\sqrt{\tau}(u^n_{jh}-{\widehat{u}}^n_{jh})\|^2_{\partial \mathcal{T}_h}\nonumber\\
	&\le 
	((\overline c^n-c^n_j)\bm q_{jh}^{n-1},\bm{q}^{n}_{jh})_{\mathcal{T}_h}+( (\overline{\bm \beta}^n-\bm{\beta}_j^n)\cdot \nabla u_{jh}^{n-1}, u^n_{jh})_{\mathcal{T}_h} \\
	&\quad -\langle (\overline{\bm \beta}^n-\bm{\beta}_j^n)\cdot\bm n,{u}_{jh}^{n-1} \widehat{u}^n_h\rangle_{\partial\mathcal{T}_h} + (f_j^n, {u}_{jh}^{n})_{\mathcal T_h}\\
	&= R_1 + R_2 + R_3 + R_4.
	\end{align*}
	Next,  we estimate $\{R_i\}_{i=1}^4$.  First, by the condition \eqref{condition_c}, there exist $0<\alpha <1$ such that 
	\begin{align*}
	R_1&=((\overline c^n-c^n_j)\bm q_{jh}^{n-1}, \bm {q}^{n}_{jh})_{\mathcal{T}_h} \le  \frac{\alpha}{2} \|\sqrt{\bar c^n} \bm{q}^{n}_{jh}\|^2_{\mathcal{T}_h} + \frac{\alpha}{2} \|\sqrt{\bar c^{n-1}} \bm{q}^{n-1}_{jh}\|^2_{\mathcal{T}_h}.
	\end{align*}
	The term $R_2 + R_3$ needs a detailed argument. For simplicity, let $\bm \gamma = \overline{\bm \beta}^n-\bm{\beta}_j^n$. We have 
	\begin{align*}
	R_2 + R_3
	&=( \bm \gamma \cdot \nabla u_{jh}^{n-1}
	, u^n_{jh})_{\mathcal{T}_h} -\langle \bm \gamma \cdot\bm n, u_{jh}^{n-1}
	\widehat{u}^n_{jh}\rangle_{\partial\mathcal{T}_h}\\
	&=(  (\bm \gamma  - \bm{\Pi}_0 \bm \gamma )\cdot \nabla u_{jh}^{n-1}
	, u^n_{jh})_{\mathcal{T}_h} -\langle (\bm \gamma - \bm{\Pi}_0 \bm \gamma)  \cdot\bm n, u_{jh}^{n-1} \widehat{u}^n_{jh}\rangle_{\partial\mathcal{T}_h}\\
	&\quad + (\bm{\Pi}_0 \bm \gamma  \cdot \nabla u_{jh}^{n-1}
	, u^n_{jh})_{\mathcal{T}_h} - \langle \bm{\Pi}_0 \bm \gamma \cdot\bm n, u_{jh}^{n-1}
	\widehat{u}^n_{jh}\rangle_{\partial\mathcal{T}_h}\\
	&=(  (\bm \gamma  - \bm{\Pi}_0 \bm \gamma )\cdot \nabla u_{jh}^{n-1}, u^n_{jh})_{\mathcal{T}_h} -\langle (\bm \gamma - \bm{\Pi}_0 \bm \gamma)  \cdot\bm n, u_{jh}^{n-1} \widehat{u}^n_{jh}\rangle_{\partial\mathcal{T}_h}\\
	&\quad +(\bar c^n \bm q^n_{jh},\bm{\Pi}_0 \bm \gamma  u_{jh}^{n-1}
	)_{\mathcal{T}_h} -
	( ( \bar c^n-c_j^n)\bm{q}^{n-1}_{jh},\bm{\Pi}_0 \bm \gamma  u_{jh}^{n-1})_{\mathcal{T}_h},
	\end{align*}
	where we used   \Cref{full_discretion_ensemble_a} in the last identity.  Hence, 
	\begin{align*}
	R_2+R_3 &=(  (\bm \gamma  - \bm{\Pi}_0 \bm \gamma )\cdot \nabla u_{jh}^{n-1}
	, u^n_{jh})_{\mathcal{T}_h} -\langle (\bm \gamma - \bm{\Pi}_0 \bm \gamma)  \cdot\bm n, u_{jh}^{n-1}
	\widehat{u}^n_{jh}\rangle_{\partial\mathcal{T}_h}\\
	&\quad + (\bar c^n \bm q^n_{jh},\bm{\Pi}_0 \bm \gamma  u_{jh}^{n-1} )_{\mathcal{T}_h} -
	( ( \bar c^n-c_j^n)\bm{q}^{n-1}_{jh},\bm{\Pi}_0 \bm \gamma  u_{jh}^{n-1}
	)_{\mathcal{T}_h}\\
	&\le \sum_{K\in \mathcal T_h} \|\bm \gamma - \bm\Pi_0 \bm \gamma\|_{\infty, K} \|\nabla u_{jh}^{n-1}\|_K \| u^n_{jh}\|_K\\
	&\quad  +  \sum_{K\in \mathcal T_h} \|\bm \gamma - \bm\Pi_0 \bm \gamma\|_{\infty, \partial K} \|u_{jh}^{n-1}\|_{\partial K} (\| \widehat{u}^n_{jh}  - {u}^n_{jh}\|_{\partial K} + \|{u}^n_{jh}\|_{\partial K})\\
	& \quad + \|\bm \Pi_0 \bm \gamma\|_{\infty,\mathcal T_h} \|\bar c^n \bm q^n_{jh}\|_{\mathcal T_h} \|u_{jh}^{n-1}\|_{\mathcal T_h} \\
	&\quad + \| (\bar c^n-c_j^n)\bm \Pi_0\bm \gamma\|_{\infty,\mathcal T_h} \| \bm{q}^{n-1}_{jh}\|_{\mathcal T_h}  \|u_{jh}^{n-1}\|_{\mathcal T_h}\\
	& = R_{31} + R_{32} + R_{33} + R_{34}.
	\end{align*}
	For $R_{31}$, use the local inverse inequality:
	\begin{align*}
	R_{31}  \le   C  \sum_{K\in \mathcal T_h} h_K \|\bm \gamma \|_{1,\infty, K} h_K^{-1} \|u_{jh}^{n-1}\|_K \| u^n_{jh}\|_K\le C (\|{u}^{n-1}_{jh}\|_{\mathcal T_h}^2  + \|{u}^{n}_{jh}\|_{\mathcal T_h}^2). 
	\end{align*}
	Apply the trace inequality and inverse inequality for the term $R_{32}$ to give
	\begin{align*}
	R_{32}  &\le   C  \sum_{K\in \mathcal T_h} h_K \|\bm \gamma \|_{1,\infty, K} h_K^{-1/2} \|u_{jh}^{n-1}\|_K (\|  \widehat{u}^n_{jh}  - {u}^n_{jh}\|_{\partial K} +h_K^{-1/2} \|{u}^n_{jh}\|_{K})\\
	%
	%
	&\le C (\|{u}^{n-1}_{jh}\|_{\mathcal T_h}^2 + \|{u}^{n}_{jh}\|_{\mathcal T_h}^2) + \frac 1 4  \|\sqrt{\tau} (\widehat{u}^n_{jh}  - {u}^n_{jh})\|_{\partial \mathcal T_h}^2. 
	\end{align*}
	For the terms $R_{33}$ and $R_{34}$, use  Young's inequality to obtain
	\begin{align*}
	R_{33}&\le \frac{1-\alpha}{4} \|\sqrt{\bar c^n}\bm q^n_{jh}\|_{\mathcal T_h}^2 +  C \|{u}^{n-1}_{jh}\|_{\mathcal T_h}^2,\\
	R_{34} &\le \frac {1-\alpha}{4} \| \sqrt{\bar c^{n-1}} {\bm q}^{n-1}_{jh}\|_{\mathcal T_h}^2 +C\| {u}^{n-1}_{jh}\|_{\mathcal T_h}^2. 
	\end{align*}
	The  Cauchy-Schwarz inequality for the term $R_4$ gives
	\begin{align*}
	R_4 = (f_j^n,  u^n_{jh})_{\mathcal{T}_h} \le \frac 12 (\|f_j^n\|_{\mathcal T_h}^2 + \|u^n_{jh}\|_{\mathcal T_h}^2).
	\end{align*}
	
	We add \eqref{or1} from $n=1$ to $n=N$,
	and use the above inequalities  to get
	\begin{align*}
	\hspace{0.2em}&\hspace{-0.2em}\max_{1\le n\le N}\|{u}^n_{jh}\|^2_{\mathcal{T}_h}
	+\sum_{n=1}^N\|{u}^n_{jh}-{u}^{n-1}_{jh}\|^2_{\mathcal{T}_h}+\Delta t\sum_{n=1}^N\|\sqrt{\bar c^n}\bm{q}^{n}_{jh}\|^2_{\mathcal{T}_h}+\|\sqrt{\tau}(u^n_{jh}-{\widehat{u}}^n_{jh})\|^2_{\partial\mathcal{T}_h}\nonumber\\
	&\le  C\Delta t\sum_{n=1}^N \|e^{u^n}_{jh}\|^2_{\mathcal{T}_h} +C\Delta t \sum_{n=1}^N \|f_j^n\|_{\mathcal T_h}^2 + C\|u_{jh}^0\|_{\mathcal T_h}^2 + C\|\bm q_{jh}^0\|_{\mathcal T_h}^2.
	\end{align*}
	Gronwall's inequality applied to the above inequality  gives the desired result.
\end{proof}

\section{Error analysis}
\label{error_analysis}
The strategy of the  error analysis for the  Ensemble HDG method is based on \cite{ChenCockburnSinglerZhang1} and \cite{Chen_Cockburn_Convection_Diffusion_IMAJNA_2012}. First,  we define the HDG projections, and use an energy argument to obtain an optimal convergence rate for the ensemble solutions. Second, we define an HDG elliptic projection as in \cite{ChenCockburnSinglerZhang1}, which is a crucial step to get the superconvergence. Next, we give our main results, and in the end, we provide a rigorous error estimation for our Ensemble HDG method.

Throughout, we assume the data and the solution of \eqref{concection_pde}  are smooth enough, and the initial conditions  $(\bm q_{jh}^0,u_{jh}^0)$ of the  Ensemble HDG system  \eqref{or} are chosen as in \Cref{Stability}.
%

\subsection{HDG projection}
\label{HDGprojection}
For any $t\in[0,T]$, let $ (\bm{\Pi}_{V}^j \bm{q}_j,\Pi_{W}^j u_j)$ be the HDG projection of $(\bm q_j, u_j)$, where
$\bm{\Pi}_{V}^j \bm{q}_j$ and $\Pi_{W}^ju_j$ denote components of the HDG projection of $\bm{q}_j$ and $u_j$ into $\bm{V}_h$ and $W_h$, respectively. On each element $K\in\mathcal T_h$,  $(\bm{\Pi}_{V}^j \bm{q}_j,\Pi_{W}^j u_j)$ satisfy the following equations
\begin{subequations}\label{HDG_projection_operator}
	\begin{align}
	(\bm\Pi_V^j\bm q_j + \bm \beta_j  \Pi_W^j  u_j ,\bm r)_K&=(\bm q_j +  \bm \beta_j   u_j,\bm r)_K, \label{projection_operator_1}\\
	(\Pi_W^ju_j, w)_K&=(u_j, w)_K ,\label{projection_operator_2}\\
	\langle\bm\Pi_V^j\bm q_j\cdot\bm n+\bm\beta_j\cdot\bm n\Pi_W^j u_j+\tau\Pi_W^j u_j,\mu\rangle_{e} &= \langle\bm q_j\cdot\bm n+\bm\beta_j\cdot\bm n u_j+\tau u_j,\mu\rangle_{e}, \label{projection_operator_3}
	\end{align}
	for all $(\bm r, w,\mu)\in [\mathcal P^{k-1}(K)]^d \times  \mathcal P^{k-1}(K) \times \mathcal P^{k}(e) $ and for all faces $e$ of the simplex $K$.  We notice the projections are only determined by \eqref{projection_operator_3} when $k=0$.
\end{subequations}
The proof of the following lemma is similar to a result established in \cite{Chen_Cockburn_Convection_Diffusion_IMAJNA_2012} and hence is omitted.
\begin{lemma}\label{pro_error}
	Suppose the polynomial degree  satisfies $k\geq 0$ and also $\tau>0$. Then the system \eqref{HDG_projection_operator} is uniquely solvable for $\bm{\Pi}_V^j\bm{q}_j$ and $\Pi_W^j u_j$. Furthermore, there is a constant $C$ independent of $K$ and $\tau$ such that for $\ell_{\bm{q}_j},\ell_{u_j}$ in $[0,k]$  
	\begin{subequations}
		\begin{align*}
		\|{\bm{\Pi}_V^j}\bm{q}_j-\bm q_j\|_K &\leq Ch_K^{\ell_{\bm{q}_j}+1}|\bm{q}_j|_{\bm{H}^{\ell_{\bm{q}_j}+1}(K)}+Ch_K^{\ell_{{u}_j}+1} {|u_j|}_{{H}^{\ell_{{u_j}}+1}(K)},\label{Proerr_q}\\
		\|{{\Pi}_W^j}{u}_j-u_j\|_K &\leq Ch_K^{\ell_{{u}_j}+1}|{u}_j|_{{H}^{\ell_{{u}_j}+1}(K)}+C {h_K^{\ell_{{\bm{q}_j}}+1}}  {|\nabla\cdot \bm{q}_j|}_{{H}^{\ell_{\bm{q}_j}}(K)}.
		\end{align*}
	\end{subequations}
\end{lemma}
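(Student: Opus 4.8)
The plan is to follow the projection-based analysis of Cockburn, Gopalakrishnan and Sayas as adapted to convection--diffusion in \cite{Chen_Cockburn_Convection_Diffusion_IMAJNA_2012}, and to split the work into two stages: first unique solvability of the local system \eqref{HDG_projection_operator} on each simplex $K$, then the approximation bounds via a scaling/Bramble--Hilbert argument.

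For well-posedness I would first verify that \eqref{HDG_projection_operator} is a \emph{square} system: the unknowns $(\bm\Pi_V^j\bm q_j,\Pi_W^j u_j)\in[\mathcal P^k(K)]^d\times\mathcal P^k(K)$ number $(d+1)\binom{k+d}{d}$, while \eqref{projection_operator_1}--\eqref{projection_operator_2} supply $(d+1)\binom{k-1+d}{d}$ conditions and the $d+1$ face equations \eqref{projection_operator_3} supply $(d+1)\binom{k+d-1}{d-1}$, and Pascal's rule $\binom{k+d}{d}=\binom{k-1+d}{d}+\binom{k+d-1}{d-1}$ makes the two totals agree. It then suffices to prove injectivity. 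Setting $\bm q_j=u_j=0$ and writing $\bm V=\bm\Pi_V^j\bm q_j$, $W=\Pi_W^j u_j$, I would test \eqref{projection_operator_1} with $\bm r=\nabla W\in[\mathcal P^{k-1}(K)]^d$, integrate by parts, use \eqref{projection_operator_2} to annihilate $(\nabla\cdot\bm V,W)_K$, substitute the boundary relation from \eqref{projection_operator_3} with $\mu=W|_e$, and invoke $\nabla\cdot\bm\beta_j=0$ from \eqref{beta_con} to reach $\langle(\tau+\tfrac12\bm\beta_j\cdot\bm n)W,W\rangle_{\partial K}=0$. Positivity of $\tau+\tfrac12\bm\beta_j\cdot\bm n$ (which holds under \eqref{tau}) forces $W=0$ on $\partial K$; since $W\in\mathcal P^k(K)$ is $L^2$-orthogonal to $\mathcal P^{k-1}(K)$ and vanishes on $\partial K$, a bubble-function argument (writing $W=b_K\,p$) gives $W\equiv0$ on $K$. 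Feeding $W=0$ back into \eqref{projection_operator_1} and \eqref{projection_operator_3} yields $\bm V\perp[\mathcal P^{k-1}(K)]^d$ and $\bm V\cdot\bm n=0$ on $\partial K$, whence $\nabla\cdot\bm V=0$ and a standard unisolvence argument for the reduced Raviart--Thomas degrees of freedom gives $\bm V\equiv0$. Square plus injective yields unique solvability.

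For the error bounds I would not estimate $\bm\Pi_V^j\bm q_j-\bm q_j$ directly, but introduce the polynomial discrepancies $\bm\delta=\bm\Pi_V^j\bm q_j-\bm\Pi_k\bm q_j$ and $\delta=\Pi_W^j u_j-\Pi_k u_j$ relative to the $L^2$ projections. Subtracting the $L^2$-projection identities from \eqref{HDG_projection_operator} shows that $(\bm\delta,\delta)$ solves the \emph{same} local system, but with a right-hand side built only from the projection errors $\bm q_j-\bm\Pi_k\bm q_j$, $u_j-\Pi_k u_j$ and the convective couplings $\bm\beta_j(u_j-\Pi_k u_j)$. Transplanting to a reference element $\widehat K$, where the solution operator of the finite-dimensional local problem is bounded by the injectivity just proved, and scaling back, I would bound $\|\bm\delta\|_K$ and $\|\delta\|_K$ by $h_K$-weighted norms of this data; a standard approximation step together with \Cref{lemmainter} converts these into the powers $h_K^{\ell_{\bm q_j}+1}$ and $h_K^{\ell_{u_j}+1}$. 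The triangle inequality with \Cref{lemmainter} then produces the two stated estimates, with the cross terms $h_K^{\ell_{u_j}+1}|u_j|$ in the $\bm q_j$-bound and $h_K^{\ell_{\bm q_j}+1}|\nabla\cdot\bm q_j|$ in the $u_j$-bound arising, respectively, from the $\bm\beta_j$ coupling in \eqref{projection_operator_1} and from the divergence identity used to control $\delta$ through \eqref{projection_operator_3}.

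The main obstacle, I expect, is the scaling step for the variable convection field: because $\bm\beta_j$ couples the two components and enters the face equation \eqref{projection_operator_3}, one must ensure that the reference-element solution operator has a uniform bound and that the $\bm\beta_j$-dependent terms do not degrade the $h$-powers. Controlling this cleanly is exactly where the divergence-free condition \eqref{beta_con} and the positivity built into \eqref{tau} are used; the remainder is the routine bookkeeping of the projection-based argument.
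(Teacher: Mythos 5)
You should first note that the paper itself does not prove \Cref{pro_error}: it states that the proof ``is similar to a result established in \cite{Chen_Cockburn_Convection_Diffusion_IMAJNA_2012} and hence is omitted.'' Your route --- a square system plus injectivity for solvability, then error bounds by comparing $(\bm{\Pi}_V^j\bm q_j,\Pi_W^j u_j)$ with the $L^2$ projections and scaling to a reference element --- is exactly the Cockburn--Gopalakrishnan--Sayas strategy \cite{Cockburn_Gopalakrishnan_Sayas_Porjection_MathComp_2010} as adapted to convection--diffusion in that reference, so you are on the intended path. The solvability half of your argument is correct and essentially complete: the dimension count via Pascal's rule is right, and the injectivity chain (test \eqref{projection_operator_1} with $\nabla W$, annihilate $(\nabla\cdot\bm V,W)_K$ by \eqref{projection_operator_2}, insert \eqref{projection_operator_3} with $\mu=W$, use \eqref{beta_con} to arrive at $\langle(\tau+\tfrac12\bm\beta_j\cdot\bm n)W,W\rangle_{\partial K}=0$, then the bubble factorization for $W$ and Raviart--Thomas unisolvence for $\bm V$) is sound. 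You are also right to flag that the bare hypothesis $\tau>0$ in the statement is not enough when $\bm\beta_j\cdot\bm n$ can be large and negative; positivity of $\tau+\tfrac12\bm\beta_j\cdot\bm n$, i.e.\ a condition like \eqref{tau}, is genuinely needed --- a defect of the lemma's wording rather than of your proof.

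The genuine gap is in the second half. Everything in \Cref{pro_error} beyond solvability is quantitative: the constant $C$ must be independent of $K$ \emph{and of $\tau$}, uniformly over the variable fields $\bm\beta_j$. Your plan bounds $(\bm\delta,\delta)$ by asserting that ``the solution operator of the finite-dimensional local problem is bounded by the injectivity just proved,'' but injectivity of a square system only yields an inverse bound for each \emph{fixed} pair $(\tau,\bm\beta_j)$ on the reference element; it says nothing about uniformity as $\tau$ and the mapped field vary, and this is precisely where the claimed $\tau$-independence could fail. Indeed, in \cite{Cockburn_Gopalakrishnan_Sayas_Porjection_MathComp_2010} the sharp projection bounds carry explicit factors of the local maximum of $\tau$ (multiplying the $|u|$-term) and of its reciprocal (multiplying the $|\nabla\cdot\bm q|$-term), so the uniformity is delicate, not ``routine bookkeeping.'' To close this you must either (i) run a compactness argument over the admissible parameter set (e.g.\ $\tau$ confined to a fixed bounded interval, as in the paper's choice $\tau=1+\max_{1\le j\le J}\|\bm\beta_j\|_{0,\infty}$, and $\bm\beta_j$ in a $W^{1,\infty}$-ball, which is compact in $C^0$), accepting that $C$ then depends on those bounds rather than being independent of $\tau$ in an absolute sense, or (ii) reproduce the explicit decomposition estimates of \cite{Cockburn_Gopalakrishnan_Sayas_Porjection_MathComp_2010,Chen_Cockburn_Convection_Diffusion_IMAJNA_2012}, treating the $\bm\beta_j$-terms in \eqref{projection_operator_1} and \eqref{projection_operator_3} as perturbations of the pure-diffusion projection. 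That uniform-bound step is the one missing idea in an otherwise faithful reconstruction of the omitted proof.
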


\subsection{Main results}
We can now state our main result for the Ensemble HDG method.
\begin{theorem}\label{main_theorem}
	Let $(\bm q_j^n, u_j^n)$ and $(\bm q_{jh}^n, u_{jh}^n)$ be the solution of \eqref{concection_pde} at time $t_n$ and \eqref{or}, respectively. If the coefficients $c_j$ satisfy \eqref{condition_c}, then we have 
	\begin{subequations}\label{errror_qu}
		\begin{align}
		\max_{1\le n\le N} \| u^n_j -u_{jh}^n\|_{\mathcal T_h} &\le  C(h^{k+1}+\Delta t),\\
		\sqrt{\Delta t \sum_{n=1}^N \| \bm q^n_j -\bm q_{jh}^n\|_{\mathcal T_h}^2} &\le  C(h^{k+1}+\Delta t).
		\end{align}
	\end{subequations}
	Moreover, if $k\ge 1$,  the elliptic regularity inequality \eqref{Dual_PDE1} holds and the coefficients of the PDEs are independent of time,  then we have 
	\begin{align}\label{eror_ustar}
	\sqrt{\Delta t \sum_{n=1}^N \|  u^n_j - u_{jh}^{n\star}\|_{\mathcal T_h}^2} \le  C(h^{k+2}+\Delta t),
	\end{align}
	where  $u_{jh}^{n\star}$ is the postprocessed approximation defined in  \eqref{post_process_1}.
\end{theorem}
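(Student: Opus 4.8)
The plan is to prove the three bounds in two stages, following the two-projection strategy of \cite{ChenCockburnSinglerZhang1,Chen_Cockburn_Convection_Diffusion_IMAJNA_2012}. For the optimal rates \eqref{errror_qu} I would first decompose each error through the HDG projection of \Cref{HDGprojection}, writing $\bm q_j^n-\bm q_{jh}^n=(\bm q_j^n-\bm\Pi_V^j\bm q_j^n)+(\bm\Pi_V^j\bm q_j^n-\bm q_{jh}^n)=:\bm\delta_q^n+\bm\varepsilon_q^n$ and, analogously, $u_j^n-u_{jh}^n=\delta_u^n+\varepsilon_u^n$ together with a trace splitting $\widehat u_j^n-\widehat u_{jh}^n=\delta_{\widehat u}^n+\varepsilon_{\widehat u}^n$. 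The projection parts $\bm\delta_q^n,\delta_u^n$ are controlled at the optimal order $h^{k+1}$ by \Cref{pro_error}, so the whole task reduces to estimating the discrete errors $\bm\varepsilon_q^n,\varepsilon_u^n,\varepsilon_{\widehat u}^n$.

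Next I would derive the error equations by inserting the exact solution into \eqref{or}, testing the PDE against the discrete spaces and subtracting; the defining relations \eqref{projection_operator_1}--\eqref{projection_operator_3} of the HDG projection are tailored to annihilate the troublesome volume and trace terms, leaving only a time-truncation residual from replacing $\partial_t u_j^n$ by $\partial^+_t u_{jh}^n$, which is $O(\Delta t)$ by Taylor expansion and the assumed smoothness of $u_j$, and the ensemble-perturbation terms built from $(\bar c^n-c_j^n)$ and $(\bar{\bm\beta}^n-\bm\beta_j^n)$ together with the lagged factor at step $n-1$. I would then run exactly the energy argument of the Stability Lemma on the error system: take $(\bm r_h,v_h,\widehat v_h)=(\bm\varepsilon_q^n,\varepsilon_u^n,\varepsilon_{\widehat u}^n)$, apply the polarization identity \eqref{polarization}, absorb the $c$-perturbation term using \eqref{condition_c}, and treat the $\bm\beta$-perturbation term with the same $\bm\Pi_0$ splitting used there for $R_2+R_3$, so that the gap $\|\bm\gamma-\bm\Pi_0\bm\gamma\|_{\infty,K}\le Ch_K|\bm\gamma|_{1,\infty,K}$ cancels the $h_K^{-1}$ coming from the local inverse inequality. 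Summing in $n$ and invoking the discrete Gronwall inequality yields $\max_n\|\varepsilon_u^n\|_{\mathcal T_h}+(\Delta t\sum_n\|\sqrt{\bar c^n}\bm\varepsilon_q^n\|_{\mathcal T_h}^2)^{1/2}\le C(h^{k+1}+\Delta t)$, and the triangle inequality gives \eqref{errror_qu}.

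For the superconvergence \eqref{eror_ustar} (with $k\ge1$) I would introduce the HDG elliptic projection of \cite{ChenCockburnSinglerZhang1}, attached to the stationary diffusion-convection operator; this is exactly where the hypothesis that the coefficients are time-independent enters, since it guarantees that the projection is defined uniformly in $t$ and commutes with $\partial_t$, so $\partial_t$ of the elliptic projection equals the elliptic projection of $\partial_t u_j$. Using it I would set up an Aubin--Nitsche duality argument: let $(\bm\Phi,\Psi)$ solve the adjoint stationary problem with right-hand side $\varepsilon_u^n$ (equivalently $\Pi_W^j u_j^n-u_{jh}^n$), invoke the elliptic-regularity estimate \eqref{Dual_PDE1} to bound $\|\Psi\|_{2}+\|\bm\Phi\|_{1}\le C\|\varepsilon_u^n\|_{\mathcal T_h}$, and test the error equations with the HDG projection of $(\bm\Phi,\Psi)$. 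The orthogonality built into the projection produces one extra power of $h$, giving $\|\Pi_W^j u_j^n-u_{jh}^n\|_{\mathcal T_h}\le C(h^{k+2}+\Delta t)$ after summing in time. Finally, the element-by-element postprocessing \eqref{post_process_1} obeys the standard bound $\|u_j^n-u_{jh}^{n\star}\|_K\le C\|u_j^n-\Pi_{k+1}u_j^n\|_K+C\|\Pi_W^j u_j^n-u_{jh}^n\|_K$; the first term is $O(h^{k+2})$ by \Cref{lemmainter} and the second is the superconvergent quantity just bounded, so squaring, multiplying by $\Delta t$, and summing in $n$ yields \eqref{eror_ustar}.

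I expect the main obstacle to be the duality step in the presence of the ensemble perturbations: I must show that the terms generated by $(\bar c^n-c_j^n)$, $(\bar{\bm\beta}^n-\bm\beta_j^n)$, and the lag from step $n-1$ do not consume the extra order gained from regularity, which requires feeding the already-established optimal estimate \eqref{errror_qu} back into these terms and balancing the $h$-powers produced by the inverse and trace inequalities on the convection contributions, all while threading the discrete Gronwall inequality through the full time sum. The time-independence of the coefficients is essential precisely to keep the elliptic projection compatible with $\partial^+_t$; relaxing it would introduce commutator terms that I do not expect to be controllable at order $h^{k+2}$.
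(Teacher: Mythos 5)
Your treatment of the optimal rates \eqref{errror_qu} is correct and is essentially the paper's own argument: decompose through the HDG projection, control the projection part by \Cref{pro_error}, derive the error equations for the projected errors, and run the stability-type energy argument with the polarization identity, condition \eqref{condition_c}, and the $\bm\Pi_0$ splitting of $\bar{\bm\beta}^n-\bm\beta_j^n$ (this is exactly \Cref{energy_quu}), concluding by the triangle inequality.

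The superconvergence part, however, has a genuine gap in where you place the duality step. You propose to solve the stationary adjoint problem \eqref{Dual_PDE1} with right-hand side the \emph{time-dependent} discrete error $\varepsilon_u^n=\Pi_W^j u_j^n-u_{jh}^n$ and to test the evolution error equations with the HDG projection of $(\bm\Phi,\Psi)$. But those error equations contain the discrete time derivative $(\partial_t^+\varepsilon_u^n, v_h)_{\mathcal T_h}$ and the lagged ensemble terms; taking $v_h=\Pi_W^j\Psi^n$, the pairing $(\partial_t^+\varepsilon_u^n,\Pi_W^j\Psi^n)_{\mathcal T_h}$ can only be bounded by $\|\partial_t^+\varepsilon_u^n\|_{\mathcal T_h}\,\|\Psi^n\|_{\mathcal T_h}\le C\|\partial_t^+\varepsilon_u^n\|_{\mathcal T_h}\,\|\varepsilon_u^n\|_{\mathcal T_h}$, which gains \emph{no} power of $h$, and $\partial_t^+\varepsilon_u^n$ is itself only $O(h^{k+1}+\Delta t)$ (and even that requires differentiating the error equations in time). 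So the scheme as described delivers at best $\|\varepsilon_u^n\|_{\mathcal T_h}\le C(h^{k+1}+\Delta t)$, not $h^{k+2}$; moreover the dual data changes with $n$, so summing in time and integrating by parts discretely does not rescue it without a genuinely parabolic, backward-in-time dual problem, which is not what you set up. The paper avoids this entirely by never applying duality to the evolving error: it introduces the elliptic projection \eqref{projection} and applies the Aubin--Nitsche argument only to the \emph{stationary} projection error $\overline u_{jh}-\Pi_W^j u_j$ and its time derivatives (\Cref{error_dual}, proved in the appendix, where the troublesome term is a data term $\partial_t u_j-\Pi_W^j\partial_t u_j$ of optimal order multiplied by a factor that does gain $h^{\min\{k,1\}}$); the remaining difference $e^{u^n}_{jh}=u^n_{jh}-\overline u^n_{jh}$ is then estimated by a pure energy argument (\Cref{energy_qu}), in which the time-derivative residual $\partial_t^+\overline u_{jh}^n-\partial_t\Pi_W^j u_j^n$ is already superconvergent thanks to \eqref{error_dual_c}. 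Your final postprocessing step (\Cref{super_con} together with \Cref{lemmainter}) and your observation about why time-independence of the coefficients is needed are both correct; it is only the placement of the duality argument that must be repaired, and repairing it leads essentially to the paper's two-stage construction.
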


\begin{remark}
	To the best of our knowledge, all  previous works only contain  \emph{suboptimal} $L^2$ convergence rate for the ensemble solutions $u_j$;  our result \eqref{errror_qu} is the {first} time to obtain the \emph{optimal} $L^\infty(0,T;L^2(\Omega))$ convergence rate on a  general polygonal domain $\Omega$. Moreover, if the coefficients of the PDEs are independent of time, then after an element-by-element postprocessing, we  obtain the superconvergent rate \eqref{eror_ustar} under some conditions on the domain; for example,  a convex domain is sufficient. This is also the first such result in the literature. 
\end{remark}

\subsection{Proof of \eqref{errror_qu} in \Cref{main_theorem}}
\label{Proofof16}
\begin{lemma} \label{lemma_error}
	For all $n=1,2,\cdots, N$, we have the following equalities:
	\begin{subequations}
		\begin{align*}
		(c_j^n  \bm{\Pi}_V^j\bm q_j^n,\bm{r}_h)_{\mathcal{T}_h}-(\Pi_W^ju^n_j,\nabla\cdot \bm{r}_h)_{\mathcal{T}_h}+\langle
		P_M u^n_j,\bm{r}_h\cdot \bm n \rangle_{\partial{\mathcal{T}_h}}
		= (c_j^n  (\bm\Pi_V^j\bm q^n_j-\bm q^n_j),\bm{r}_h)_{\mathcal{T}_h},
		\end{align*}
		and
		\begin{align*}
		&\quad  (\nabla\cdot\bm\Pi_V^j \bm q^n_j, v_h)_{\mathcal{T}_h}-\langle \bm\Pi_V^j \bm q^n_j\cdot \bm{n},\widehat{v}_h\rangle_{\partial{\mathcal{T}_h}}
		+\langle \tau (\Pi_W^j u^n_j-P_M u^n_j),v_h-\widehat{v}_h \rangle_{\partial\mathcal{T}_h}\\
		&\qquad +( {\bm \beta}^n_j \cdot \nabla \Pi_W^j u^n_j, v_h)_{\mathcal{T}_h}
		-\langle{\bm \beta}^n_j \cdot\bm n, (\Pi_W^j u^n_j) \widehat{v}_h\rangle_{\partial\mathcal{T}_h}\\
		&=(f_j^n-\partial_t u^n_j,v_h)_{\mathcal{T}_h},
		\end{align*}
		for all $(\bm r_h,v_h,\widehat{v}_h)\in \bm V_h\times W_h\times M_h$ and $j=1,2,\cdots,J$.
	\end{subequations}
\end{lemma}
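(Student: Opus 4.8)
This lemma asserts that the HDG projections $(\bm\Pi_V^j \bm q_j^n, \Pi_W^j u_j^n)$, together with the $L^2$ boundary projection $P_M u_j^n$, satisfy a discrete variational identity that mirrors the Ensemble HDG formulation \eqref{or} but with the exact solution data inserted. Since the exact solution $(\bm q_j, u_j)$ of \eqref{concection_pde} is assumed smooth, my approach is purely algebraic: I would start from the continuous equations \eqref{concection_pde}, test them against the discrete spaces, and systematically exchange the exact solution for its projection using the three defining relations \eqref{projection_operator_1}, \eqref{projection_operator_2}, \eqref{projection_operator_3} of \Cref{HDG_projection_operator}.

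For the first equality, I would begin with the term $(c_j^n \bm\Pi_V^j \bm q_j^n, \bm r_h)_{\mathcal T_h}$ and write it as $(c_j^n \bm q_j^n, \bm r_h)_{\mathcal T_h} + (c_j^n(\bm\Pi_V^j\bm q_j^n - \bm q_j^n), \bm r_h)_{\mathcal T_h}$; the first of these equals $-(\nabla u_j^n, \bm r_h)_{\mathcal T_h}$ by the first PDE in \eqref{concection_pde}. An elementwise integration by parts then turns $-(\nabla u_j^n, \bm r_h)_{\mathcal T_h}$ into $(u_j^n, \nabla\cdot\bm r_h)_{\mathcal T_h} - \langle u_j^n, \bm r_h\cdot\bm n\rangle_{\partial\mathcal T_h}$. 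Finally I would replace $u_j^n$ by $\Pi_W^j u_j^n$ in the volume term (using \eqref{projection_operator_2}, valid since $\nabla\cdot\bm r_h \in \mathcal P^{k-1}(K)$) and by $P_M u_j^n$ in the boundary term (using \eqref{L2_edge}, valid since $\bm r_h\cdot\bm n|_e \in \mathcal P^k(e)$), leaving exactly the claimed projection-error remainder on the right.

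For the second equality the structure is analogous but uses the second PDE $\partial_t u_j + \nabla\cdot\bm q_j + \bm\beta_j\cdot\nabla u_j = f_j$. I would insert $\bm\Pi_V^j\bm q_j^n$ and $\Pi_W^j u_j^n$ into the convection and divergence terms, integrate by parts where needed to produce the boundary trace and numerical-trace terms, and crucially invoke the flux-matching relation \eqref{projection_operator_3} to reconcile the combined boundary contributions $\langle\bm\Pi_V^j\bm q_j^n\cdot\bm n + \bm\beta_j^n\cdot\bm n\,\Pi_W^j u_j^n + \tau\Pi_W^j u_j^n, \widehat v_h\rangle$ against the exact-solution counterpart. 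The single-valuedness of $\widehat v_h \in M_h$ across interior faces, combined with $\langle \bm q_j^n\cdot\bm n, \widehat v_h\rangle_{\partial\mathcal T_h}$ vanishing for the conforming exact flux, is what lets the extra projection terms collapse.

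The main obstacle will be the careful bookkeeping of the boundary terms in the second identity: one must track how the stabilization term $\langle\tau(\Pi_W^j u_j^n - P_M u_j^n), v_h - \widehat v_h\rangle$ and the convection boundary terms are generated, ensuring that the pieces produced by integration by parts align precisely with what \eqref{projection_operator_3} supplies on each face $e$. The use of $\nabla\cdot\bm\beta_j = 0$ from \eqref{beta_con} will also be needed to rewrite the convection term in the correct (conservative versus non-conservative) form so that the identity matches the form appearing in \eqref{full_discretion_ensemble_c}.
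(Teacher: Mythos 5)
Your proposal is correct and takes essentially the same route as the paper's own proof: split each projection into the exact solution plus a projection error, use the PDE and elementwise integration by parts on the exact part, annihilate the volume remainders via \eqref{projection_operator_1}--\eqref{projection_operator_2}, and collapse the boundary remainders using \eqref{projection_operator_3}, the $P_M$ property, and the single-valuedness of $\widehat v_h\in M_h$ against the continuous exact flux. The steps you flag as requiring care (the conservative rewriting via $\nabla\cdot\bm\beta_j=0$ and the face-by-face bookkeeping feeding into \eqref{projection_operator_3}) are exactly the steps the paper carries out, so there is no gap.
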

\begin{proof} 
	By the definitions of $\Pi_W^j$ in \eqref{projection_operator_2},  $P_M$ in \eqref{L2_edge}, and the first equation \eqref{concection_pde}, we get
	\begin{align*}
	\hspace{1em}&\hspace{-1em}
	(c_j^n  \bm{\Pi}_V^j\bm q_j^n,\bm{r}_h)_{\mathcal{T}_h}-(\Pi_W^ju^n_j,\nabla\cdot \bm{r}_h)_{\mathcal{T}_h}+\langle
	P_M u^n_j,\bm{r}_h\cdot \bm n \rangle_{\partial{\mathcal{T}_h}}\\
	&=(c_j^n\bm{q}_j^n,\bm{r}_h)_{\mathcal{T}_h}-(\Pi_W^ju^n_j,\nabla\cdot \bm{r}_h)_{\mathcal{T}_h}+\langle
	P_M u^n_j,\bm{r}_h\cdot \bm n \rangle_{\partial{\mathcal{T}_h}} +  (c_j^n  (\bm\Pi_V^j\bm q^n_j-\bm q^n_j),\bm{r}_h)_{\mathcal{T}_h} \\
	&=(c_j^n\bm{q}_j^n,\bm{r}_h)_{\mathcal{T}_h}-(u^n_j,\nabla\cdot \bm{r}_h)_{\mathcal{T}_h}+\langle  u^n_j,\bm{r}_h\cdot \bm n \rangle_{\partial{\mathcal{T}_h}} +  (c_j^n  (\bm\Pi_V^j\bm q^n_j-\bm q^n_j),\bm{r}_h)_{\mathcal{T}_h} \\
	&=(c_j^n\bm{q}_j^n + \nabla u_j^n,\bm{r}_h)_{\mathcal{T}_h} +  (c_j^n  (\bm\Pi_V^j\bm q^n_j-\bm q^n_j),\bm{r}_h)_{\mathcal{T}_h}\\
	&= (c_j^n  (\bm\Pi_V^j\bm q^n_j-\bm q^n_j),\bm{r}_h)_{\mathcal{T}_h}.
	\end{align*}
	This proves the first identity. 
	
	Next, we prove the second identity. First
	\begin{align*}
	&\quad(\nabla\cdot\bm\Pi_V^j \bm q_j^n, v_h)_{\mathcal{T}_h}-\langle \bm\Pi_V^j \bm q_j^n\cdot \bm{n},\widehat{v}_h\rangle_{\partial{\mathcal{T}_h}}
	+\langle \tau (\Pi_W^j u_j^n-P_M u_j^n),v_h-\widehat{v}_h \rangle_{\partial\mathcal{T}_h}\\
	&\qquad+( {\bm \beta}_j \cdot \nabla \Pi_W^j u_j, v_h)_{\mathcal{T}_h}
	-\langle{\bm \beta}_j^n \cdot\bm n, (\Pi_W^j u_j^n) \widehat{v}_h\rangle_{\partial\mathcal{T}_h}\\
	&=(\nabla\cdot\bm{q}_j^n, v_h)_{\mathcal{T}_h} + (\nabla\cdot(\bm\Pi_V^j \bm q_j^n - \bm q_j^n), v_h)_{\mathcal{T}_h} 
	-\langle \bm\Pi_V^j \bm q_j^n\cdot \bm{n},\widehat{v}_h\rangle_{\partial{\mathcal{T}_h}} \\
	&\quad +\langle \tau (\Pi_W^j u_j^n-P_M u_j^n),v_h-\widehat{v}_h \rangle_{\partial\mathcal{T}_h} +( {\bm \beta}_j^n \cdot \nabla u_j^n, v_h)_{\mathcal{T}_h} \\
	& \quad +( {\bm \beta}_j^n \cdot \nabla (\Pi_W^j u_j^n - u_j^n), v_h)_{\mathcal{T}_h} 	-\langle{\bm \beta}_j^n \cdot\bm n, (\Pi_W^j u_j^n) \widehat{v}_h\rangle_{\partial\mathcal{T}_h}.
	\end{align*}
	By the definition of $\Pi_V^j$ and $\Pi_W^j$ in \eqref{projection_operator_1} and $\nabla\cdot\bm \beta_j^n=0$, we have 
	\begin{align*}
	\hspace{1em}&\hspace{-1em}(\nabla\cdot(\bm\Pi_V^j \bm q_j^n - \bm q_j^n), v_h)_{\mathcal{T}_h} +( {\bm \beta}_j^n \cdot \nabla (\Pi_W^j u_j^n - u_j^n), v_h)_{\mathcal{T}_h}\\
	&=  -(\bm\Pi_V^j \bm q_j^n - \bm q_j^n, \nabla v_h)_{\mathcal{T}_h} + \langle (\bm\Pi_V^j \bm q_j^n - \bm q_j^n)\cdot\bm n, v_h\rangle_{\partial\mathcal T_h} \\
	&\quad -( {\bm \beta}_j^n (\Pi_W^j u_j^n - u_j^n), \nabla v_h)_{\mathcal{T}_h} + \langle (\bm \beta_j^n \cdot \bm n )(\Pi_W^j u_j^n - u_j^n),  v_h\rangle_{\partial\mathcal{T}_h}\\
	&=  \langle (\bm\Pi_V^j \bm q_j^n - \bm q_j^n)\cdot\bm n, v_h\rangle_{\partial\mathcal T_h}  + \langle (\bm \beta_j^n \cdot \bm n )(\Pi_W^j u_j^n - u_j^n),  v_h\rangle_{\partial\mathcal{T}_h}.
	\end{align*}
	Using $(\nabla\cdot\bm{q}_j^n, v_h)_{\mathcal{T}_h} +( {\bm \beta}_j^n \cdot \nabla u_j^n, v_h)_{\mathcal{T}_h}  = (f_j^n-\partial_t u_j^n,v_h)_{\mathcal{T}_h}$ and  \eqref{projection_operator_3}, we have 
	\begin{align*}
	&\quad(\nabla\cdot\bm\Pi_V^j \bm q_j^n, v_h)_{\mathcal{T}_h}-\langle \bm\Pi_V^j \bm q_j^n\cdot \bm{n},\widehat{v}_h\rangle_{\partial{\mathcal{T}_h}}
	+\langle \tau (\Pi_W^j u_j^n-P_M u_j^n),v_h-\widehat{v}_h \rangle_{\partial\mathcal{T}_h}\\
	&\qquad+( {\bm \beta}_j \cdot \nabla \Pi_W^j u_j, v_h)_{\mathcal{T}_h}
	-\langle{\bm \beta}_j^n \cdot\bm n, (\Pi_W^j u_j^n) \widehat{v}_h\rangle_{\partial\mathcal{T}_h}\\
	&=(f_j^n-\partial_t u_j^n,v_h)_{\mathcal{T}_h} + \langle(\bm\Pi_V^j \bm q_j^n - \bm q_j^n)\cdot \bm{n},v_h-\widehat{v}_h\rangle_{\partial{\mathcal{T}_h}}\\
	&\quad+ \langle \tau (\Pi_W^j u_j^n-P_M u_j^n),v_h-\widehat{v}_h \rangle_{\partial\mathcal{T}_h}  +\langle{(\bm \beta}_j^n \cdot\bm n) (\Pi_W^j u_j^n - u_j^n),  v_h-\widehat{v}_h\rangle_{\partial\mathcal{T}_h}\nonumber\\
	&=(f_j^n-\partial_t u_j^n,v_h)_{\mathcal{T}_h}.
	\end{align*} 
\end{proof}

Then, substracting the result of  \Cref{lemma_error} from the Ensemble HDG system \eqref{or} gives the following error equations.
\begin{lemma} 
	For $\eta^{u^n}_{jh}=u_{jh}^n-\Pi_W^ju^n_j$, $\eta^{\bm q^n}_{jh}=\bm q_{jh}^n-\bm{\Pi}_V^j\bm q^n_j$ and  $ \eta^{\widehat{u}^n}_{jh}=
	\widehat{u}_{jh}^n-	P_Mu_j^n$, for all $j = 1,2,\cdots,J$, we have  the following  error equations:
	\begin{subequations}\label{error0}
		\begin{align}\label{error_aa}
		\begin{split}
		\hspace{1em}&\hspace{-1em}(\bar c^n\eta_{jh}^{\bm q^n},\bm{r}_h)_{\mathcal{T}_h}
		-(\eta^{u^n}_{jh},\nabla\cdot \bm{r}_h)_{\mathcal{T}_h}+\langle \eta^{\widehat{u}^n}_{jh},\bm{r}_h\cdot \bm n \rangle_{\partial{\mathcal{T}_h}} \\
		&=
		( ( \bar c^n-c_j^n)(\bm{q}^{n-1}_{jh}-
		\bm{\Pi}_V^j{\bm{q}}^{n}_{j}),\bm{r}_h)_{\mathcal{T}_h} - (c_j^n  (\bm\Pi_V^j\bm q^n_j-\bm q^n_j),\bm{r}_h)_{\mathcal{T}_h},
		\end{split}
		\end{align}
		and 		
		\begin{align}
		\begin{split}
		\hspace{1em}&\quad (\partial^+_t\eta^{u^n}_{jh},v_h)_{\mathcal T_h}+(\nabla\cdot \eta_{jh}^{\bm q^n}, v_h)_{\mathcal{T}_h}-\langle \eta^{\bm{q}^n}_{jh}\cdot \bm{n},\widehat{v}_h\rangle_{\partial{\mathcal{T}_h}}
		+( \overline{\bm \beta}^n\cdot \nabla \eta^{u^n}_{jh}, v_h)_{\mathcal{T}_h} \\
		& \quad -\langle \overline{\bm \beta}^n\cdot\bm n, \eta^{{u}^n}_{jh} \widehat{v}_h\rangle_{\partial\mathcal{T}_h}+
		\langle \tau ( \eta^{u^n}_{jh}-\eta^{\widehat{u}}_{jh}),v_h-\widehat{v}_h \rangle_{\partial\mathcal{T}_h} \\
		& = (\partial_t{u}_{j}^n-\partial_t^+\Pi_W^j u_j^n, v_h)_{\mathcal{T}_h} + ( (\overline{\bm \beta}^n-\bm{\beta}_j^n)\cdot \nabla (u_{jh}^{n-1}
		-\Pi_W^j{u}_{j}^{n}), v_h)_{\mathcal{T}_h} \\
		&\quad -\langle (\overline{\bm \beta}^n_j-\bm{\beta}_j^n)\cdot\bm n, ({u}_{jh}^{n-1}-\Pi_W^j{u}^n_{j}) \widehat{v}_h\rangle_{\partial\mathcal{T}_h},\label{error_bb}
		\end{split}
		\end{align}
	\end{subequations}
	for all $(\bm r_h,v_h,\widehat{v}_h)\in \bm V_h\times W_h\times M_h$ and $n=1,2,\cdots,N$.
\end{lemma}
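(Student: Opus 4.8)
The plan is to derive \eqref{error_aa} and \eqref{error_bb} by a purely algebraic subtraction: the two identities of \Cref{lemma_error}, which are satisfied by the HDG projection $(\bm\Pi_V^j\bm q_j^n,\Pi_W^j u_j^n,P_M u_j^n)$, are subtracted from the two equations \eqref{full_discretion_ensemble_a} and \eqref{full_discretion_ensemble_c} of the scheme \eqref{or}, which are satisfied by $(\bm q_{jh}^n,u_{jh}^n,\widehat u_{jh}^n)$. Because both sets of relations are tested against the same spaces $\bm V_h\times W_h\times M_h$, the differences of the leading terms assemble directly into the projection errors $\eta^{\bm q^n}_{jh}$, $\eta^{u^n}_{jh}$, $\eta^{\widehat u^n}_{jh}$, and no estimates are needed. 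The only genuine bookkeeping arises from two sources: the ensemble means $\bar c^n$, $\bar{\bm\beta}^n$ in \eqref{or} against the index-dependent $c_j^n$, $\bm\beta_j^n$ in \Cref{lemma_error}, and the backward difference quotient $\partial_t^+u_{jh}^n$ against the exact time derivative $\partial_t u_j^n$ carried on the right of the second identity.

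For \eqref{error_aa} I would test the first equation of each pair against the same $\bm r_h$. The projection identity has the flux term $(c_j^n\bm\Pi_V^j\bm q_j^n,\bm r_h)_{\mathcal T_h}$ while the scheme has $(\bar c^n\bm q_{jh}^n,\bm r_h)_{\mathcal T_h}$, so I rewrite $c_j^n\bm\Pi_V^j\bm q_j^n=\bar c^n\bm\Pi_V^j\bm q_j^n-(\bar c^n-c_j^n)\bm\Pi_V^j\bm q_j^n$; upon subtraction the leading term becomes $(\bar c^n\eta^{\bm q^n}_{jh},\bm r_h)_{\mathcal T_h}$ and the fluctuation $-((\bar c^n-c_j^n)\bm\Pi_V^j\bm q_j^n,\bm r_h)_{\mathcal T_h}$ moves to the right. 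Combining this with the explicit term $((\bar c^n-c_j^n)\bm q_{jh}^{n-1},\bm r_h)_{\mathcal T_h}$ already present on the right of \eqref{full_discretion_ensemble_a} yields the single source $((\bar c^n-c_j^n)(\bm q_{jh}^{n-1}-\bm\Pi_V^j\bm q_j^n),\bm r_h)_{\mathcal T_h}$, which is precisely where the $n-1$/$n$ time-level mismatch in \eqref{error_aa} comes from; the leftover $-(c_j^n(\bm\Pi_V^j\bm q_j^n-\bm q_j^n),\bm r_h)_{\mathcal T_h}$ is carried over unchanged from \Cref{lemma_error}. The Dirichlet data contributions are treated exactly as in \Cref{Stability}: since $\widehat u_{jh}^n=0$ on $\mathcal E_h^\partial$ and $P_M u_j^n=P_M g_j^n$ there, the $L^2$-orthogonality \eqref{L2_edge} identifies the boundary part of $\langle\eta^{\widehat u^n}_{jh},\bm r_h\cdot\bm n\rangle_{\partial\mathcal T_h}$ with the explicit $-\langle g_j^n,\bm r_h\cdot\bm n\rangle_{\mathcal E_h^\partial}$ in \eqref{full_discretion_ensemble_a}, so these terms are accounted for.

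For \eqref{error_bb} the strategy is identical, with two additional ingredients. First, I split the time term as $\partial_t^+u_{jh}^n=\partial_t^+\eta^{u^n}_{jh}+\partial_t^+\Pi_W^j u_j^n$; pairing $\partial_t^+\Pi_W^j u_j^n$ against the exact $\partial_t u_j^n$ from the right of the second identity of \Cref{lemma_error} produces the temporal consistency source $(\partial_t u_j^n-\partial_t^+\Pi_W^j u_j^n,v_h)_{\mathcal T_h}$. Second, I apply the same add-and-subtract to the convection: writing $\bm\beta_j^n\cdot\nabla\Pi_W^j u_j^n=\bar{\bm\beta}^n\cdot\nabla\Pi_W^j u_j^n-(\bar{\bm\beta}^n-\bm\beta_j^n)\cdot\nabla\Pi_W^j u_j^n$, and analogously for the face term $\langle\bm\beta_j^n\cdot\bm n,(\Pi_W^j u_j^n)\widehat v_h\rangle_{\partial\mathcal T_h}$, converts the leading convection into $(\bar{\bm\beta}^n\cdot\nabla\eta^{u^n}_{jh},v_h)_{\mathcal T_h}-\langle\bar{\bm\beta}^n\cdot\bm n,\eta^{u^n}_{jh}\widehat v_h\rangle_{\partial\mathcal T_h}$, while the fluctuations combine with the explicit previous-step convection terms of \eqref{full_discretion_ensemble_c} into the two $(\bar{\bm\beta}^n-\bm\beta_j^n)$ sources acting on $u_{jh}^{n-1}-\Pi_W^j u_j^n$. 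The stabilization terms subtract cleanly into $\langle\tau(\eta^{u^n}_{jh}-\eta^{\widehat u^n}_{jh}),v_h-\widehat v_h\rangle_{\partial\mathcal T_h}$, and the $\langle\tau g_j^n,v_h\rangle_{\mathcal E_h^\partial}$ term is absorbed through the boundary value of $\eta^{\widehat u^n}_{jh}$ and \eqref{L2_edge}, just as above.

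The main obstacle is organizational rather than analytical: keeping the splittings of $\bar c^n-c_j^n$ and $\bar{\bm\beta}^n-\bm\beta_j^n$ consistent with the time levels, so that the fluctuation sources land on $\bm q_{jh}^{n-1}-\bm\Pi_V^j\bm q_j^n$ and $u_{jh}^{n-1}-\Pi_W^j u_j^n$, with the discrete quantity taken at step $n-1$ and the projection at step $n$, and not on any other combination. This index mixing is a deliberate feature of the ensemble scheme, which treats the coefficient and velocity fluctuations explicitly using the previous step, and it is the one place where a careless subtraction would spoil the statement. Once the coefficient, boundary-data, and time-level bookkeeping is carried out correctly, \eqref{error_aa} and \eqref{error_bb} follow termwise.
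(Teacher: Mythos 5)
Your proposal is correct and follows exactly the paper's route: the paper obtains \eqref{error0} by subtracting the projected equations of \Cref{lemma_error} from the Ensemble HDG system \eqref{or}, with precisely the add-and-subtract splittings $c_j^n\bm\Pi_V^j\bm q_j^n=\bar c^n\bm\Pi_V^j\bm q_j^n-(\bar c^n-c_j^n)\bm\Pi_V^j\bm q_j^n$, $\bm\beta_j^n=\bar{\bm\beta}^n-(\bar{\bm\beta}^n-\bm\beta_j^n)$, and $\partial_t^+u_{jh}^n=\partial_t^+\eta^{u^n}_{jh}+\partial_t^+\Pi_W^j u_j^n$ that you describe. Your handling of the boundary data via $\eta^{\widehat u^n}_{jh}|_{\mathcal E_h^\partial}=-P_Mg_j^n$ and the orthogonality \eqref{L2_edge} is also sound, so nothing is missing.
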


\begin{lemma}\label{energy_quu}
	If condition \eqref{condition_c} holds, then we have the following error estimate:
	\begin{equation}
	\begin{split}
	\max_{1\le n\le N}\|\eta^{{u}^n}_{jh}\|_{\mathcal{T}_h}
	+\sqrt{\Delta t\sum_{n=1}^N \|\sqrt{\overline{c}^n}\eta^{\bm{q}^{n}}_{jh}\|^2_{\mathcal{T}_h}} 
	\le C\left(h^{k+1}+\Delta t\right).
	\end{split}
	\end{equation}
\end{lemma}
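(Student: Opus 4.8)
The plan is to mirror the energy argument of the stability proof in \Cref{Stability}, now applied to the error equations \eqref{error_aa}--\eqref{error_bb} satisfied by the projection errors $\eta^{\bm q^n}_{jh}$, $\eta^{u^n}_{jh}$, $\eta^{\widehat u^n}_{jh}$. First I would take $(\bm r_h,v_h,\widehat v_h)=(\eta^{\bm q^n}_{jh},\eta^{u^n}_{jh},\eta^{\widehat u^n}_{jh})$, apply the polarization identity \eqref{polarization} to the discrete time-derivative term, and add the two error equations. The convection terms carrying the ensemble mean $\overline{\bm\beta}^n$ on the left are handled exactly as before: Green's formula together with $\langle(\overline{\bm\beta}^n\cdot\bm n)\eta^{\widehat u^n}_{jh},\eta^{\widehat u^n}_{jh}\rangle_{\partial\mathcal T_h}=0$ bounds them by $\tfrac12\|\sqrt{|\overline{\bm\beta}^n\cdot\bm n|}(\eta^{u^n}_{jh}-\eta^{\widehat u^n}_{jh})\|^2_{\partial\mathcal T_h}$, which condition \eqref{tau} absorbs. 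This leaves the energy quantities $\|\eta^{u^n}_{jh}\|^2_{\mathcal T_h}$, $\|\eta^{u^n}_{jh}-\eta^{u^{n-1}}_{jh}\|^2_{\mathcal T_h}$, $\|\sqrt{\overline c^n}\eta^{\bm q^n}_{jh}\|^2_{\mathcal T_h}$ and $\tfrac12\|\sqrt\tau(\eta^{u^n}_{jh}-\eta^{\widehat u^n}_{jh})\|^2_{\partial\mathcal T_h}$ on the left, and a collection of projection/consistency terms on the right to be estimated.

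Next I would dispatch the benign right-hand side terms. The pure projection term $-(c_j^n(\bm\Pi_V^j\bm q^n_j-\bm q^n_j),\eta^{\bm q^n}_{jh})_{\mathcal T_h}$ is closed by Young's inequality and \Cref{pro_error}, giving $O(h^{k+1})$. The coefficient-difference flux term $((\overline c^n-c_j^n)(\bm q^{n-1}_{jh}-\bm\Pi_V^j\bm q^n_j),\eta^{\bm q^n}_{jh})_{\mathcal T_h}$ plays the role of $R_1$: after writing $\bm q^{n-1}_{jh}-\bm\Pi_V^j\bm q^n_j=\eta^{\bm q^{n-1}}_{jh}+\bm\Pi_V^j(\bm q^{n-1}_j-\bm q^n_j)$, condition \eqref{condition_c} supplies an $\alpha<1$ so that the genuine-error part is absorbed into $\tfrac\alpha2\|\sqrt{\overline c^n}\eta^{\bm q^n}_{jh}\|^2_{\mathcal T_h}+\tfrac\alpha2\|\sqrt{\overline c^{n-1}}\eta^{\bm q^{n-1}}_{jh}\|^2_{\mathcal T_h}$, while the increment part is $O(\Delta t)$. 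The temporal consistency term $(\partial_t u^n_j-\partial^+_t\Pi_W^j u^n_j,\eta^{u^n}_{jh})_{\mathcal T_h}$ is split as $(\partial_t u^n_j-\partial^+_t u^n_j)+\partial^+_t(u^n_j-\Pi_W^j u^n_j)$; a Taylor expansion gives $O(\Delta t)$ for the first piece and \Cref{pro_error} applied to the averaged difference quotient gives $O(h^{k+1})$ for the second.

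The main obstacle is the convection-coefficient-difference pair $T:=((\overline{\bm\beta}^n-\bm\beta_j^n)\cdot\nabla w,\eta^{u^n}_{jh})_{\mathcal T_h}-\langle(\overline{\bm\beta}^n-\bm\beta_j^n)\cdot\bm n, w\,\eta^{\widehat u^n}_{jh}\rangle_{\partial\mathcal T_h}$ with $w=u^{n-1}_{jh}-\Pi_W^j u^n_j$, since bounding $\nabla w$ by the inverse inequality would naively lose a power of $h$. I would reuse the splitting $\bm\gamma:=\overline{\bm\beta}^n-\bm\beta_j^n=(\bm\gamma-\bm\Pi_0\bm\gamma)+\bm\Pi_0\bm\gamma$. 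For the oscillation part, $\|\bm\gamma-\bm\Pi_0\bm\gamma\|_{\infty,K}\le Ch_K\|\bm\gamma\|_{1,\infty,K}$ together with the local inverse and trace inequalities (as in the $R_{31},R_{32}$ estimates) cancels the $h^{-1}$; for the piecewise-constant part, since $\bm\Pi_0\bm\gamma\,w\in\bm V_h$ and $\nabla\cdot(\bm\Pi_0\bm\gamma\,w)=\bm\Pi_0\bm\gamma\cdot\nabla w$, I insert $\bm r_h=\bm\Pi_0\bm\gamma\,w$ into the first error equation \eqref{error_aa}. The two interface contributions $\langle\eta^{\widehat u^n}_{jh},\bm\Pi_0\bm\gamma\cdot\bm n\,w\rangle_{\partial\mathcal T_h}$ cancel, trading the dangerous volume gradient for the benign term $(\overline c^n\eta^{\bm q^n}_{jh},\bm\Pi_0\bm\gamma\,w)_{\mathcal T_h}$ plus the already-controlled right-hand side of \eqref{error_aa}, after which Young's inequality absorbs the $\|\sqrt{\overline c^n}\eta^{\bm q^n}_{jh}\|_{\mathcal T_h}$ factor. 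Throughout I split $w=\eta^{u^{n-1}}_{jh}+\Pi_W^j(u^{n-1}_j-u^n_j)$, so the error part feeds the Gronwall mechanism and the increment part is $O(\Delta t)$.

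Finally I would sum over $n=1,\dots,N$, multiply by $2\Delta t$, and telescope the time-difference terms. The choice of initial data from \Cref{Stability} together with \Cref{pro_error} gives $\|\eta^{u^0}_{jh}\|_{\mathcal T_h}+\|\eta^{\bm q^0}_{jh}\|_{\mathcal T_h}=O(h^{k+1})$, and all consistency contributions accumulate to $O((h^{k+1}+\Delta t)^2)$; a discrete Gronwall inequality then absorbs the residual $C\Delta t\sum_n\|\eta^{u^n}_{jh}\|^2_{\mathcal T_h}$ and yields the stated estimate. The one point needing extra care relative to the stability proof is that $\Pi_W^j$ is time dependent through $\bm\beta_j$, so the difference quotient $\partial^+_t(u^n_j-\Pi_W^j u^n_j)$ must be estimated using the time-regularity hypotheses on the data; this is precisely where the smoothness assumptions of \Cref{main_theorem} enter.
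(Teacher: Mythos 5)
Your proposal is correct and follows essentially the same route as the paper's proof: the same energy argument with test functions $(\eta^{\bm q^n}_{jh},\eta^{u^n}_{jh},\eta^{\widehat u^n}_{jh})$, the same treatment of the ensemble-mean convection terms via Green's formula and condition \eqref{tau}, the identical key device for the troublesome convection-difference term (splitting $\bm\gamma$ into $(\bm\gamma-\bm\Pi_0\bm\gamma)+\bm\Pi_0\bm\gamma$ and substituting $\bm r_h=\bm\Pi_0\bm\gamma\,w$ into the error equation \eqref{error_aa} to avoid the inverse-inequality loss), and the same Gronwall conclusion. Your closing remark about the time dependence of $\Pi_W^j$ is also consistent with how the paper bounds $\Delta t\sum_n\|\partial^+_t(u_j^n-\Pi_W^j u_j^n)\|^2_{\mathcal T_h}$ by $\|\partial_t(u_j-\Pi_W^j u_j)\|^2_{L^2(0,T;L^2(\Omega))}$.
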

\begin{proof}
	We take $(\bm r_h,v_h,\widehat{v}_h)=(\eta^{\bm{q}^{n}}_{jh},\eta^{u^n}_{jh},\eta^{{\widehat{u}}^n}_{jh} )$
	in \eqref{error0},  use the identity \eqref{polarization} and add \Cref{error_aa} and \Cref{29} together to get
	\begin{align}
	\hspace{0.11em}&\hspace{-0.1em} \frac{\|\eta^{{u}^n}_{jh}\|^2_{\mathcal{T}_h}-\|\eta^{{u}^{n-1}}_{jh}\|^2_{\mathcal{T}_h}}{2\Delta t}
	+\frac{\|\eta^{{u}^n}_{jh}-\eta^{{u}^{n-1}}_{jh}\|^2_{\mathcal{T}_h}}{2\Delta t} +\|\sqrt{\bar c^n}\eta^{\bm{q}^{n}}_{jh}\|^2_{\mathcal{T}_h}
	+\|\sqrt{\tau}(\eta^{u^n}_{jh}-\eta^{{\widehat{u}}^n}_{jh})\|^2_{\partial \mathcal{T}_h}\nonumber\\
	&= - ( \overline{\bm \beta}^n\cdot \nabla \eta^{u^n}_{jh}, \eta^{u^n}_{jh})_{\mathcal{T}_h} + \langle \overline{\bm \beta}^n\cdot\bm n, \eta^{{u}^n}_{jh} \eta^{\widehat{u}^n}_{jh}\rangle_{\partial\mathcal{T}_h}+
	((\overline c^n-c^n_j)(\bm q_{jh}^{n-1}-\bm{\Pi}_V^j{\bm q}_{j}^n),\eta^{\bm{q}^{n}}_{jh})_{\mathcal{T}_h}\nonumber\\
	&\quad+
	(\partial_t{u}_{j}^n-\partial^+_t\Pi_W^j u_j^n,\eta^{u^n}_{jh})_{\mathcal{T}_h}+( (\overline{\bm \beta}^n-\bm{\beta}_j^n)\cdot \nabla (u_{jh}^{n-1}
	-\Pi_W^j{u}_{j}^{n}), \eta^{u^n}_{jh})_{\mathcal{T}_h}\nonumber\\
	&\quad-\langle (\overline{\bm \beta}^n-\bm{\beta}_j^n)\cdot\bm n, ({u}_{jh}^{n-1}-\Pi_W^j{u}^n_{j}) \eta^{\widehat{u}^n}_{jh}\rangle_{\partial\mathcal{T}_h}.\label{290}
	\end{align}
	By Green's formula and the fact
	$\langle (\overline{\bm \beta}^n\cdot\bm n)\eta^{\widehat{u}^n}_{jh},  \eta^{\widehat{u}^n}_{jh}\rangle_{\partial\mathcal{T}_h}=0$,
	we have
	\begin{align*}
	( \overline{\bm \beta}^n\cdot \nabla \eta^{u^n}_{jh}, \eta^{u^n}_{jh})_{\mathcal{T}_h}
	-\langle \overline{\bm \beta}^n\cdot\bm n, \eta^{{u}^n}_{jh} \eta^{\widehat{u}^n}_{jh}\rangle_{\partial\mathcal{T}_h}
	\le \frac{1}{2}\|\sqrt{|\overline{\bm \beta}^n\cdot\bm n|}
	(\eta^{u^n}_{jh}-\eta^{\widehat{u}^n}_{jh})\|^2_{\partial\mathcal{T}_h}.
	\end{align*}
	Condition \eqref{tau} and  equality \eqref{290} give
	\begin{align}\label{eror_equa_1_NN}
	\begin{split}
	\hspace{0.2em}&\hspace{-0.2em}\frac{\|\eta^{{u}^n}_{jh}\|^2_{\mathcal{T}_h}-\|\eta^{{u}^{n-1}}_{jh}\|^2_{\mathcal{T}_h}}{2\Delta t}
	+\frac{\|\eta^{{u}^n}_{jh}-\eta^{{u}^{n-1}}_{jh}\|^2_{\mathcal{T}_h}}{2\Delta t} +\|\sqrt{\bar c^n}\eta^{\bm{q}^{n}}_{jh}\|^2_{ \mathcal{T}_h}
	+\frac 1 2\|\sqrt{\tau}(\eta^{u^n}_{jh}-\eta^{{\widehat{u}}^n}_{jh})\|^2_{\partial\mathcal{T}_h}\\
	& \le 
	((\overline c^n-c^n_j)(\bm q_{jh}^{n-1}-\bm{\Pi}_V^j{\bm q}_{j}^n),\eta^{\bm{q}^{n}}_{jh})_{\mathcal{T}_h}+
	(\partial_t{u}_{j}^n-\partial^+_t\Pi_W^j u_j^n,\eta^{u^n}_{jh})_{\mathcal{T}_h}\\
	&\quad +\left[( (\overline{\bm \beta}^n-\bm{\beta}_j^n)\cdot \nabla (u_{jh}^{n-1}
	-\Pi_W^j{u}_{j}^{n}), \eta^{u^n}_{jh})_{\mathcal{T}_h} \right.\\
	&\qquad \left.-\langle (\overline{\bm \beta}^n-\bm{\beta}_j^n)\cdot\bm n, ({u}_{jh}^{n-1}-\Pi_W^j{u}^n_{j}) \eta^{\widehat{u}^n}_{jh}\rangle_{\partial\mathcal{T}_h}\right]\\
	&\quad -(c_j^n  (\bm\Pi_V^j\bm q^n_j-\bm q^n_j),\eta_{jh}^{\bm q^n})_{\mathcal{T}_h}\\
	&= R_1 + R_2 + R_3+R_4.
	\end{split}
	\end{align}
	Next,  we estimate $\{R_i\}_{i=1}^4$.  By the condition \eqref{condition_c}, there exist $0<\alpha <1$ such that 
	\begin{align*}
	R_1&=((\overline c^n-c^n_j)(\eta_{jh}^{\bm q^{n-1}}
	-\Delta t\partial_t^+\bm{\Pi}_V^j\bm q_j^n),\eta^{\bm{q}^{n}}_{jh})_{\mathcal{T}_h}
	\nonumber\\
	&\le \frac{\alpha}{2}\left(\|\sqrt{\bar c^n}\eta^{\bm{q}^{n}}_{jh}\|^2_{\mathcal{T}_h}
	+\|\sqrt{\bar c^{n-1}}\eta^{\bm{q}^{n-1}}_{jh}\|^2_{\mathcal{T}_h}
	\right)+C\Delta t^2\|\partial_t^+\bm{\Pi}_V^j\bm q_j^n\|^2_{\mathcal{T}_h},\nonumber\\
	R_2&=(\partial^+_t({u}_{j}^n-\Pi_W^j u^n_j)
	-\partial^+_tu^n_j+\partial_t u_j^n,\eta^{u^n}_{jh})_{\mathcal{T}_h}\nonumber\\
	&\le C\left( \|\partial^+_t({u}_{j}^n-\Pi_W^ju^n_j)\|^2_{\mathcal{T}_h}
	+\|\partial^+_t u^n_j-\partial_t u_j^n\|^2_{\mathcal{T}_h}
	+\|\eta^{u^n}_{jh}\|^2_{\mathcal{T}_h} \right),\\
	R_4&\le \frac{1-\alpha}{8} \|\sqrt{\bar c^n}\eta_{jh}^{\bm q^n}\|_{\mathcal T_h}^2+ Ch^{2k+2}( |u_j^n|^2_{k+1}+|\bm q_j^n|^2_{k+1}).
	\end{align*}
	If we directly estimate $R_3$, we will obtain only suboptimal convergence rates. Therefore, we need a refined analysis for this term. For simplicity, let $\bm \gamma = \overline{\bm \beta}^n-\bm{\beta}_j^n$.  The following argument is  similar to  the proof of the stability \Cref{Stability};  to make the proof self-contained, we  include these details here. First
	\begin{align*}
	R_3
	&=( \bm \gamma \cdot \nabla (u_{jh}^{n-1}
	-\Pi_W^j{u}_{j}^n), \eta^{u^n}_{jh})_{\mathcal{T}_h} -\langle \bm \gamma \cdot\bm n, (u_{jh}^{n-1}
	-\Pi_W^j{u}_{j}^n) \eta^{\widehat{u}^n}_{jh}\rangle_{\partial\mathcal{T}_h}\\
	&=(  (\bm \gamma  - \bm{\Pi}_0 \bm \gamma )\cdot \nabla (u_{jh}^{n-1}
	-\Pi_W^j{u}_{j}^n), \eta^{u^n}_{jh})_{\mathcal{T}_h} \nonumber\\
	&\quad-\langle (\bm \gamma - \bm{\Pi}_0 \bm \gamma)  \cdot\bm n, (u_{jh}^{n-1}
	-\Pi_W^j{u}_{j}^n) \eta^{\widehat{u}^n}_{jh}\rangle_{\partial\mathcal{T}_h}\\
	&\quad + (\bm{\Pi}_0 \bm \gamma  \cdot \nabla (u_{jh}^{n-1}
	-\Pi_W^j{u}_{j}^n), \eta^{u^n}_{jh})_{\mathcal{T}_h} - \langle \bm{\Pi}_0 \bm \gamma \cdot\bm n, (u_{jh}^{n-1}
	-\Pi_W^j{u}_{j}^n) \eta^{\widehat{u}^n}_{jh}\rangle_{\partial\mathcal{T}_h}.
	\end{align*}
	By the error equation \eqref{error_aa}, we have  
	\begin{align*}
	&(\bm{\Pi}_0 \bm \gamma  \cdot \nabla (u_{jh}^{n-1}
	-\Pi_W^j{u}_{j}^n), \eta^{u^n}_{jh})_{\mathcal{T}_h} - \langle \bm{\Pi}_0 \bm \gamma \cdot\bm n, (u_{jh}^{n-1}
	-\Pi_W^j{u}_{j}^n) \eta^{\widehat{u}^n}_{jh}\rangle_{\partial\mathcal{T}_h} \\
	=& \  (  \nabla \cdot [\bm{\Pi}_0 \bm \gamma  (u_{jh}^{n-1}
	-\Pi_W^j{u}_{j}^n)], \eta^{u^n}_{jh})_{\mathcal{T}_h} - \langle [(\bm{\Pi}_0 \bm \gamma \cdot\bm n) (u_{jh}^{n-1}
	-\Pi_W^j{u}_{j}^n)], \eta^{\widehat{u}^n}_{jh}\rangle_{\partial\mathcal{T}_h}\\
	=& \ 	(\bar c^n \eta_{jh}^{\bm q^n},[\bm{\Pi}_0 \bm \gamma  (u_{jh}^{n-1}
	-\Pi_W^j{u}_{j}^n)])_{\mathcal{T}_h}
	+(c_j^n  (\bm\Pi_V^j\bm q^n_j-\bm q^n_j),[\bm{\Pi}_0 \bm \gamma  (u_{jh}^{n-1}
	-\Pi_W^j{u}_{j}^n)])_{\mathcal{T}_h}\nonumber\\
	& -
	( ( \bar c^n-c_j^n)(\bm{q}^{n-1}_{jh}-
	\bm{\Pi}_V^j\bm q_j^n),[\bm{\Pi}_0 \bm \gamma  (u_{jh}^{n-1}
	-\Pi_W^j{u}_{j}^n)])_{\mathcal{T}_h}.
	\end{align*}
	This gives
	\begin{align*}
	R_3  &=(  (\bm \gamma  - \bm{\Pi}_0 \bm \gamma )\cdot \nabla (u_{jh}^{n-1}
	-\Pi_W^j{u}_{j}^n), \eta^{u^n}_{jh})_{\mathcal{T}_h}\nonumber\\
	&\quad -\langle (\bm \gamma - \bm{\Pi}_0 \bm \gamma)  \cdot\bm n, (u_{jh}^{n-1}
	-\Pi_W^j{u}_{j}^n) \eta^{\widehat{u}^n}_{jh}\rangle_{\partial\mathcal{T}_h}\\
	&\quad + (\bar c^n \eta_{jh}^{\bm q^n},\bm{\Pi}_0 \bm \gamma  (u_{jh}^{n-1}
	-\Pi_W^j{u}_{j}^n))_{\mathcal{T}_h}
	+(c_j^n  (\bm\Pi_V^j\bm q^n_j-\bm q^n_j),[\bm{\Pi}_0 \bm \gamma  (u_{jh}^{n-1}
	-\Pi_W^j{u}_{j}^n)])_{\mathcal{T}_h}
	\\
	&\quad -
	( ( \bar c^n-c_j^n)(\bm{q}^{n-1}_{jh}-
	\bm{\Pi}_V^j\bm q_j^n),\bm{\Pi}_0 \bm \gamma  (u_{jh}^{n-1}
	-\Pi_W^j{u}_{j}^n))_{\mathcal{T}_h}.
	\end{align*}
	Hence, 
	\begin{align*}
	R_3&\le \sum_{K\in \mathcal T_h} \|\bm \gamma - \bm\Pi_0 \bm \gamma\|_{\infty, K} \|\nabla (u_{jh}^{n-1}
	-\Pi_W^j{u}_{j}^n)\|_K \| \eta^{u^n}_{jh}\|_K\\
	&\quad  +  \sum_{K\in \mathcal T_h} \|\bm \gamma - \bm\Pi_0 \bm \gamma\|_{\infty, \partial K} \|u_{jh}^{n-1}
	-\Pi_W^j{u}_{j}^n\|_{\partial K} (\|  \eta^{\widehat{u}^n}_{jh}  - \eta^{{u}^n}_{jh}\|_{\partial K} + \|\eta^{{u}^n}_{jh}\|_{\partial K})\\
	& \quad + \|\bm \Pi_0 \bm \gamma\|_{\infty,\mathcal T_h} \|\bar c^n \eta_{jh}^{\bm q^n}\|_{\mathcal T_h} \|u_{jh}^{n-1}
	-\Pi_W^j{u}_{j}^n\|_{\mathcal T_h}\\
	&\quad + \| (\bar c^n-c_j^n)\bm \Pi_0\bm \gamma\|_{\infty,\mathcal T_h} \| \bm{q}^{n-1}_{jh}-
	\bm{\Pi}_V^j\bm q_j^n\|_{\mathcal T_h}  \|u_{jh}^{n-1}
	-\Pi_W^j{u}_{j}^n\|_{\mathcal T_h}\\
	&\quad + \| c_j^n\bm \Pi_0\bm \gamma\|_{\infty,\mathcal T_h} \| \
	\bm{\Pi}_V^j\bm q_j^n-\bm q_j^n\|_{\mathcal T_h}  \|u_{jh}^{n-1}
	-\Pi_W^j{u}_{j}^n\|_{\mathcal T_h}\\
	& = R_{31} + R_{32} + R_{33} + R_{34}
	+R_{35}.
	\end{align*}
	For $R_{31}$, use the local inverse inequality:
	\begin{align*}
	R_{31}  &\le   C  \sum_{K\in \mathcal T_h} h_K \|\bm \gamma \|_{1,\infty, K} h_K^{-1} \|u_{jh}^{n-1}
	-\Pi_W^ju_j^n\|_K \| \eta^{u^n}_{jh}\|_K\\
	& \le C  \sum_{K\in \mathcal T_h}   \|u_{jh}^{n-1}
	-\Pi_W^ju_j^n\|_K \| \eta^{u^n}_{jh}\|_K\\
	&\le C (\|\eta^{{u}^{n-1}}_{jh}\|_{\mathcal T_h}^2 + \Delta t^2 \| \partial_t^+\Pi_W^ju_j^n\|_{\mathcal T_h}^2 + \|\eta^{{u}^{n}}_{jh}\|_{\mathcal T_h}^2). 
	\end{align*}
	Apply the trace inequality and inverse inequality for the term $R_{32}$ to give
	\begin{align*}
	R_{32}  &\le   C  \sum_{K\in \mathcal T_h} h_K \|\bm \gamma \|_{1,\infty, K} h_K^{-1/2} \|u_{jh}^{n-1}
	-\Pi_W^ju_j^n\|_K (\|  \eta^{\widehat{u}^n}_{jh}  - \eta^{{u}^n}_{jh}\|_{\partial K} +h_K^{-1/2} \|\eta^{{u}^n}_{jh}\|_{K})\\
	& \le C \sum_{K\in \mathcal T_h}  \|u_{jh}^{n-1}
	-\Pi_W^ju_j^n\|_K (\| \eta^{\widehat{u}^n}_{jh}  - \eta^{{u}^n}_{jh}\|_{\partial K} +\|\eta^{{u}^n}_{jh}\|_{K})\\
	&\le C (\|\eta^{{u}^{n-1}}_{jh}\|_{\mathcal T_h}^2 + \Delta t^2 \| \partial_t^+\Pi_W^ju_j^n\|_{\mathcal T_h}^2 + \|\eta^{{u}^{n}}_{jh}\|_{\mathcal T_h}^2) + \frac 1 4  \| \sqrt{\tau}(\eta^{\widehat{u}^n}_{jh}  - \eta^{{u}^n}_{jh})\|_{\partial \mathcal T_h}^2. 
	\end{align*}
	For the terms $R_{33}$, $R_{34}$, $R_{35}$ and $R_4$, use  Young's inequality to obtain
	\begin{align*}
	R_{33}&\le \frac{1-\alpha}{8} \|\sqrt{\bar c^n}\eta_{jh}^{\bm q^n}\|_{\mathcal T_h}^2 +  C (\|\eta^{{u}^{n-1}}_{jh}\|_{\mathcal T_h}^2 + \Delta t^2 \| \partial_t^+\Pi_W^ju_j^n\|_{\mathcal T_h}^2),\\
	R_{34} &\le \frac{1-\alpha}{8} \| \sqrt{\bar c^n} \eta^{{\bm q}^{n-1}}_{jh}\|_{\mathcal T_h}^2 + \frac{\Delta t^2}4 \| \partial_t^+\bm{\Pi}_V^j\bm q_{j}^n\|_{\mathcal T_h}^2 \\
	&\quad  +C(\|\eta^{{u}^{n-1}}_{jh}\|_{\mathcal T_h}^2  + \Delta t^2 \| \partial_t^+\Pi_W^ju_j^n\|_{\mathcal T_h}^2),\\
	R_{35} &\le Ch^{2k+2}( |u_j^n|^2_{k+1}+|\bm q_j^n|^2_{k+1}) +C(\|\eta^{{u}^{n-1}}_{jh}\|_{\mathcal T_h}^2  + \Delta t^2 \| \partial_t^+\Pi_W^ju_j^n\|_{\mathcal T_h}^2).
	\end{align*}
	
	We add \eqref{eror_equa_1_NN} from $n=1$ to $n=N$, use the above inequalities  to get
	\begin{align}\label{error_1_NN}
	\begin{split}
	\hspace{0.2em}&\hspace{-0.2em}\max_{1\le n\le N}\|\eta^{{u}^n}_{jh}\|^2_{\mathcal{T}_h} +\Delta t\sum_{n=1}^N\|\sqrt{\bar c^n}\eta^{\bm{q}^{n}}_{jh}\|^2_{\mathcal{T}_h}\\
	&\le C\Delta t\sum_{n=1}^N \|\eta^{u^n}_{jh}\|^2_{\mathcal{T}_h} +C\sum_{n=1}^N
	(
	\Delta t^3\|\partial_t^+\Pi_W^ju_j^n\|^2_{\mathcal{T}_h}
	+\Delta t^3\|\partial_t^+\bm{\Pi}_V^j\bm q_j^n\|^2_{\mathcal{T}_h}
	)\\
	&\quad+C\sum_{n=1}^N
	(
	\Delta t\|\partial^+_t(u_j^n-\Pi_W^j u_j^n)\|^2_{\mathcal{T}_h}
	+\Delta t\|\partial^+_t u_j^n-\partial_t u_j^n\|^2_{\mathcal{T}_h})\\
	&\quad +  Ch^{2k+2} \sum_{n=1}^N \Delta t ( |u_j^n|^2_{k+1}+|\bm q_j^n|^2_{k+1}) + \|\eta^{u^0}_{jh}\|_{\mathcal T_h}^2+ \|\eta^{{\bm q}^0}_{jh}\|_{\mathcal T_h}^2.
	\end{split}
	\end{align}
	Now we move to bound the terms on the right side of the above inequality as follows,
	\begin{align*}
	\Delta t^3\sum_{n=1}^N\|\partial_t^+\Pi_W^ju_j^n\|^2_{\mathcal{T}_h}
	&=\Delta t\sum_{n=1}^N\int_{\Omega}\left[\int_{t^{n-1}}^{t^n}\partial_t\Pi_W^ju_j^n dt\right]^2 \\
	&\le C \Delta t^2 \|\partial_t\Pi_W^ju_j^n\|^2_{L^2(0,T;L^2(\Omega))},\\
	\Delta t^3\sum_{n=1}^N\|\partial_t^+\bm{\Pi}_V^j\bm q_j^n\|^2_{\mathcal{T}_h}
	&=\Delta t\sum_{n=1}^N\int_{\Omega}\left[\int_{t^{n-1}}^{t^n}\partial_t\bm{\Pi}_V^j\bm q_j^n dt\right]^2  \\
	&\le  C\Delta t^2 \|\partial_t\bm{\Pi}_V^j\bm q_j^n\|^2_{L^2(0,T;L^2(\Omega))},
	\end{align*}
	and
	\begin{align*}
	\Delta t\sum_{n=1}^N\|\partial^+_t(u_j^n-\Pi_W^j u_j^n)\|^2_{\mathcal{T}_h}
	&=\Delta t^{-1}\sum_{n=1}^N\int_{\Omega}\left[\int_{t^{n-1}}^{t^n}\partial_t(u_j^n-\Pi_W^j u_jdt)\right]^2 \\
	&\le C \|\partial_t(u_j^n-\Pi_W^j u_j)\|^2_{L^2(0,T;L^2(\Omega))},\\
	\Delta t\sum_{n=1}^N\|\partial^+_t u_j^n-\partial_t u_j^n\|^2_{\mathcal{T}_h}
	&= \Delta {t}^{-1}\sum_{n=1}^N\int_{\Omega}\left[\int_{t^{n-1}}^{t^n}(t-t^{n-1})\partial_{tt} u_jdt\right]^2\nonumber\\
	&\le C\Delta t^2 \|\partial_{tt} u_j\|^2_{L^2(0,T;L^2(\Omega))}.
	\end{align*}
	Gronwall's inequality and the estimates above applied to \eqref{error_1_NN}
	give the result.
\end{proof}

From \Cref{energy_quu}  and the estimate in \Cref{pro_error} we complete  the proof of \eqref{errror_qu} in \Cref{main_theorem}.

\subsection{Proof of \eqref{eror_ustar} in \Cref{main_theorem}}
To prove \eqref{eror_ustar} in \Cref{main_theorem}, we follow a similar strategy taken by Chen, Cockburn, Singler and Zhang \cite{ChenCockburnSinglerZhang1} and  introduce an HDG elliptic projection in \Cref{HDGellipticprojection}. We first bound the error between the solutions of the  HDG elliptic projection and the exact solution of the system \eqref{concection_pde}. Then we bound the error between the solutions of the HDG elliptic projection and  the Ensemble HDG problem \eqref{or}. A simple application of the triangle inequality then gives a bound on the error between the solutions of the Ensemble HDG problem and the system \eqref{concection_pde}. We note that the coefficients of the PDEs  are independent of time  throughout this section. Hence, we drop the superscript $n$ from  $c_j^n, \bm{\beta}_j^n$ and the ensemble means $\overline{c }^n,\overline{\bm \beta}^n$.
\subsubsection{HDG elliptic projection}
\label{HDGellipticprojection}
For any $t\in[0,T]$, let $(\overline{\bm q}_{jh},\overline{u}_{jh},\widehat{\overline u}_{jh})\in \bm V_h\times W_h\times M_h$ be the solutions of the following steady state problems
\begin{subequations}\label{projection}
	\begin{align}
	(c_j \overline{\bm{q}}_{jh},\bm{r}_h)_{\mathcal{T}_h}-(\overline u _{jh},\nabla\cdot \bm{r}_h)_{\mathcal{T}_h}+\langle\widehat{\overline u} _{jh},\bm{r}_h\cdot \bm n \rangle_{\partial{\mathcal{T}_h}} =
	-\left\langle
	g_j,\bm{r}_h\cdot \bm n \right\rangle_{{\mathcal{E}^{\partial}_h}}, \label{full_discretion_ensemble_steady1_a}\\
	(\nabla\cdot\overline{\bm{q}} _{jh}, v_h)_{\mathcal{T}_h}-\langle \overline{\bm{q}} _{jh}\cdot \bm{n},\widehat{v}_h\rangle_{\partial{\mathcal{T}_h}}
	+\langle \tau (\overline u _{jh}-{\widehat{\overline{u}}} _{jh}),v_h-\widehat{v}_h \rangle_{\partial\mathcal{T}_h}\nonumber\\
	+({\bm \beta}_j\cdot \nabla \overline{u}_{jh}, v_h)_{\mathcal{T}_h}-\left\langle{\bm \beta}_j\cdot\bm n, \overline{u}_{jh} \widehat{v}_h\right\rangle_{\partial\mathcal{T}_h}
	= (f _j-\Pi_W^j\partial_t u_j ,v_h)_{\mathcal{T}_h} + \left\langle \tau g_j,v_h\right\rangle_{\mathcal{E}^{\partial}_h},\label{full_discretion_ensemble_steady1_b}
	\end{align}
\end{subequations}
for all $(\bm r_h,v_h,\widehat{v}_h)\in \bm V_h\times W_h\times M_h$ and $j = 1,2,\cdots ,J$.

The proofs of the following estimates are given in \Cref{app}.
\begin{theorem}\label{error_dual}
	For any $t\in[0,T]$ and for all $j=1,2,\cdots, J$, we have
	\begin{subequations}
		\begin{align}
		\|\bm{\Pi}_{V}^j\bm q_j- \overline{\bm q}_{jh}\|_{\mathcal{T}_h} &\le C \mathcal A_j, \label{p1}\\
		\|\Pi_{W}^j {u}_j-\overline u_{jh}\|_{\mathcal{T}_h} &\le  Ch^{\min\{k,1\}}\mathcal A_j, \label{p2}\\
		\|\partial_t (\bm{\Pi}_{V}^j\bm q_j- \overline{\bm q}_{jh})\|_{\mathcal{T}_h} &\le C \mathcal B_j,\label{error_dual_bb}\\
		\|\partial_t(\Pi_W^j u_j - \overline{u}_{jh})\|_{\mathcal T_h}  &\le  Ch^{\min\{k,1\}} \mathcal B_j,\label{error_dual_c}\\
		\|\partial_{tt}(\Pi_W^j u_j - \overline{u}_{jh})\|_{\mathcal T_h}  &\le  Ch^{\min\{k,1\}} \mathcal C_j,\label{error_dual_d}
		\end{align}
	\end{subequations}
	where
	\begin{align*}
	\mathcal A_j &= \|u_j - \Pi_W^j u_j\|_{\mathcal T_h}+\|\bm q_j - \bm\Pi_V^j \bm q_j\|_{\mathcal T_h} +\|\partial_tu_j - \Pi_W^j \partial_tu_j\|_{\mathcal T_h},\\
	\mathcal B_j & = \|\partial_{t}u_j - \Pi_W^j \partial_{t}u_j\|_{\mathcal T_h}+
	\|\partial_t\bm q_j - \bm\Pi_V^j\partial_t\bm q_j\|_{\mathcal T_h} +\|\partial_{tt}u_j - \Pi_W^j \partial_{tt}u_j\|_{\mathcal T_h},\\
	\mathcal C_j &=\|\partial_{tt}u_j - \Pi_W^j \partial_{tt}u_j\|_{\mathcal T_h}+
	\| \partial_{tt}\bm q_j - \bm\Pi_V^j \partial_{tt} \bm q_j\|_{\mathcal T_h} +\|\partial_{ttt}u_j - \Pi_W^j \partial_{ttt}u_j\|_{\mathcal T_h}.
	\end{align*}
\end{theorem}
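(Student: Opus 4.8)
The plan is to relate the two projections through their defining equations and then combine an energy estimate with a duality argument. Set $\bm\delta^{\bm q}_j=\bm\Pi_V^j\bm q_j-\overline{\bm q}_{jh}$, $\delta^u_j=\Pi_W^j u_j-\overline u_{jh}$, and $\delta^{\widehat u}_j=P_M u_j-\widehat{\overline u}_{jh}$. Subtracting the elliptic projection system \eqref{projection} from the projection identities of \Cref{lemma_error} (with the superscript $n$ dropped, as the coefficients are time independent here), I would first check that the boundary data cancels: since $\bm r_h\cdot\bm n$ and $v_h$ restrict to polynomials of degree $k$ on each boundary face and $u_j=g_j$ on $\partial\Omega$, the terms $-\langle g_j,\bm r_h\cdot\bm n\rangle_{\mathcal E_h^\partial}$ and $\langle\tau g_j,v_h\rangle_{\mathcal E_h^\partial}$ of \eqref{projection} are exactly matched by the boundary parts of $\langle P_M u_j,\bm r_h\cdot\bm n\rangle_{\partial\mathcal T_h}$ and of the stabilization, using the definition \eqref{L2_edge} of $P_M$. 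What remains is a homogeneous HDG system for $(\bm\delta^{\bm q}_j,\delta^u_j,\delta^{\widehat u}_j)$ whose only forcing is the pair of consistency defects $\big(c_j(\bm\Pi_V^j\bm q_j-\bm q_j),\bm r_h\big)_{\mathcal T_h}$ and $\big(\Pi_W^j\partial_t u_j-\partial_t u_j,v_h\big)_{\mathcal T_h}$, both measured by $\mathcal A_j$.

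For the energy estimate I would test these error equations with $(\bm r_h,v_h,\widehat v_h)=(\bm\delta^{\bm q}_j,\delta^u_j,\delta^{\widehat u}_j)$ and add them. The volume terms $(\delta^u_j,\nabla\cdot\bm\delta^{\bm q}_j)_{\mathcal T_h}$ and the numerical-trace terms cancel, and the convection contribution reduces, via $\nabla\cdot\bm\beta_j=0$ and $\langle\bm\beta_j\cdot\bm n,(\delta^{\widehat u}_j)^2\rangle_{\partial\mathcal T_h}=0$, to $\tfrac12\langle\bm\beta_j\cdot\bm n,(\delta^u_j-\delta^{\widehat u}_j)^2\rangle_{\partial\mathcal T_h}$, exactly as in the stability analysis of \Cref{Stability}. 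Condition \eqref{tau} then renders the left-hand side $\|\sqrt{c_j}\,\bm\delta^{\bm q}_j\|_{\mathcal T_h}^2+\langle(\tau+\tfrac12\bm\beta_j\cdot\bm n)(\delta^u_j-\delta^{\widehat u}_j)^2\rangle_{\partial\mathcal T_h}$ nonnegative. Bounding the flux defect by Young's inequality and absorbing, I arrive at $\|\bm\delta^{\bm q}_j\|_{\mathcal T_h}^2\le C\mathcal A_j^2+C\mathcal A_j\|\delta^u_j\|_{\mathcal T_h}$. This does not yet close \eqref{p1}, because the steady problem carries no coercivity in $\delta^u_j$; the residual term $(\Pi_W^j\partial_t u_j-\partial_t u_j,\delta^u_j)_{\mathcal T_h}$ must still be controlled through a separate bound on $\|\delta^u_j\|_{\mathcal T_h}$.

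That bound is obtained by an Aubin--Nitsche duality argument, and this is the step I expect to be the main obstacle. I would introduce the adjoint problem $c_j\bm\Phi+\nabla\Psi=0$, $\nabla\cdot\bm\Phi-\bm\beta_j\cdot\nabla\Psi=\delta^u_j$ in $\Omega$ with $\Psi=0$ on $\partial\Omega$, and invoke the elliptic regularity inequality \eqref{Dual_PDE1} to get $\|\Psi\|_{2}+\|\bm\Phi\|_{1}\le C\|\delta^u_j\|_{\mathcal T_h}$. Testing the error equations with the HDG projections $(\bm\Pi_V^j\bm\Phi,\Pi_W^j\Psi,P_M\Psi)$ and using the adjoint identities to reproduce $\|\delta^u_j\|_{\mathcal T_h}^2$, the error reduces to products of the already-controlled energy quantities with projection errors of $(\bm\Phi,\Psi)$, the latter contributing the factor $h^{\min\{k,1\}}$ through \Cref{lemmainter} and the regularity bound. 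The delicate point is the convection term $\bm\beta_j\cdot\nabla\Psi$: it must be paired against the flux error $\bm\delta^{\bm q}_j$ by means of the first error equation --- precisely the rewriting used for $R_3$ in the proof of \Cref{energy_quu} --- rather than left acting on $\nabla\delta^u_j$, which is what avoids a lost power of $h$. This yields $\|\delta^u_j\|_{\mathcal T_h}\le Ch^{\min\{k,1\}}(\|\bm\delta^{\bm q}_j\|_{\mathcal T_h}+\mathcal A_j)$; inserting it into the energy bound of the previous paragraph and absorbing closes \eqref{p1}, and then feeding \eqref{p1} back gives \eqref{p2}.

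Finally, the time-derivative estimates \eqref{error_dual_bb}--\eqref{error_dual_d} follow because $c_j$, $\bm\beta_j$, and $\tau$ are time independent: differentiating the defining relations \eqref{HDG_projection_operator} and \eqref{projection} in $t$ shows that $\partial_t\bm\delta^{\bm q}_j$ and $\partial_t\delta^u_j$ (and, after a second differentiation, $\partial_{tt}\delta^u_j$) satisfy error equations of exactly the same form, with the consistency defects now assembled from $\partial_t u_j,\partial_t\bm q_j,\partial_{tt}u_j$ (respectively $\partial_{tt}u_j,\partial_{tt}\bm q_j,\partial_{ttt}u_j$), that is, measured by $\mathcal B_j$ and $\mathcal C_j$. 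Re-running the coupled energy--duality argument verbatim then produces the flux estimate \eqref{error_dual_bb} without an $h$ gain and the scalar estimates \eqref{error_dual_c} and \eqref{error_dual_d} with the same $h^{\min\{k,1\}}$ factor.
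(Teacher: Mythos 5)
Your overall architecture is the paper's: error equations obtained by subtracting \eqref{projection} from the projection identities, an energy argument giving $\|\sqrt{c_j}\,\bm\delta^{\bm q}_j\|_{\mathcal T_h}\le C\bigl(\mathcal A_j+\|\delta^u_j\|_{\mathcal T_h}\bigr)$, an Aubin--Nitsche argument based on \eqref{Dual_PDE1} and \eqref{regularity_PDE}, absorption for small $h$, and time differentiation (legitimate since $c_j,\bm\beta_j,\tau$ are time independent) for \eqref{error_dual_bb}--\eqref{error_dual_d}. The gap is in the one step you yourself flag as delicate, and it is exactly the step that produces the factor $h^{\min\{k,1\}}$ in \eqref{p2}. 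You propose to treat the dual convection term by the rewriting used for $R_3$ in \Cref{energy_quu}: split off $\bm\Pi_0\bm\beta_j$ and test the first error equation with $\bm r_h=\bm\Pi_0\bm\beta_j\,\Pi_W^j\Psi_j$, so that the convection term is traded for $(c_j\,\bm\delta^{\bm q}_j,\bm\Pi_0\bm\beta_j\,\Pi_W^j\Psi_j)_{\mathcal T_h}$ plus defects. In \Cref{energy_quu} that trick works because the companion factor there, $u^{n-1}_{jh}-\Pi_W^j u^n_j$, is itself an error quantity; here the companion factor is $\Pi_W^j\Psi_j$, which is \emph{not} small --- by elliptic regularity it is only of size $\|\delta^u_j\|_{\mathcal T_h}$. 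The traded term is therefore of order $\|\bm\delta^{\bm q}_j\|_{\mathcal T_h}\,\|\delta^u_j\|_{\mathcal T_h}$, and inserting the energy bound $\|\bm\delta^{\bm q}_j\|\le C(\mathcal A_j+\|\delta^u_j\|)$ yields $\|\delta^u_j\|^2\le C\mathcal A_j\|\delta^u_j\|+C\|\delta^u_j\|^2+\cdots$, whose last term cannot be absorbed (its constant is not small); even discarding it, the best conclusion is $\|\delta^u_j\|\le C\mathcal A_j$, i.e.\ no gain of $h^{\min\{k,1\}}$, hence neither \eqref{p2} nor the superconvergence \eqref{eror_ustar} follows by your route.

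What the paper's proof does at this point is a cancellation, not a substitution. It tests the dual projection identities with $(\bm\delta^{\bm q}_j,\delta^u_j,\delta^{\widehat u}_j)$ and the error equations \eqref{44a}--\eqref{44b} with $(\bm\Pi_V^j\bm\Phi_j,\Pi_W^j\Psi_j,P_M\Psi_j)$; using \eqref{44a}, then the first dual projection identity, then Green's formula with $\nabla\cdot\bm\beta_j=0$, and then \eqref{44b}, the two convection terms $\pm(\bm\beta_j\cdot\nabla\Pi_W^j\Psi_j,\delta^u_j)_{\mathcal T_h}$ cancel, as do the paired terms $(c_j\bm\delta^{\bm q}_j,\bm\Pi_V^j\bm\Phi_j)_{\mathcal T_h}$ and the two stabilization terms. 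Every surviving term then carries a projection-error factor --- $\bm\Pi_V^j\bm\Phi_j-\bm\Phi_j$, $P_M\Psi_j-\Pi_W^j\Psi_j$, $\Pi_W^j u_j-u_j$, or $\partial_t u_j-\Pi_W^j\partial_t u_j$ (the terms $R_1,\dots,R_5$ in the appendix) --- and the $\bm\Pi_0\bm\beta_j$ orthogonality enters only to \emph{estimate} one of these residuals ($R_4$), not to create a pairing with the flux error. It is these projection-error factors, combined with \Cref{lemmainter} and \eqref{regularity_PDE}, that deliver the bound $\|\delta^u_j\|_{\mathcal T_h}\le Ch^{\min\{k,1\}}\bigl(\mathcal A_j+\|\delta^u_j\|_{\mathcal T_h}\bigr)$. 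Once this step is repaired, your remaining steps (absorbing, feeding the duality bound back into the energy bound to get \eqref{p1} and \eqref{p2}, and rerunning the argument on the time-differentiated equations) go through as you describe.
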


Note that \Cref{error_dual} bounds the error between  the  HDG elliptic projection of the solutions  and the exact solutions of the system \eqref{concection_pde}. In the next three steps, we are going to bound the error between the HDG elliptic projection of the ensemble solutions and the solutions of  the Ensemble HDG problem \eqref{or}.

\subsubsection{The equations of the projection of the errors}
\begin{lemma} \label{error_lemma}
	For $e^{u^n}_{jh}=u_{jh}^n-{\overline{u}}_{jh}^n$, $e^{\bm q^n}_{jh}=\bm q_{jh}^n-{\overline{\bm q}}_{jh}^n$  and $ e^{\widehat{u}^n}_{jh}=\widehat{u}_{jh}^n-{{\widehat{\overline{u}}}}_{jh}^n$, for all $j = 1,2,\cdots,J$, we have  the following  error equations
	\begin{subequations}\label{error1}
		\begin{align}
		(\bar ce_{jh}^{\bm q^n},\bm{r}_h)_{\mathcal{T}_h}
		-(e^{u^n}_{jh},\nabla\cdot \bm{r}_h)_{\mathcal{T}_h}+\langle e^{\widehat{u}^n}_{jh},\bm{r}_h\cdot \bm n \rangle_{\partial{\mathcal{T}_h}} =
		( ( \bar c-c_j)(\bm{q}^{n-1}_{jh}-
		\overline{\bm{q}}^{n}_{jh}),\bm{r}_h)_{\mathcal{T}_h},
		\label{error_a}
		\end{align}
		and 
		\begin{align}
		&\quad(\partial^+_te^{u^n}_{jh},v_h)_{\mathcal T_h}+(\nabla\cdot e_{jh}^{\bm q^n}, v_h)_{\mathcal{T}_h}-\langle e^{\bm{q}^n}_{jh}\cdot \bm{n},\widehat{v}_h\rangle_{\partial{\mathcal{T}_h}}
		+( \overline{\bm \beta}\cdot \nabla e^{u^n}_{jh}, v_h)_{\mathcal{T}_h} \nonumber\\
		&\quad-\langle \overline{\bm \beta}\cdot\bm n, e^{{u}^n}_{jh} \widehat{v}_h\rangle_{\partial\mathcal{T}_h}+
		\langle \tau ( e^{u^n}_{jh}-e^{\widehat{u}}_{jh}),v_h-\widehat{v}_h \rangle_{\partial\mathcal{T}_h} -	(\partial^+_t\overline{u}_{jh}^n-\partial_t\Pi_W^j u_j^n, v_h)_{\mathcal{T}_h}
		\nonumber\\
		&=( (\overline{\bm \beta}-\bm{\beta}_j)\cdot \nabla (u_{jh}^{n-1}
		-\overline{u}_{jh}^{n}), v_h)_{\mathcal{T}_h} -\langle (\overline{\bm \beta}_j-\bm{\beta}_j)\cdot\bm n, ({u}_{jh}^{n-1}-\overline{u}^n_{jh}) \widehat{v}_h\rangle_{\partial\mathcal{T}_h},\label{full_discretion_ensemble_b}
		\end{align}
	\end{subequations}
	for all $(\bm r_h,v_h,\widehat{v}_h)\in \bm V_h\times W_h\times M_h$ and $n=1,2,\cdots,N$.
\end{lemma}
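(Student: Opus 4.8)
The plan is to obtain both identities by a careful term-by-term subtraction of the HDG elliptic projection system \eqref{projection}, evaluated at the discrete time $t_n$, from the Ensemble HDG system \eqref{or}. The only genuine bookkeeping arises because the two systems are written with different coefficients: \eqref{or} uses the ensemble means $\bar c$ and $\overline{\bm\beta}$, whereas \eqref{projection} uses the individual $c_j$ and $\bm\beta_j$. The strategy is therefore to add and subtract the ensemble-mean coefficients wherever needed, so that every error quantity $e_{jh}^{\bm q^n},e_{jh}^{u^n},e_{jh}^{\widehat u^n}$ is absorbed into the operator built from $\bar c$ and $\overline{\bm\beta}$, while all coefficient mismatches $\bar c-c_j$ and $\overline{\bm\beta}-\bm\beta_j$ are collected on the right-hand side.

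For the first identity \eqref{error_a}, I would first rewrite the leading term of \eqref{full_discretion_ensemble_steady1_a} at $t_n$ as $(c_j\overline{\bm q}_{jh}^n,\bm r_h)_{\mathcal T_h}=(\bar c\,\overline{\bm q}_{jh}^n,\bm r_h)_{\mathcal T_h}-((\bar c-c_j)\overline{\bm q}_{jh}^n,\bm r_h)_{\mathcal T_h}$ and move the discrepancy term to the right. Subtracting the resulting equation from \eqref{full_discretion_ensemble_a} cancels the Dirichlet data $-\langle g_j^n,\bm r_h\cdot\bm n\rangle_{\mathcal E_h^\partial}$, collapses the volume and trace contributions into $(\bar c\,e_{jh}^{\bm q^n},\bm r_h)_{\mathcal T_h}-(e_{jh}^{u^n},\nabla\cdot\bm r_h)_{\mathcal T_h}+\langle e_{jh}^{\widehat u^n},\bm r_h\cdot\bm n\rangle_{\partial\mathcal T_h}$, and merges the two mismatch contributions into $((\bar c-c_j)(\bm q_{jh}^{n-1}-\overline{\bm q}_{jh}^n),\bm r_h)_{\mathcal T_h}$, which is exactly \eqref{error_a}.

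The second identity \eqref{full_discretion_ensemble_b} is where the real work lies, and I would organize the subtraction of \eqref{full_discretion_ensemble_steady1_b} from \eqref{full_discretion_ensemble_c} around three manipulations. (i) \emph{Time derivative:} \eqref{full_discretion_ensemble_c} carries $(\partial_t^+u_{jh}^n,v_h)_{\mathcal T_h}$ and the source $(f_j^n,v_h)_{\mathcal T_h}$, whereas the steady equation \eqref{full_discretion_ensemble_steady1_b} carries no time derivative and the source $(f_j-\Pi_W^j\partial_tu_j,v_h)_{\mathcal T_h}$, so subtraction cancels $f_j$ and leaves $(\Pi_W^j\partial_tu_j^n,v_h)_{\mathcal T_h}$; splitting $\partial_t^+u_{jh}^n=\partial_t^+e_{jh}^{u^n}+\partial_t^+\overline u_{jh}^n$ and using that the coefficients are time-independent here, so that $\Pi_W^j\partial_tu_j^n=\partial_t\Pi_W^ju_j^n$, these combine into the desired $\partial_t^+e_{jh}^{u^n}$ together with the consistency term $-(\partial_t^+\overline u_{jh}^n-\partial_t\Pi_W^ju_j^n,v_h)_{\mathcal T_h}$ on the left. (ii) \emph{Convection:} for the volume terms I split $(\overline{\bm\beta}\cdot\nabla u_{jh}^n,v_h)_{\mathcal T_h}-(\bm\beta_j\cdot\nabla\overline u_{jh}^n,v_h)_{\mathcal T_h}$ by inserting $\pm\,\overline{\bm\beta}\cdot\nabla\overline u_{jh}^n$, yielding the error term $(\overline{\bm\beta}\cdot\nabla e_{jh}^{u^n},v_h)_{\mathcal T_h}$ plus the mismatch $((\overline{\bm\beta}-\bm\beta_j)\cdot\nabla\overline u_{jh}^n,v_h)_{\mathcal T_h}$, and the analogous insertion of $\pm\,(\overline{\bm\beta}\cdot\bm n)\,\overline u_{jh}^n$ handles the trace terms. (iii) \emph{Flux and stabilization:} the $\nabla\cdot\bm q$, $\bm q\cdot\bm n$, and $\tau$ terms subtract directly into their $e$-counterparts. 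Finally I would move the two convection mismatches to the right and merge them with the $u_{jh}^{n-1}$ convection terms already present in \eqref{full_discretion_ensemble_c}, combining $u_{jh}^{n-1}$ with $-\overline u_{jh}^n$ into $u_{jh}^{n-1}-\overline u_{jh}^n$ to reach \eqref{full_discretion_ensemble_b}.

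The computation carries no analytic difficulty; the step requiring the most care is the treatment of the time derivative, namely recognizing that the projection equation contributes no $\partial_t^+$ of its own and that the commutation $\Pi_W^j\partial_tu_j=\partial_t\Pi_W^ju_j$ (valid precisely because the coefficients, hence the projection operator, are time-independent in this section) is what produces the mixed object $\partial_t^+\overline u_{jh}^n-\partial_t\Pi_W^ju_j^n$. Keeping strict track of which terms carry the ensemble mean versus the individual coefficient is the only other place a sign or indexing error could creep in.
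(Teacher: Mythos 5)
Your proposal takes exactly the route the paper itself takes: the paper's entire proof is the single remark that the lemma ``follows immediately by simply subtracting \Cref{projection} from \Cref{or}'', and your term-by-term subtraction --- inserting $\pm\bar c\,\overline{\bm q}_{jh}^n$ and $\pm\overline{\bm\beta}\cdot\nabla\overline u_{jh}^n$ so that the error quantities are absorbed into the ensemble-mean operator while the mismatches $\bar c-c_j$ and $\overline{\bm\beta}-\bm\beta_j$ collect on the right --- is precisely that computation written out. Your derivation of \eqref{error_a} and of the convection and stabilization terms in \eqref{full_discretion_ensemble_b} is correct.

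One point must be flagged, because it is the one place where your asserted conclusion does not follow from the algebra you describe. In your step (i), subtracting \eqref{full_discretion_ensemble_steady1_b} from \eqref{full_discretion_ensemble_c} leaves $(\partial_t^+u_{jh}^n,v_h)_{\mathcal T_h}$ on the left and $(\Pi_W^j\partial_t u_j^n,v_h)_{\mathcal T_h}$ on the right; after splitting $\partial_t^+u_{jh}^n=\partial_t^+e_{jh}^{u^n}+\partial_t^+\overline u_{jh}^n$ and moving $(\Pi_W^j\partial_t u_j^n,v_h)_{\mathcal T_h}$ to the left, the consistency term enters the left-hand side as $+(\partial_t^+\overline u_{jh}^n-\partial_t\Pi_W^j u_j^n,v_h)_{\mathcal T_h}$, with a plus sign, not the minus sign you state. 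The minus sign is a typo in the lemma as printed (it propagates into \eqref{29} as well, where the term appears with a plus on the right-hand side), and it is harmless downstream: in the proof of \Cref{energy_qu} the corresponding term $T_2$ is bounded in absolute value via Cauchy--Schwarz, so the final estimate is unchanged. Still, the statement as literally written is not what the subtraction produces; a fully careful write-up would either derive the plus sign and note the discrepancy with the printed statement, or carry the term on the right-hand side as $-(\partial_t^+\overline u_{jh}^n-\partial_t\Pi_W^j u_j^n,v_h)_{\mathcal T_h}$. Your text instead asserts that the bookkeeping ``combines into'' the printed sign, which it does not; you reproduced the paper's typo rather than catching it.
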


The proof of \Cref{error_lemma} follows immediately by simply subtracting \Cref{projection} from \Cref{or}.

\subsubsection{Energy argument}
\begin{lemma}\label{energy_qu}
	If condition \eqref{condition_c} and the elliptic regularity inequality \eqref{Dual_PDE1} holds, then we have the following error estimate:
	\begin{equation}
	\begin{split}
	\max_{1\le n\le N}\|e^{{u}^n}_{jh}\|_{\mathcal{T}_h}
	\le C\left(h^{k+1+\min\{k,1\}}+\Delta t\right).
	\end{split}
	\end{equation}
\end{lemma}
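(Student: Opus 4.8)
The plan is to prove this by an Aubin--Nitsche duality argument applied to the error equations \eqref{error1}, because a direct triangle inequality cannot reach the stated rate: splitting $e^{u^n}_{jh}=\eta^{u^n}_{jh}+(\Pi_W^ju^n_j-\overline u^n_{jh})$, the second term is $O(h^{k+1+\min\{k,1\}})$ by \eqref{p2}, while \Cref{energy_quu} only gives $\|\eta^{u^n}_{jh}\|_{\mathcal T_h}=O(h^{k+1}+\Delta t)$. Hence the extra factor $h^{\min\{k,1\}}$ must be extracted from the structure of \eqref{error1} itself rather than from projection estimates.

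First I would fix $n$ and introduce the adjoint problem, posed with the \emph{ensemble} coefficients so as to match \eqref{error_a}--\eqref{full_discretion_ensemble_b}: find $(\bm\Phi,\Psi)$ with $\bar c\,\bm\Phi+\nabla\Psi=0$ in $\Omega$, $-\nabla\cdot\bm\Phi-\overline{\bm\beta}\cdot\nabla\Psi=e^{u^n}_{jh}$ in $\Omega$, and $\Psi=0$ on $\partial\Omega$ (here $\nabla\cdot\overline{\bm\beta}=0$ is used). The elliptic regularity inequality \eqref{Dual_PDE1} then gives $\|\Psi\|_2+\|\bm\Phi\|_1\le C\|e^{u^n}_{jh}\|_{\mathcal T_h}$. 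Next I would take the HDG projection $(\bm\Pi_V^j\bm\Phi,\Pi_W^j\Psi)$ of this dual solution, use $(\bm r_h,v_h,\widehat v_h)=(\bm\Pi_V^j\bm\Phi,\Pi_W^j\Psi,P_M\Psi)$ as test functions in \eqref{error1}, and repeat for the adjoint operator the projection manipulation of \Cref{lemma_error}, exploiting the defining identities \eqref{HDG_projection_operator}. The principal bilinear terms cancel by adjoint consistency, leaving an identity of the form $\|e^{u^n}_{jh}\|^2_{\mathcal T_h}=(\partial_t^+e^{u^n}_{jh},\Pi_W^j\Psi)_{\mathcal T_h}+\mathcal R_n$, where $\mathcal R_n$ collects the residuals of \eqref{error1} paired against the dual solution and its projection errors.

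The spatial gain comes from $\mathcal R_n$. The flux residual $(\bar c-c_j)(\bm q^{n-1}_{jh}-\overline{\bm q}^n_{jh})$ is only $O(h^{k+1}+\Delta t)$ --- writing $\bm q^{n-1}_{jh}-\overline{\bm q}^n_{jh}=e^{\bm q^{n-1}}_{jh}-\Delta t\,\partial_t^+\overline{\bm q}^n_{jh}$ and using \Cref{energy_quu} with \eqref{p1} --- but through \eqref{HDG_projection_operator} it is paired against the dual \emph{projection error} $\bm\Phi-\bm\Pi_V^j\bm\Phi$, which by \Cref{pro_error} together with $\bm\Phi\in H^1$, $\Psi\in H^2$ is $O(h^{\min\{k,1\}})\|e^{u^n}_{jh}\|_{\mathcal T_h}$; this produces the additional order. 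The temporal consistency residual splits as $\partial_t^+(\Pi_W^ju^n_j-\overline u^n_{jh})+(\partial_t^+-\partial_t)\Pi_W^ju^n_j$, the first piece being superconvergent by the difference-quotient analogue of \eqref{error_dual_c} and the second being $O(\Delta t)$; both may pair with the full dual and contribute $h^{k+1+\min\{k,1\}}+\Delta t$. The convection-mismatch residual $(\overline{\bm\beta}-\bm\beta_j)\cdot\nabla(u^{n-1}_{jh}-\overline u^n_{jh})$, together with its trace term, is treated \emph{verbatim} as $R_3$ in the proof of \Cref{energy_quu}: inserting $\bm\Pi_0(\overline{\bm\beta}-\bm\beta_j)$, moving the derivative off $u^{n-1}_{jh}-\overline u^n_{jh}$ by the inverse and trace inequalities, and eliminating the resulting volume term via \eqref{error_a}, which leaves factors of $\|e^{u^{n-1}}_{jh}\|_{\mathcal T_h}$ and $\|e^{\bm q^n}_{jh}\|_{\mathcal T_h}$.

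I expect the main obstacle to be the time-derivative term $(\partial_t^+e^{u^n}_{jh},\Pi_W^j\Psi)_{\mathcal T_h}$, which is new relative to the stationary estimates of \Cref{error_dual}: it is not removed by adjoint consistency, and $\partial_t^+e^{u^n}_{jh}$ is not superconvergent, so pairing it naively with the full dual would only restore the nonoptimal rate $h^{k+1}$. The plan is to first sum the scalar error equation in time, so that $\partial_t^+$ telescopes into $e^{u^m}_{jh}-e^{u^0}_{jh}$, then apply the duality argument to the resulting accumulated identity, and finally close with a discrete Gronwall inequality. The previous-step factors $\|e^{u^{n-1}}_{jh}\|_{\mathcal T_h}$ from the convection residual and the already-controlled $L^2(0,T;L^2)$ norm of $e^{\bm q^n}_{jh}$ (from \Cref{energy_quu} and \eqref{p1}) are absorbed in the Gronwall step, while the initial error is controlled by the choice of $(\bm q^0_{jh},u^0_{jh})$. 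This yields the asserted bound on $\max_{1\le n\le N}\|e^{u^n}_{jh}\|_{\mathcal T_h}$ at the rate $C(h^{k+1+\min\{k,1\}}+\Delta t)$.
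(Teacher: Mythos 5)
Your proposal diverges from the paper's proof in a way that contains a genuine gap, and the gap is precisely at the term you treat most optimistically: the flux-mismatch residual. You assert that $(\bar c-c_j)(\bm q^{n-1}_{jh}-\overline{\bm q}^{n}_{jh})$ ``through \eqref{HDG_projection_operator} is paired against the dual projection error $\bm\Phi-\bm\Pi_V^j\bm\Phi$,'' yielding an extra factor $h^{\min\{k,1\}}$. There is no mechanism for this. The trick that converts a pairing against $\bm\Pi_V^j\bm\Phi$ into a pairing against a projection error (used in \Cref{app} for the terms $R_1,R_2$ there) relies on the residual itself being a projection error, orthogonal to piecewise polynomials of degree $k-1$ by \eqref{projection_operator_1}--\eqref{projection_operator_2}. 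Your residual $\bm q^{n-1}_{jh}-\overline{\bm q}^{n}_{jh}$ is a difference of discrete functions lying in $\bm V_h$; it enjoys no such orthogonality, so in the duality identity it pairs against the full $\bm\Pi_V^j\bm\Phi$, whose norm is of size $\|e^{u^n}_{jh}\|$. The only a priori control on this residual available before \Cref{energy_qu} is established is $\ell^2$-in-time of order $h^{k+1}+\Delta t$ (from \Cref{energy_quu} together with \eqref{p1}), so this single term already caps your final bound at $C(h^{k+1}+\Delta t)$, not the claimed superconvergent rate. The structural reason is that a duality argument puts only $\|e^{u^n}_{jh}\|^2$ on the left-hand side: there is no quadratic flux norm into which discrete flux-error pairings can be absorbed. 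The paper's proof of \Cref{energy_qu} is an \emph{energy} argument exactly for this reason: testing \eqref{error1} with $(e^{\bm q^n}_{jh},e^{u^n}_{jh},e^{\widehat u^n}_{jh})$ produces $\|\sqrt{\bar c}\,e^{\bm q^n}_{jh}\|^2_{\mathcal T_h}$ on the left, and condition \eqref{condition_c} lets the term $T_1$ be absorbed into $\|\sqrt{\bar c}\,e^{\bm q^n}_{jh}\|^2_{\mathcal T_h}+\|\sqrt{\bar c}\,e^{\bm q^{n-1}}_{jh}\|^2_{\mathcal T_h}$ with a constant $\alpha<1$, so the superconvergent flux bound emerges self-consistently from the same Gronwall loop rather than being needed as an input.

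The second difference is where the factor $h^{\min\{k,1\}}$ actually comes from. In the paper it does \emph{not} come from dualizing the evolution equation; duality is spent once, in \Cref{app}, on the \emph{stationary} elliptic projection \eqref{projection}, producing \Cref{error_dual}. In the energy argument for \Cref{energy_qu} the only spatially non-absorbed consistency term is $T_2=(\partial^+_t\overline u^n_{jh}-\partial_t\Pi_W^j u^n_j,\,e^{u^n}_{jh})_{\mathcal T_h}$, whose first piece $\partial^+_t(\overline u^n_{jh}-\Pi_W^j u^n_j)$ is superconvergent by \eqref{error_dual_c}, and whose second piece is $O(\Delta t)$; everything else is either absorbed or already of order $\Delta t$. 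This also resolves the obstacle you flagged: the term $(\partial^+_t e^{u^n}_{jh}, v_h)_{\mathcal T_h}$ never needs to be paired against a dual solution at all, because with $v_h=e^{u^n}_{jh}$ it telescopes via the polarization identity \eqref{polarization} into the left-hand side of the Gronwall estimate. Your proposed fix (sum the scalar equation in time, then dualize the accumulated identity) is not workable as stated: after summation the elliptic and convective operators act on the time-accumulated error $\Delta t\sum_n e^{u^n}_{jh}$, so a single dual solve controls only that averaged quantity, not $\max_{1\le n\le N}\|e^{u^n}_{jh}\|_{\mathcal T_h}$; a genuine parabolic duality would require a backward-in-time discrete dual problem and discrete integration by parts in time, which your outline does not set up.
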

\begin{proof}
	The following proof  is  similar to  the proof  in \Cref{Proofof16};  to make the proof self-contained, we include the details here. We take $(\bm r_h,v_h,\widehat{v}_h)=(e^{\bm{q}^{n}}_{jh},e^{u^n}_{jh},e^{{\widehat{u}}^n}_{jh} )$
	in \eqref{error1},  use the identity \eqref{polarization} and add \Cref{error_a} - \Cref{full_discretion_ensemble_b} together to get
	\begin{align}\label{29}
	\begin{split}
	\hspace{1em}&\hspace{-1em} \frac{\|e^{{u}^n}_{jh}\|^2_{\mathcal{T}_h}-\|e^{{u}^{n-1}}_{jh}\|^2_{\mathcal{T}_h}}{2\Delta t}
	+\frac{\|e^{{u}^n}_{jh}-e^{{u}^{n-1}}_{jh}\|^2_{\mathcal{T}_h}}{2\Delta t} +\|\sqrt{\bar c}e^{\bm{q}^{n}}_{jh}\|^2_{\mathcal{T}_h}
	+\|\sqrt{\tau}(e^{u^n}_{jh}-e^{{\widehat{u}}^n}_{jh})\|^2_{\partial \mathcal{T}_h}\\
	&= - ( \overline{\bm \beta}\cdot \nabla e^{u^n}_{jh}, e^{u^n}_{jh})_{\mathcal{T}_h} + \langle \overline{\bm \beta}\cdot\bm n, e^{{u}^n}_{jh} e^{\widehat{u}^n}_{jh}\rangle_{\partial\mathcal{T}_h}+
	((\overline c-c_j)(\bm q_{jh}^{n-1}-\overline{\bm q}_{jh}^n),e^{\bm{q}^{n}}_{jh})_{\mathcal{T}_h}\\
	&\quad+
	(\partial^+_t\overline{u}_{jh}^n-\partial_t\Pi_W^j u_j^n,e^{u^n}_{jh})_{\mathcal{T}_h}+( (\overline{\bm \beta}-\bm{\beta}_j)\cdot \nabla (u_{jh}^{n-1}
	-\overline{u}_{jh}^{n}), e^{u^n}_{jh})_{\mathcal{T}_h}\\
	&\quad-\langle (\overline{\bm \beta}-\bm{\beta}_j)\cdot\bm n, ({u}_{jh}^{n-1}-\overline{u}^n_{jh}) e^{\widehat{u}^n}_{jh}\rangle_{\partial\mathcal{T}_h}.
	\end{split}
	\end{align}
	By Green's formula and the fact
	$\langle (\overline{\bm \beta}\cdot\bm n) e^{\widehat{u}^n}_{jh},  e^{\widehat{u}^n}_{jh}\rangle_{\partial\mathcal{T}_h}=0$,
	we have
	\begin{align*}
	( \overline{\bm \beta}\cdot \nabla e^{u^n}_{jh}, e^{u^n}_{jh})_{\mathcal{T}_h}
	-\langle \overline{\bm \beta}\cdot\bm n, e^{{u}^n}_{jh} e^{\widehat{u}^n}_{jh}\rangle_{\partial\mathcal{T}_h}
	\le \frac{1}{2}\|\sqrt{|\overline{\bm \beta}\cdot\bm n|}
	(e^{u^n}_{jh}-e^{\widehat{u}^n}_{jh})\|^2_{\partial\mathcal{T}_h}.
	\end{align*}
	Condition \eqref{tau} and equality  \eqref{29} give
	\begin{align*}
	\hspace{0.2em}&\hspace{-0.2em}\frac{\|e^{{u}^n}_{jh}\|^2_{\mathcal{T}_h}-\|e^{{u}^{n-1}}_{jh}\|^2_{\mathcal{T}_h}}{2\Delta t}
	+\frac{\|e^{{u}^n}_{jh}-e^{{u}^{n-1}}_{jh}\|^2_{\mathcal{T}_h}}{2\Delta t} +\|\sqrt{\bar c}e^{\bm{q}^{n}}_{jh}\|^2_{ \mathcal{T}_h}
	+\frac 1 2\|\sqrt{\tau}(e^{u^n}_{jh}-e^{{\widehat{u}}^n}_{jh})\|^2_{\partial\mathcal{T}_h}\\
	& \le 
	((\overline c-c_j)(\bm q_{jh}^{n-1}-\overline{\bm q}_{jh}^n),e^{\bm{q}^{n}}_{jh})_{\mathcal{T}_h}+
	(\partial^+_t\overline{u}_{jh}^n-\partial_t\Pi_W^j u_j^n,e^{u^n}_{jh})_{\mathcal{T}_h}\\
	&~~+( (\overline{\bm \beta}-\bm{\beta}_j)\cdot \nabla (u_{jh}^{n-1}
	-\overline{u}_{jh}^{n}), e^{u^n}_{jh})_{\mathcal{T}_h}-\langle (\overline{\bm \beta}-\bm{\beta}_j)\cdot\bm n, ({u}_{jh}^{n-1}-\overline{u}^n_{jh}) e^{\widehat{u}^n}_{jh}\rangle_{\partial\mathcal{T}_h}\\
	&= T_1 + T_2 + T_3.
	\end{align*}
	Next,  we estimate $\{T_i\}_{i=1}^3$. By the condition \eqref{condition_c}, there exist $0<\alpha <1$ such that 
	\begin{align*}
	T_1&=((\overline c-c_j)(e_{jh}^{\bm q^{n-1}}
	-\Delta t\partial_t^+\overline{\bm q}_{jh}^n),e^{\bm{q}^{n}}_{jh})_{\mathcal{T}_h}
	\nonumber\\
	&\le \frac{\alpha}{2}\left(\|\sqrt{\bar c}e^{\bm{q}^{n}}_{jh}\|^2_{\mathcal{T}_h}
	+\|\sqrt{\bar c}e^{\bm{q}^{n-1}}_{jh}\|^2_{\mathcal{T}_h}
	\right)+C\Delta t^2\|\partial_t^+\overline{\bm q}_{jh}^n\|^2_{\mathcal{T}_h},\nonumber\\
	T_2&=(\partial^+_t(\overline{u}_{jh}^n-\Pi_W^j u^n_j)
	+\partial^+_t\Pi_W^ju^n_j-\partial_t\Pi_W^j u_j^n,e^{u^n}_{jh})_{\mathcal{T}_h}\nonumber\\
	&\le C\left( \|\partial^+_t(\overline{u}_{jh}^n-\Pi_W^ju^n_j)\|^2_{\mathcal{T}_h}
	+\|\partial^+_t\Pi_W^j u^n_j-\partial_t\Pi_W^j u_j^n\|^2_{\mathcal{T}_h}
	+\|e^{u^n}_{jh}\|^2_{\mathcal{T}_h} \right).
	\end{align*}
	To treat the term $T_3$, we use the technique in the proof of  \Cref{energy_quu},  where we treat the term $R_3$.  For  $\bm \gamma = \overline{\bm \beta}-\bm{\beta}_j$, we have 
	\begin{align*}
	T_3  &\le \sum_{K\in \mathcal T_h} \|\bm \gamma - \bm\Pi_0 \bm \gamma\|_{\infty, K} \|\nabla (u_{jh}^{n-1}
	-\overline{u}_{jh}^{n})\|_K \| e^{u^n}_{jh}\|_K\\
	&\quad  +  \sum_{K\in \mathcal T_h} \|\bm \gamma - \bm\Pi_0 \bm \gamma\|_{\infty, \partial K} \|u_{jh}^{n-1}
	-\overline{u}_{jh}^{n}\|_{\partial K} (\|  e^{\widehat{u}^n}_{jh}  - e^{{u}^n}_{jh}\|_{\partial K} + \|e^{{u}^n}_{jh}\|_{\partial K})\\
	& \quad + \|\bm \Pi_0 \bm \gamma\|_{\infty,\mathcal T_h} \|\bar c e_{jh}^{\bm q^n}\|_{\mathcal T_h} \|u_{jh}^{n-1}
	-\overline{u}_{jh}^{n}\|_{\mathcal T_h}\\
	&\quad + \| (\bar c-c_j)\bm \Pi_0\bm \gamma\|_{\infty,\mathcal T_h} \| \bm{q}^{n-1}_{jh}-
	\overline{\bm{q}}^{n}_{jh}\|_{\mathcal T_h}  \|u_{jh}^{n-1}
	-\overline{u}_{jh}^{n}\|_{\mathcal T_h}\\
	& = T_{31} + T_{32} + T_{33} + T_{34}.
	\end{align*}
	For $T_{31}$, use the local inverse inequality:
	\begin{align*}
	T_{31} \le C (\|e^{{u}^{n-1}}_{jh}\|_{\mathcal T_h}^2 + \Delta t^2 \| \partial_t^+\overline{u}_{jh}^{n}\|_{\mathcal T_h}^2 + \|e^{{u}^{n}}_{jh}\|_{\mathcal T_h}^2). 
	\end{align*}
	Apply the trace inequality and inverse inequality for the term $T_{32}$ to give
	\begin{align*}
	T_{32}  &\le   C  \sum_{K\in \mathcal T_h} h_K \|\bm \gamma \|_{1,\infty, K} h_K^{-1/2} \|u_{jh}^{n-1}
	-\overline{u}_{jh}^{n}\|_K (\|  e^{\widehat{u}^n}_{jh}  - e^{{u}^n}_{jh}\|_{\partial K} +h_K^{-1/2} \|e^{{u}^n}_{jh}\|_{K})\\
	&\le C (\|e^{{u}^{n-1}}_{jh}\|_{\mathcal T_h}^2 + \Delta t^2 \| \partial_t^+\overline{u}_{jh}^{n}\|_{\mathcal T_h}^2 + \|e^{{u}^{n}}_{jh}\|_{\mathcal T_h}^2) + \frac 1 4  \| \sqrt{\tau}(e^{\widehat{u}^n}_{jh}  - e^{{u}^n}_{jh})\|_{\partial \mathcal T_h}^2. 
	\end{align*}
	For the terms $T_{33}$ and $T_{34}$, use  Young's inequality to obtain
	\begin{align*}
	T_{33}&\le \frac{1-\alpha}{8} \|\sqrt{\bar c} e_{jh}^{\bm q^n}\|_{\mathcal T_h}^2 +  C (\|e^{{u}^{n-1}}_{jh}\|_{\mathcal T_h}^2 + \Delta t^2 \| \partial_t^+\overline{u}_{jh}^{n}\|_{\mathcal T_h}^2),\\
	T_{34} &\le \frac{1-\alpha}{8} \| \sqrt{\bar c} e^{{\bm q}^{n-1}}_{jh}\|_{\mathcal T_h}^2 + \frac{\Delta t^2}4 \| \partial_t^+\overline{\bm q}_{jh}^{n}\|_{\mathcal T_h}^2  +C(\|e^{{u}^{n-1}}_{jh}\|_{\mathcal T_h}^2  + \Delta t^2 \| \partial_t^+\overline{u}_{jh}^{n}\|_{\mathcal T_h}^2). 
	\end{align*}
	
	We add \eqref{29} from $n=1$ to $n=N$,
	and use the above inequalities  to get
	\begin{align}\label{error_1_N}
	\begin{split}
	\hspace{0.2em}&\hspace{-0.2em}\max_{1\le n\le N}\|e^{{u}^n}_{jh}\|^2_{\mathcal{T}_h}
	+\sum_{n=1}^N\|e^{{u}^n}_{jh}-e^{{u}^{n-1}}_{jh}\|^2_{\mathcal{T}_h}+\Delta t\sum_{n=1}^N\|\sqrt{\bar c}e^{\bm{q}^{n}}_{jh}\|^2_{\mathcal{T}_h}+\|\sqrt{\tau}(e^{u^n}_{jh}-e^{{\widehat{u}}^n}_{jh})\|^2_{\mathcal{T}_h}\\
	&\le C\Delta t\sum_{n=1}^N \|e^{u^n}_{jh}\|^2_{\mathcal{T}_h} +C\sum_{n=1}^N
	(
	\Delta t^3\|\partial_t^+\overline{u}_{jh}^n\|^2_{\mathcal{T}_h}
	+\Delta t^3\|\partial_t^+\overline{\bm q}_{jh}^n\|^2_{\mathcal{T}_h}
	)\\
	&\quad+C\sum_{n=1}^N
	(
	\Delta t\|\partial^+_t(\overline{u}_{jh}^n-\Pi_W^j u_j^n)\|^2_{\mathcal{T}_h}
	+\Delta t\|\partial^+_t\Pi_W^j u_j^n-\partial_t\Pi_W^j u_j^n\|^2_{\mathcal{T}_h}
	) \\
	&\quad +C\|e^{\bm{q}^{0}}_{jh}\|^2_{\mathcal{T}_h} +C \|e^{{u}^0}_{jh}\|^2_{\mathcal{T}_h}.
	\end{split}
	\end{align}
	Now we move to bound the terms on the right side of the above inequality as follows,
	\begin{align*}
	&\Delta t^3\sum_{n=1}^N\|\partial_t^+\overline{u}_{jh}^n\|^2_{\mathcal{T}_h}
	=\Delta t\sum_{n=1}^N\int_{\Omega}\left[\int_{t^{n-1}}^{t^n}\partial_t\overline{u}_{jh}^n dt\right]^2  \le C\Delta t^2 \|\partial_t\overline{u}_{jh}^n\|^2_{L^2(0,T;L^2(\Omega))},\\
	&\Delta t^3\sum_{n=1}^N\|\partial_t^+\overline{\bm q}_{jh}^n\|^2_{\mathcal{T}_h}
	=\Delta t\sum_{n=1}^N\int_{\Omega}\left[\int_{t^{n-1}}^{t^n}\partial_t\overline{\bm q}_{jh}^n dt\right]^2  \le C\Delta t^2 \|\partial_t\overline{\bm q}_{jh}^n\|^2_{L^2(0,T;L^2(\Omega))},\\
	&\Delta t\sum_{n=1}^N\|\partial^+_t(\overline{u}_{jh}^n-\Pi_W^j u_j^n)\|^2_{\mathcal{T}_h}\le C \|\partial_t(\overline{u}_{jh}-\Pi_W^j u_j)\|^2_{L^2(0,T;L^2(\Omega))},\\
	&\Delta t\sum_{n=1}^N\|\partial^+_t\Pi_W^j u_j^n-\partial_t\Pi_W^j u_j^n\|^2_{\mathcal{T}_h}
	\le C\Delta t^2 \|\partial_{tt}\Pi_W^j u_j\|^2_{L^2(0,T;L^2(\Omega))}.
	\end{align*}
	Gronwall's inequality and the estimates above applied to \eqref{error_1_N}
	give the result.
\end{proof}

\subsubsection{Superconvergence error estimates by postprocessing}
The following element-by-element postprocessing is defined in \cite{Cockburn_Gopalakrishnan_Sayas_Porjection_MathComp_2010}: Find $u_{jh}^{n\star} \in \mathcal P^{k+1}(K)$ such that for all $(z_h, w_h)\in \mathcal [\mathcal P^{k+1}(K)]^{\perp}\times\mathcal{P}^{0}(K) $ 
\begin{subequations}\label{post_process_1}
	\begin{align}
	(\nabla u_{jh}^{n\star},\nabla z_h)_K&=-({c}_j\bm q^n_{jh},\nabla z_h)_K,\label{post_process_1_a}\\
	(u_{jh}^{n\star},w_h)_K&=(u_h,w_h)_K,\label{post_process_1_b}
	\end{align}
\end{subequations}
where $\mathcal [\mathcal P^{k+1}(K)]^{\perp} = \{z_h\in \mathcal P^{k+1}(K)| (z_h,1)_K = 0\} $. 
\begin{lemma}\label{super_con}
	For any $t\in[0,T]$ and   $k\ge 1$,  we have the following error estimate for the postprocessed solution: 
	\begin{align*}
	\|\Pi_{k+1} u_j^n - u_{jh}^{n\star}\|_{\mathcal T_h}  &\le C  \|\Pi_W^j u^n_j- u^n_{jh}\|_{\mathcal{T}_h}+Ch\|c_j(\bm{q}^n_{jh}-\bm{q}^n_j) \|_{\mathcal{T}_h} \\
	&\quad +Ch\|\nabla(u_j^n - \Pi_{k+1} u_j^n)\|_{\mathcal{T}_h}.
	\end{align*}
\end{lemma}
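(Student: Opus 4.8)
The plan is to work element by element and estimate $\delta := \Pi_{k+1} u_j^n - u_{jh}^{n\star}$ on each $K\in\mathcal T_h$, then sum. The starting point is the $L^2(K)$-orthogonal splitting $\delta = \overline\delta + \delta^\perp$, where $\overline\delta$ is the mean value of $\delta$ on $K$ and $\delta^\perp\in[\mathcal P^{k+1}(K)]^{\perp}$ has zero mean; since these two pieces are orthogonal, $\|\delta\|_K^2 = \|\overline\delta\|_K^2 + \|\delta^\perp\|_K^2$, and it suffices to bound the two contributions separately.

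For the mean part I would use the postprocessing equation \eqref{post_process_1_b} with $w_h=1$ to get $(u_{jh}^{n\star},1)_K = (u_{jh}^n,1)_K$. Since $k\ge 1$, constants lie in $\mathcal P^{k-1}(K)$, so \eqref{projection_operator_2} gives $(\Pi_W^j u_j^n,1)_K = (u_j^n,1)_K$, while $(\Pi_{k+1} u_j^n,1)_K = (u_j^n,1)_K$ by \eqref{L2_do}. Combining these identities yields $(\delta,1)_K = (\Pi_W^j u_j^n - u_{jh}^n, 1)_K$, whence a Cauchy--Schwarz estimate gives $\|\overline\delta\|_K \le \|\Pi_W^j u_j^n - u_{jh}^n\|_K$, which is precisely the first term on the right-hand side.

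For the zero-mean part I would exploit \eqref{post_process_1_a}. Taking $z_h = \delta^\perp$ there, and using $\nabla\delta^\perp = \nabla\delta$, I obtain $\|\nabla\delta^\perp\|_K^2 = (\nabla\Pi_{k+1}u_j^n, \nabla\delta^\perp)_K + (c_j\bm q_{jh}^n, \nabla\delta^\perp)_K$. The crucial step is to insert the first equation of \eqref{concection_pde}, namely $c_j\bm q_j^n = -\nabla u_j^n$, to rewrite $(c_j\bm q_{jh}^n,\nabla\delta^\perp)_K = (c_j(\bm q_{jh}^n - \bm q_j^n),\nabla\delta^\perp)_K - (\nabla u_j^n,\nabla\delta^\perp)_K$. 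The two gradient terms then combine into $(\nabla(\Pi_{k+1}u_j^n - u_j^n),\nabla\delta^\perp)_K$, so that $\|\nabla\delta^\perp\|_K \le \|\nabla(\Pi_{k+1}u_j^n - u_j^n)\|_K + \|c_j(\bm q_{jh}^n - \bm q_j^n)\|_K$. A Poincar\'e inequality for zero-mean functions, $\|\delta^\perp\|_K \le Ch_K\|\nabla\delta^\perp\|_K$, then supplies the extra factor of $h_K$ that is responsible for the superconvergence. Adding the two contributions, summing over $K\in\mathcal T_h$, and taking square roots gives the claimed bound.

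The main obstacle is the zero-mean part: one must recognize that $\delta^\perp$ is an admissible test function in \eqref{post_process_1_a} and that replacing $c_j\bm q_j^n$ by $-\nabla u_j^n$ through the PDE is exactly what recasts the flux error together with the interpolation gradient error into the right form, while the Poincar\'e inequality converts the $H^1$ control of $\delta^\perp$ into an $L^2$ bound with a gain of $h_K$. I would also flag that the hypothesis $k\ge 1$ enters only through the mean-preservation property of $\Pi_W^j$; for $k=0$ the argument bounding $\overline\delta$ would break down.
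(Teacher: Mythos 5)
Your proof is correct, and it reaches the stated bound by a route that differs from the paper's in its bookkeeping. The paper works with a single combined function $e_{jh}^n = u_{jh}^{n\star} - u_{jh}^n + \Pi_W^j u_j^n - \Pi_{k+1} u_j^n$, which has zero mean on each $K$, estimates $\|\nabla e_{jh}^n\|_K$ by testing \eqref{post_process_1_a} with $e_{jh}^n$ and substituting the PDE, and must then invoke a local inverse inequality to convert the resulting term $\|\nabla(\Pi_W^j u_j^n - u_{jh}^n)\|_K$ into $h_K^{-1}\|\Pi_W^j u_j^n - u_{jh}^n\|_K$ (this is \eqref{H1}); Poincar\'e and a final triangle inequality then finish. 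You instead split $\delta = \Pi_{k+1} u_j^n - u_{jh}^{n\star}$ orthogonally into its mean and zero-mean parts: the mean part is controlled purely algebraically, since the three mean-preservation identities (for $u_{jh}^{n\star}$, $\Pi_{k+1}$, and $\Pi_W^j$ with $k\ge 1$) reduce $(\delta,1)_K$ to $(\Pi_W^j u_j^n - u_{jh}^n, 1)_K$, after which Cauchy--Schwarz suffices, while the zero-mean part sees only the flux error $c_j(\bm q_{jh}^n-\bm q_j^n)$ and the interpolation gradient error, so no inverse inequality is ever needed. Both proofs use the same three essential ingredients (the postprocessing system \eqref{post_process_1}, the PDE relation $c_j\bm q_j^n = -\nabla u_j^n$, and the Poincar\'e inequality for zero-mean polynomials), and both need $k\ge 1$ at exactly the same spot, as you correctly flag; the trade-off is that your argument is marginally cleaner in that the discretization error $\Pi_W^j u_j^n - u_{jh}^n$ never passes through a gradient, whereas the paper's has the minor convenience of estimating a single error function throughout and concluding with one triangle inequality.
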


\begin{proof}
	By the properties of $\Pi_W$ and $\Pi_{k+1}$, we obtain
	\begin{align*}
	(\Pi_W^j u_j^n,w_0)_K &=(u_j^n,w_0)_K,\quad \text{ for all } w_0\in \mathcal{P}^{0}(K),\\
	(\Pi_{k+1} u_j^n,w_0)_K&=(u_j^n,w_0)_K, \quad \text{ for all } w_0\in \mathcal{P}^{0}(K).
	\end{align*}
	Hence, for all $w_{0}\in \mathcal P^{0}(K)$, we have
	\begin{align*}
	(\Pi_W u_j^n-\Pi_{k+1} u_j^n, w_{0})_K = 0.
	\end{align*}
	
	Let $e_{jh}^n=u_{jh}^{n\star} - u_{jh}^n+\Pi_W^j u_j^n-\Pi_{k+1} u_j^n$.  \Cref{post_process_1} and an inverse inequality give 
	\begin{align*}
	\|\nabla e^n_{jh}\|_K^2&=(\nabla (u_{jh}^{n\star} - u_{jh}),\nabla e^n_{j,h} )_K+( \nabla (\Pi_W^ju^n_j -\Pi_{k+1} u^n_j),\nabla e^n_{jh} )_K \nonumber\\
	&=(-\nabla u^n_{jh}-{c}_j \bm{q}^n_{jh},\nabla e^n_{jh} )_K+(  \nabla (\Pi_W^j u^n_j-\Pi_{k+1} u^n_j),\nabla e^n_{jh} )_K \nonumber\\
	&=(\nabla (\Pi_W^ju^n_j - u^n_{jh})-(\bm{q}^n_{jh}-\bm{q}^n_j)  +\nabla (u^n_j-\Pi_{k+1} u^n_j),\nabla e^n_{jh})_K.
	\end{align*}
	This implies
	\begin{align}\label{H1}
	\|\nabla e^n_{jh}\|_K \le C  (h_K^{-1}\|\Pi_W^j u^n_j- u^n_{jh}\|_K+\|c_j (\bm{q}^n_{jh}-\bm{q}^n_j) \|_K +\|\nabla(u_j^n - \Pi_{k+1} u_j^n)\|_{K}).
	\end{align}
	Since $(e_h,1)_K=0$,  apply the Poincar\'{e} inequality and the estimate \eqref{H1} to give
	\begin{align*}
	\|e_{jh}^n\|_K&\le C h_K \|\nabla e^n_{jh}\|_K \\
	&\le C (\|\Pi_W^j u^n_j- u^n_{jh}\|_K+h_K\|c_j(\bm{q}^n_{jh}-\bm{q}^n_j) \|_K +h_K\|\nabla(u_j^n - \Pi_{k+1} u_j^n)\|_{K} ).
	\end{align*}
	Hence, we have
	\begin{align*}
	\|\Pi_{k+1} u^n_{j} - u_{jh}^{n\star}\|_{\mathcal T_h}&\le\|\Pi_{k+1} u^n_j -\Pi_W^j u^n_j- u_{jh}^{n\star} + u_{jh}^n\|_{\mathcal T_h} + \|\Pi_W^j u^n_j-u^n_{jh}\|_{\mathcal T_h} \nonumber\\
	&  \le  C\|\Pi_W^j u^n_j- u^n_{jh}\|_{\mathcal{T}_h}+Ch\|c_j(\bm{q}^n_{jh}-\bm{q}^n_j) \|_{\mathcal{T}_h}\\
	&\quad +Ch\|\nabla(u_j^n - \Pi_{k+1} u_j^n)\|_{\mathcal{T}_h}.
	\end{align*}
\end{proof}

From \Cref{super_con}  and the estimate in \eqref{lemmainter_orthoo} we complete the proof of \eqref{eror_ustar} in \Cref{main_theorem}.

\section{Numerical experiments}
\label{Numericalexperiments}
In this section, we present some numerical tests of the Ensemble HDG method  for parameterized convection diffusion PDEs.  Although we derived the a priori error estimates for diffusion dominated  problems, we also present numerical results for the convection dominated case to show the performance of the Ensemble HDG method for the convection  dominated diffusion problems. For all examples, we take 
$\tau=1+\max_{1\le j\le J}\|\bm{\beta}_j\|_{0,\infty}$ so that \eqref{tau} is satisfied, the coefficients $c_j$ satisfy the condition \eqref{condition_c},  and a group of simulations are  considered containing  $J = 3$ members. Let $Eu_j$ be the error bewteen the exact solution $u_j$ at the final time $T=1$ and the Ensemble HDG solution $u_{jh}^N$, i.e., $Eu_j = \|u_j^N -u_{jh}^N\|_{\mathcal T_h}$. Let 
\begin{align*}
E\bm q _j = \sqrt{\Delta t\sum_{n=1}^N \|\bm q_j^n - \bm q_{jh}^n\|^2_{\mathcal T_h}}, \quad \textup{and}\quad E u_j^\star = \sqrt{\Delta t\sum_{n=1}^N \| u_j^n -  u_{jh}^{n\star}\|^2_{\mathcal T_h}}.
\end{align*}

\begin{example}\label{example1}
	We first test the convergence rate of the Ensemble HDG method for  diffusion dominated PDEs on a square domain $\Omega = [0,1]\times[0,1]$. The data is chosen as
	\begin{gather*}
	c_1 = 0.26959, \ c_2 = 0.26633, \ c_3 = 0.30525,\\
	\bm \beta_1 = 1.6797[y,x], \ \bm \beta_2 = 1.6551[y,x], \ \bm \beta_3 = 1.1626[y,x], \\
	u_j = \sin(t)\sin(x)\sin(y)/j,  \ \bm q_j = -1/c_j \nabla u_j, \  j=1,2,3,
	\end{gather*}
	and the initial conditions, boundary conditions, and source terms are chosen to match the exact solution of  \Cref{concection_pde}.
	
	In order to confirm our theoretical results, we take $\Delta t = h$ when $k=0$ and $\Delta t = h^3$ when $k=1$. The approximation errors of the Ensemble HDG method  are listed in \Cref{table_1} and the observed convergence rates match our theory. 	
	\begin{table}[H]
		\caption{History of convergence for Example \ref{example1}.}\label{table_1}
			\begin{tabular}{c|c|c|c|c|c|c|c}
				\Xhline{0.1pt}

				\multirow{2}{*}{Degree}
				&\multirow{2}{*}{$\frac{h}{\sqrt{2}}$}	
				&\multicolumn{2}{c|}{$E {\bm q}_1$}	
				&\multicolumn{2}{c|}{$E u_1$}	
				&\multicolumn{2}{c}{$Eu_1^\star$}	\\
				\cline{3-8}
				& &Error &Rate
				&Error &Rate
				&Error &Rate
				\\
				\cline{1-8}
				\multirow{5}{*}{ $k=0$}
				&$2^{-1}$	&8.5356E-01	    &	       &8.0704E-02	    &	         &1.3681E-01	&\\
				&$2^{-2}$	&5.3683E-01	    &0.67	&4.6752E-02      &0.79 	   &5.7997E-02 	&1.24\\
				&$2^{-3}$	&2.9377E-01 	&0.87 	& 2.4599E-02 	 &0.93 	   &2.6288E-02	&1.14\\
				&$2^{-4}$	&1.5300E-01 	&0.94 	&  1.2677E-02	 &0.96 	   &1.2902E-02	&1.03\\
				&$2^{-5}$	&7.8021E-02     &0.97	&  6.4474E-03    &0.98 	   &6.4760E-03 	&0.99\\

				\cline{1-8}
				\multirow{5}{*}{$k=1$}
				&$2^{-1}$	&2.6429E-01 & 	       & 4.2641E-02	& 	      & 4.3413E-02     &\\
				&$2^{-2}$	&7.5086E-02&1.82	&1.0472E-02&2.03	&6.1017E-03 	&2.83\\
				&$2^{-3}$	& 1.9707E-02 	&1.93	&2.6345E-03	&1.99	&7.9146E-04  &2.95\\
				&$2^{-4}$	& 5.0211E-03	&1.97	& 6.6870E-04 	&1.98	&1.0026E-04	&2.98\\
				&$2^{-5}$	& 1.2653E-03&1.99	&1.6896E-04	&1.98	& 1.2598E-05 	&2.99\\

				\Xhline {0.1 pt}

			\end{tabular}

			\begin{tabular}{c|c|c|c|c|c|c|c}
				\Xhline{1pt}

				\multirow{2}{*}{Degree}
				&\multirow{2}{*}{$\frac{h}{\sqrt{2}}$}	
				&\multicolumn{2}{c|}{$E {\bm q}_2$}	
				&\multicolumn{2}{c|}{$E u_2$}	
				&\multicolumn{2}{c}{$Eu_2^\star$}	\\
				\cline{3-8}
				& &Error &Rate
				&Error &Rate
				&Error &Rate
				\\
				\cline{1-8}
				\multirow{5}{*}{ $k=0$}
				&$2^{-1}$	&8.5466E-01 	&	       &8.1522E-02	   &	      &1.3739E-01	 &\\
				&$2^{-2}$	&5.3907E-01	    &0.66	&4.8107E-02     &0.76 	&5.9168E-02	   &1.22\\
				&$2^{-3}$	&2.9567E-01	    &0.87 	&2.5614E-02     &0.91 	&2.7258E-02    &1.12\\
				&$2^{-4}$	&1.5420E-01	    &0.94 	&1.3277E-02     &0.95 	&1.3495E-02    &1.01\\
				&$2^{-5}$	&7.8696E-02 	&0.97 	&6.7714E-03 	&0.97 	&6.7992e-03	   &0.99\\

				\cline{1-8}
				\multirow{5}{*}{$k=1$}
				
				&$2^{-1}$	&2.6577E-01 	&	    & 4.2796E-02	&	    & 4.3973E-02	&\\
				&$2^{-2}$	& 7.5666E-02&1.81 	& 1.0405E-02&2.04	&  6.2024E-03 	&2.83\\
				&$2^{-3}$	&1.9879E-02	&1.93	&2.6069E-03	&2.00 	&8.0552E-04	&2.94\\
				&$2^{-4}$	& 5.0673E-03	&1.97	& 6.6105E-04	&1.98	&1.0209E-04	&2.98\\
				&$2^{-5}$	&1.2772E-03 	&1.99	&1.6699E-04 	&1.99	&1.2832E-05	&2.99\\

				\Xhline{0.1 pt}

			\end{tabular}

			\begin{tabular}{c|c|c|c|c|c|c|c}
				\Xhline{1pt}

				\multirow{2}{*}{Degree}
				&\multirow{2}{*}{$\frac{h}{\sqrt{2}}$}	
				&\multicolumn{2}{c|}{$E {\bm q}_3$}	
				&\multicolumn{2}{c|}{$E u_3$}	
				&\multicolumn{2}{c}{$Eu_3^\star$}	\\
				\cline{3-8}
				& &Error &Rate
				&Error &Rate
				&Error &Rate
				\\
				\cline{1-8}
				\multirow{5}{*}{ $k=0$}
				&$2^{-1}$	&8.0839E-01 &	       &3.4525E-02	   &	      &1.1145E-01     &\\
				&$2^{-2}$	&5.0993E-01	&0.66	&2.2025E-02	    &0.65 	&3.9756E-02     &1.49\\
				&$2^{-3}$	&2.7915E-01	&0.87 	&1.2282E-02     &0.84 	& 1.5196E-02	&1.39\\
				&$2^{-4}$	&1.4529E-01 &0.94 	&6.5117E-03     &0.92 	&6.9102E-03     &1.14\\
				&$2^{-5}$	&7.4042E-02	&0.97 	& 3.3567E-03    &0.96 	& 3.4076E-03	&1.02\\

				\cline{1-8}
				\multirow{5}{*}{$k=1$}
				
				&$2^{-1}$	&2.4988E-01	&	    & 4.0593E-02&	    &3.9155E-02	&\\
				&$2^{-2}$	&6.9685E-02&1.84	&1.1221E-02	&1.86	&5.5006E-03	&2.83\\
				&$2^{-3}$	&1.8087E-02&1.95	&2.9375E-03&1.93 	&7.1138E-04	&2.95\\
				&$2^{-4}$	& 4.5831E-03&1.98 	&7.5247E-04&1.96 	&8.9813E-05	&2.99\\
				&$2^{-5}$	& 1.1520E-03&1.99 	&1.9046E-04	&1.98 	&1.1261E-05	&3.00\\

				\Xhline{0.1 pt}

			\end{tabular}

	\end{table}

\end{example}

\begin{example}\label{example2}
	Next, we perform  Ensemble HDG computations for the convection  dominated case with exact solutions haveing interior layers. But we do not attempt to compute convergence rates here; instead for illustration we plot all the ensemble members $\{u_{jh}\}_{j=1}^3$ at the final time $T= 0.1$ and also plot the exact solution for comparsion.  We can see that  the Ensemble HDG method is able to capture the very sharp interior layers in the solution with almost no oscillatory behavior, see e.g. \Cref{fig:u1,fig:u2,fig:u3}.
	
	The domain is $\Omega = [0,1]\times[0,1]$ and it is uniformly partition into $131072$ triangles ($h = \sqrt{2}/256$) and also  $\Delta t = 1/1000$. The  initial conditions, boundary conditions, and source terms are chosen to match  \Cref{concection_pde} for  the data 
	\begin{gather*}
	c_1 =10^{4}, \ c_2 =2 \times 10^{4}, \ c_3 =3 \times 10^{4},\\
	\bm \beta_1 = [2,3], \ \bm \beta_2 = [3,4], \ \bm \beta_3 =[4,5], 
	\end{gather*}
	and the exact solutions $\{u_j\}_{j=1}^3$ are chosen as
	\begin{align*}
	u_1 &= \sin(t)x(1-x)y(1-y)\left[\frac 1 2 +  \frac{{\arctan 2 \sqrt{c_1} \left(\frac{1}{12} - \left(x-\frac{1}{3}\right)^2 - \left(y-\frac 1 {2}\right)^2 \right)}}{\pi} \right],\\
	u_2 &= \sin(t)x(1-x)y(1-y)\left[\frac 1 2 +  \frac{{\arctan 2 \sqrt{c_2} \left(\frac{1}{14} - \left(x-\frac{1}{2}\right)^2 - \left(y-\frac 1 {3}\right)^2 \right)}}{\pi} \right],\\
	u_3 &= \sin(t)x(1-x)y(1-y)\left[\frac 1 2 +  \frac{{\arctan 2 \sqrt{c_3} \left(\frac{1}{16} - \left(x-\frac{1}{2}\right)^2 - \left(y-\frac 1 {2}\right)^2 \right)}}{\pi} \right].
	\end{align*}
	
	\begin{figure}
		\centerline{
			\hbox{\includegraphics[width=2.5in]{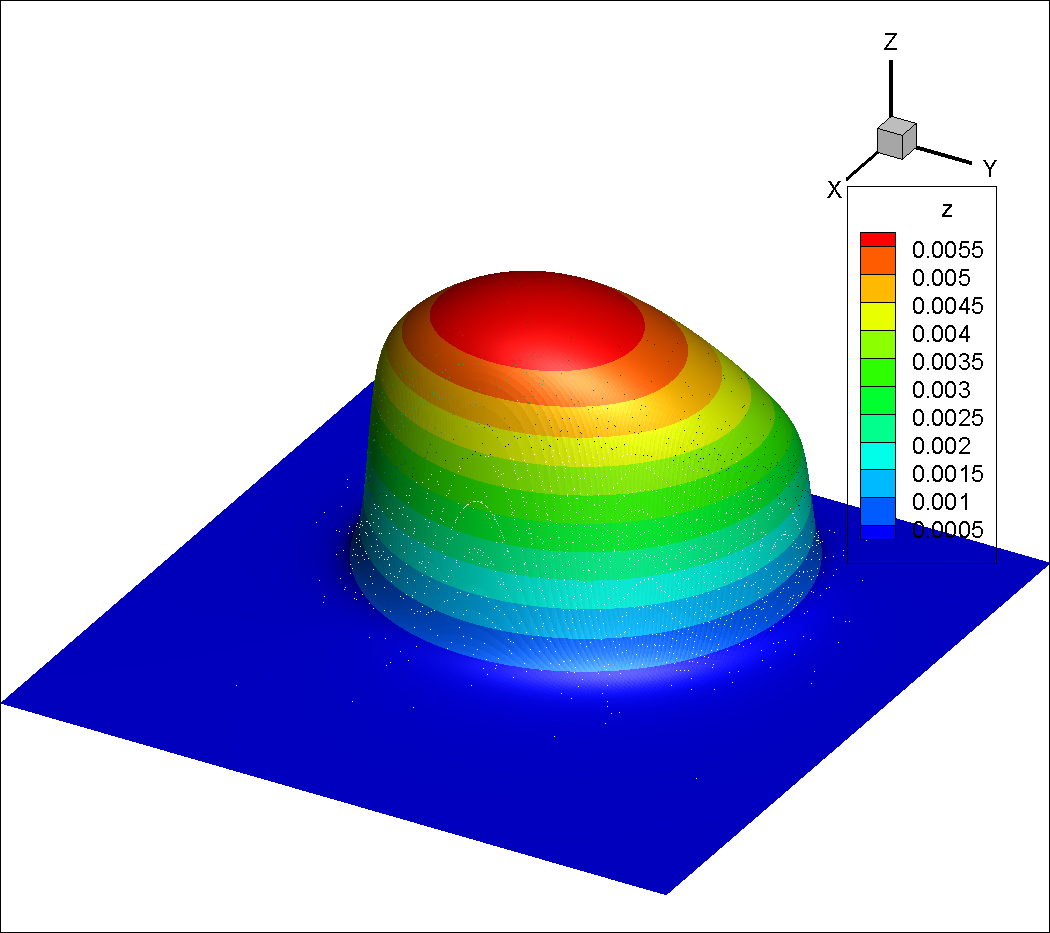}}
			\hbox{\includegraphics[width=2.5in]{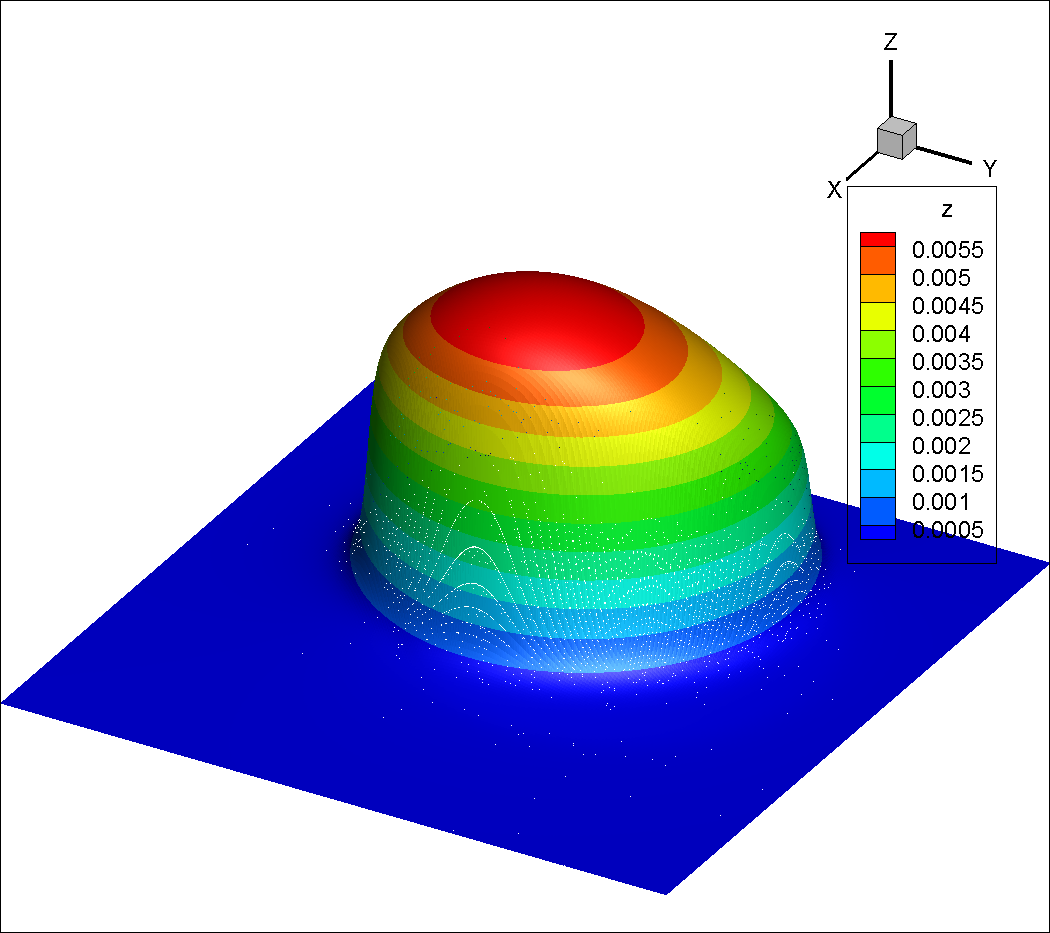}}
		}
		
		\caption{\label{fig:u1} Left is the exact solution $u_1$ and right is $u_{1h}$ computed by Ensemble HDG.}
	\end{figure}
	\begin{figure}
		\centerline{
			\hbox{\includegraphics[width=2.5in]{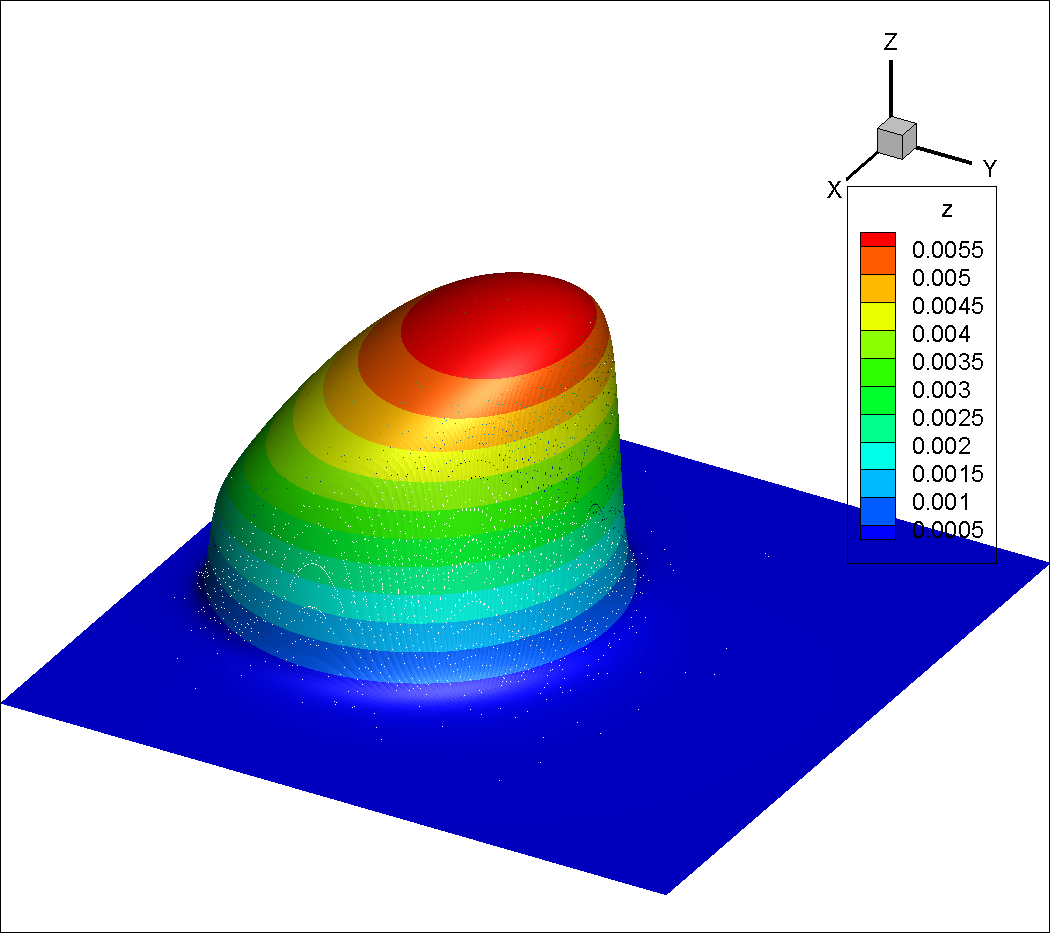}}
			\hbox{\includegraphics[width=2.5in]{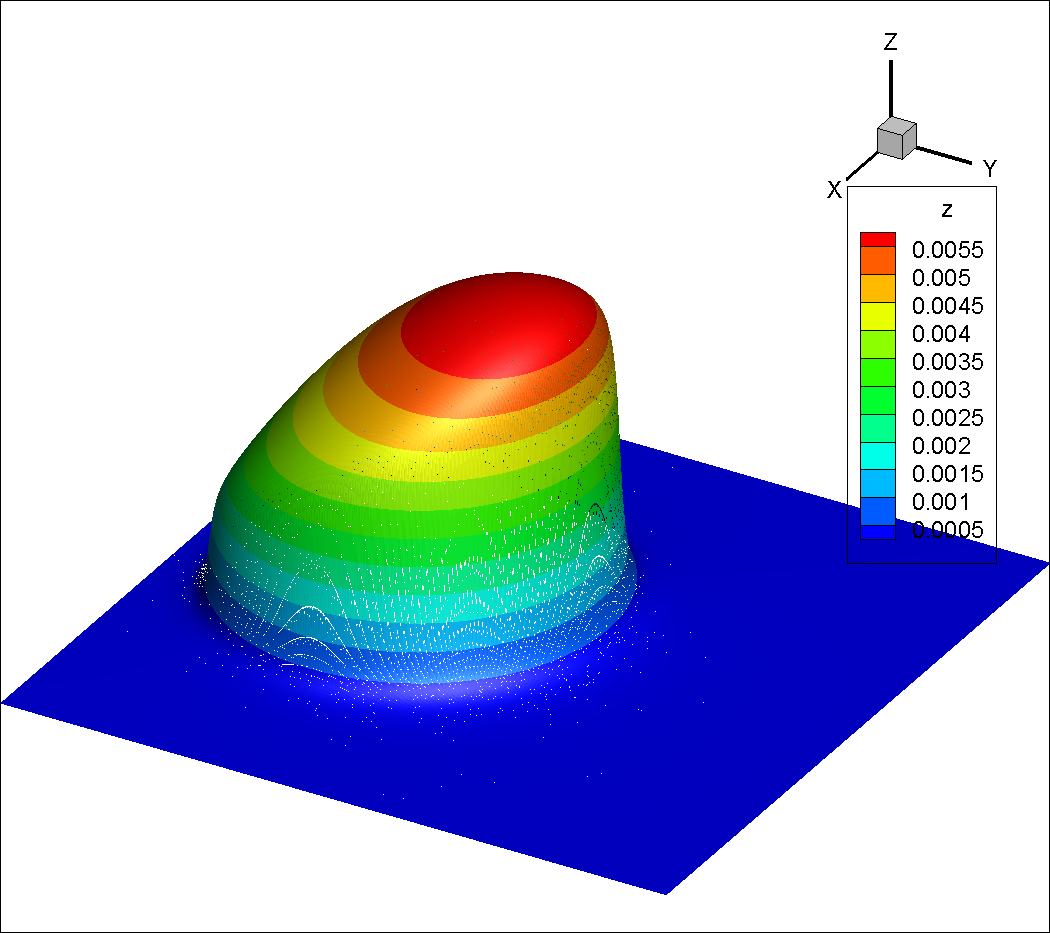}}
		}
		
		\caption{\label{fig:u2}  Left is the exact solution $u_2$ and right is $u_{2h}$ computed by Ensemble HDG.}
	\end{figure}
	\begin{figure}
		\centerline{
			\hbox{\includegraphics[width=2.5in]{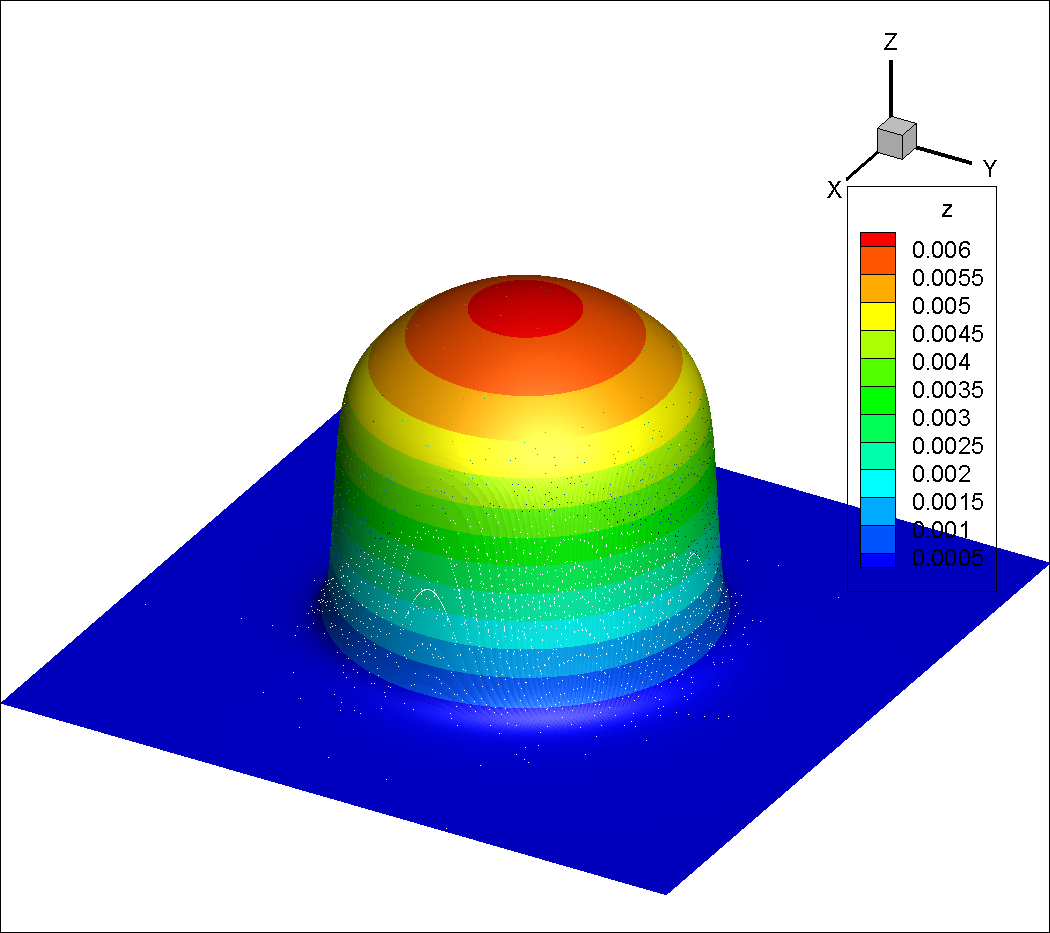}}
			\hbox{\includegraphics[width=2.5in]{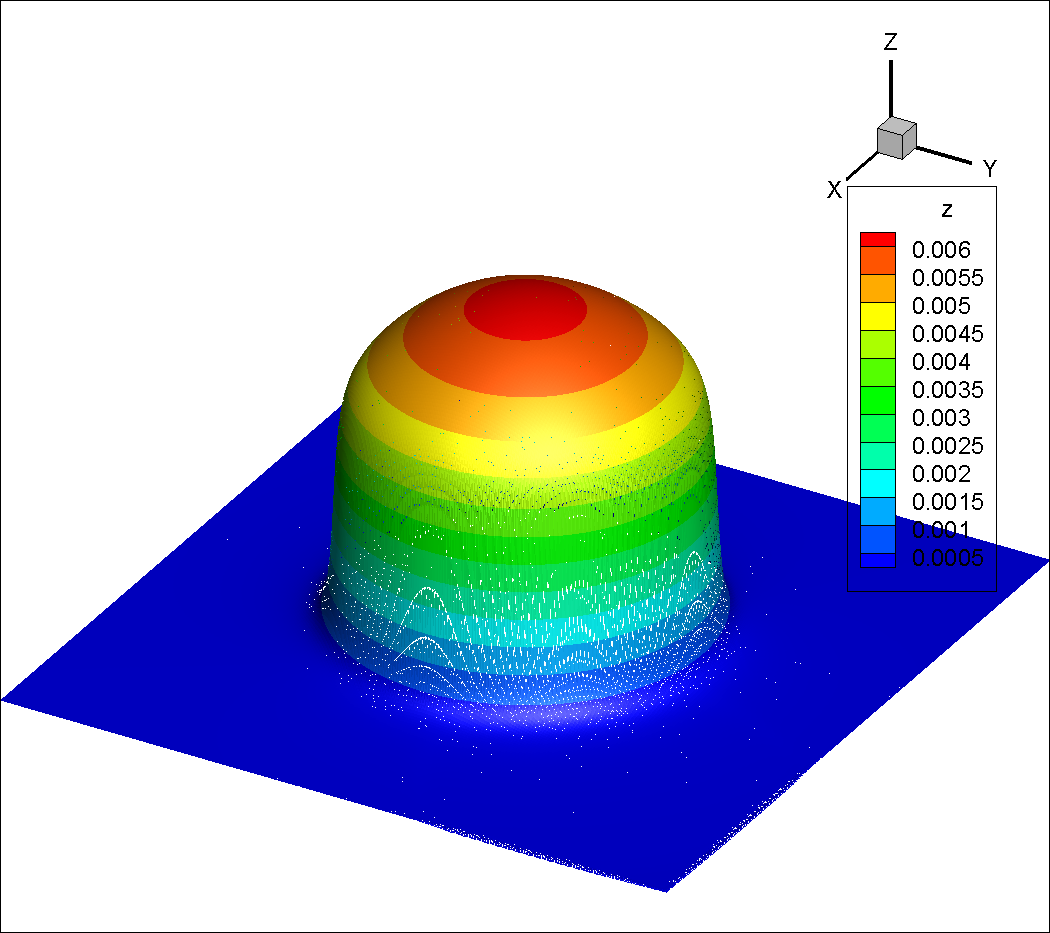}}
		}
		
		\caption{\label{fig:u3}  Left is the exact solution $u_3$ and right is $u_{3h}$ computed by Ensemble HDG.}
	\end{figure}
\end{example}

\begin{example}\label{example3}
	Fianlly, we perform the Ensemble HDG method for a group of  convection  dominated problems without exact solutions. In this example, the problems exhibit not interior layers but boundary layers. It is well known that the boundary layers are more difficult than interior layers for all numerical methods.  Since in \Cref{example2} the Ensemble HDG captured the interior layers without oscillations, we didn't plot the postprocessed solutions there. However,  our numerical test shows that the postprocessed solutions $u_{jh}^\star$ are better than $u_{jh}$ for solutions with  boundary layers;  see e.g. \Cref{fig:u4,fig:u5,fig:u6}. We note  there is no superconvergent rate even for a single convection dominated diffusion problem PDE using HDG methods, see,  e.g. \cite{Fu_Qiu_Zhang_Convection_Dominated_M2AN_2015}. 
	
	We plot all the ensemble members $u_{jh}$  and $u_{jh}^\star$  at the final time $T= 0.1$ for comparsion.  The domain, the mesh, the time step, the bounday conditions and the initial conditions are the same with \Cref{example2}. For the other data, we take 
	\begin{gather*}
	c_1 =60, \ c_2 =120, \ c_3 =180, \\ 
	\bm \beta_1 = [2,3], \ \bm \beta_2 = [3,4], \ \bm \beta_3 =[4,5], \\
	f_1 = 2, \  f_2 = 5, \ f_3 = 8.
	\end{gather*}
	\begin{figure}
		\centerline{
			\hbox{\includegraphics[width=2.5in]{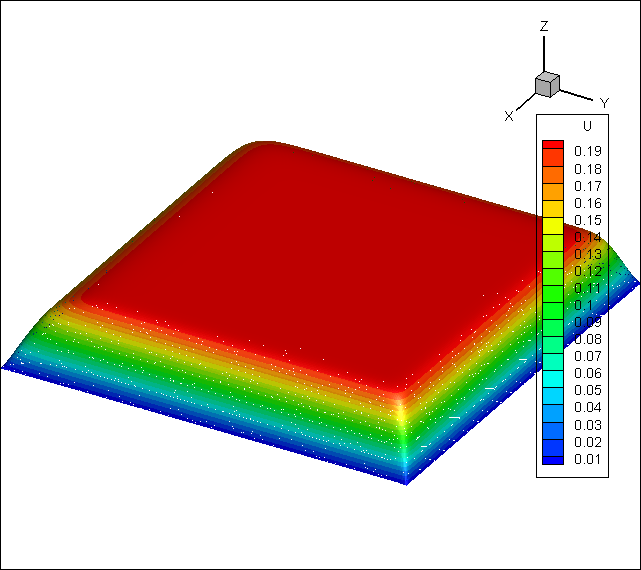}}
			\hbox{\includegraphics[width=2.5in]{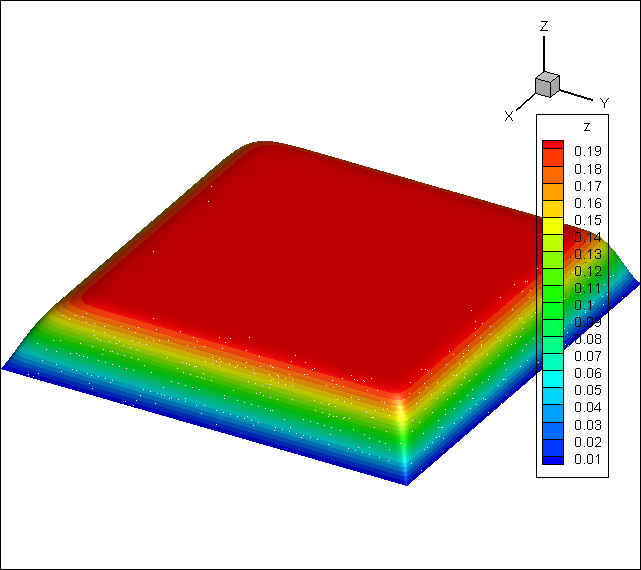}}
		}
		\caption{\label{fig:u4} Left is solution $u_{1h}$ and right is the postprocessed solution $u_{1h}^\star$.}
	\end{figure}
	\begin{figure}
		\centerline{
			\hbox{\includegraphics[width=2.5in]{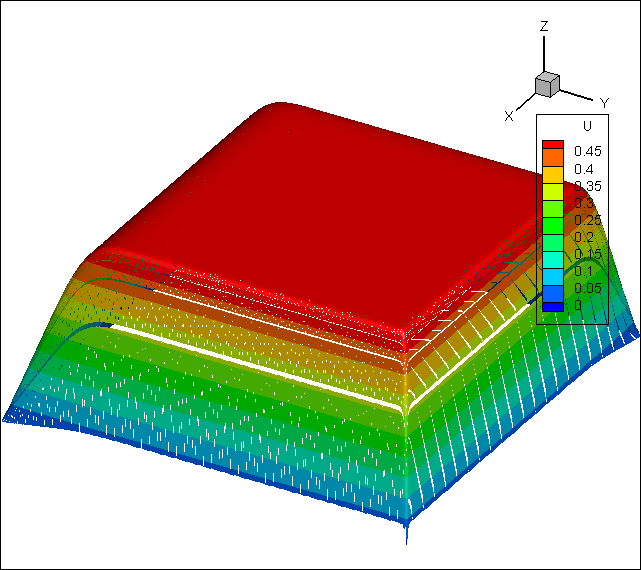}}
			\hbox{\includegraphics[width=2.5in]{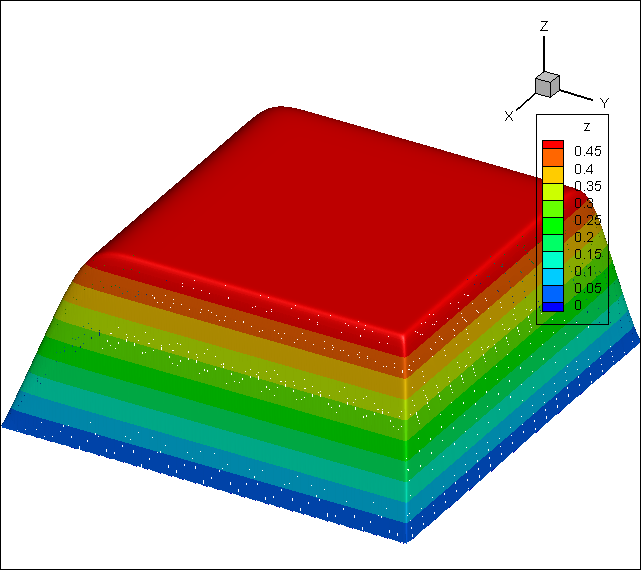}}
		}
		
		\caption{\label{fig:u5} Left is solution $u_{2h}$ and right is the postprocessed solution $u_{2h}^\star$.}
	\end{figure}
	\begin{figure}
		\centerline{
			\hbox{\includegraphics[width=2.5in]{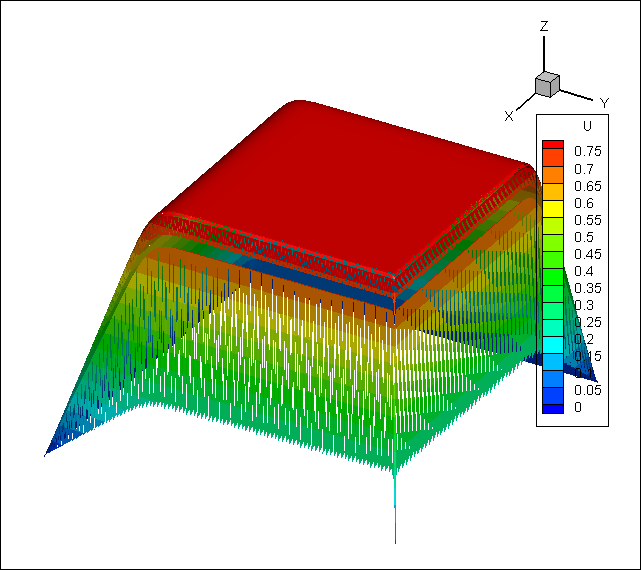}}
			\hbox{\includegraphics[width=2.5in]{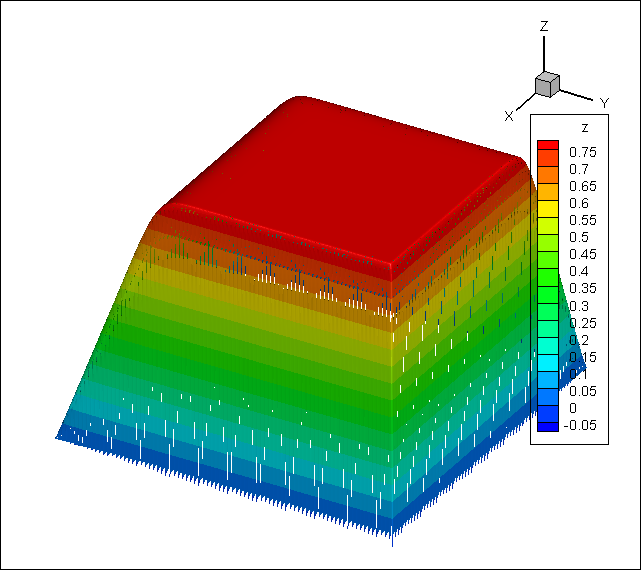}}
		}
		\caption{\label{fig:u6} Left is solution $u_{3h}$ and right is the postprocessed solution $u_{3h}^\star$.}
	\end{figure}
\end{example} 	

\section{Conclusion}
In this work, we first devised a superconvergent Ensemble HDG method for parameterized convection diffusion PDEs.  This Ensemble HDG method shares one common coefficient matrix and multiple RHS vectors, which is more efficient than performing separate simulations. We proved  optimal error estimates  for the flux $\bm q_j$ and the scalar variable $u_j$;  moreover, we obtained the superconvergent rate for $u_j$. As far as we are aware, this is the first time in the literature.

There are a number of topics that can be explored in the future, including devising  high order time stepping methods, a group of convection dominated diffusion PDEs,  and stochastic PDEs.

\section*{Acknowledgements}
G.\ Chen is supported by China Postdoctoral Science Fou-\\ndation grant 2018M633339. G.\ Chen thanks Missouri University of Science and Technology for hosting him as a visiting scholar; some of this work was completed during his research visit. L. Xu is supported in part by a Key Project of the Major Research Plan of NSFC grant no. 91630205 and the NSFC grant no. 11771068. The authors thank Dr. John Singler for his comments and edits, which significantly improved this paper.  
\section{Appendix}
\label{app}

In this section we only give a proof for 
\eqref{p1} and \eqref{p2}, since the rest are similar. To prove \eqref{error_dual_bb}-\eqref{error_dual_d}, we differentiate  the error equations in  \Cref{lemma41} with respect to time $t$.  It is easy to check that the operators $\Pi_W^j$  commute with the time derivative, i.e., $\partial_t \Pi_W^j u_j = \Pi_W^j \partial_tu_j $, since  the velocity vector fields $\bm \beta_j$ are independent of time $t$. 

\subsection{The equations of the projection of the errors}
\begin{lemma} \label{lemma41} We have the following equalities
	\begin{subequations}
		\begin{align*}
		(c_j  \bm{\Pi}_V^j\bm q_j,\bm{r}_h)_{\mathcal{T}_h}-(\Pi_W^ju_j,\nabla\cdot \bm{r}_h)_{\mathcal{T}_h}+\langle
		P_M u_j,\bm{r}_h\cdot \bm n \rangle_{\partial{\mathcal{T}_h}}
		= (c_j  (\bm\Pi_V^j\bm q_j-\bm q_j),\bm{r}_h)_{\mathcal{T}_h},\\
		(\nabla\cdot\bm\Pi_V^j \bm q_j, v_h)_{\mathcal{T}_h}-\langle \bm\Pi_V^j \bm q_j\cdot \bm{n},\widehat{v}_h\rangle_{\partial{\mathcal{T}_h}}
		+\langle \tau (\Pi_W^j u_j-P_M u_j),v_h-\widehat{v}_h \rangle_{\partial\mathcal{T}_h}\nonumber\\
		+( {\bm \beta}_j \cdot \nabla \Pi_W^j u_j, v_h)_{\mathcal{T}_h}
		-\langle{\bm \beta}_j \cdot\bm n, \Pi_W^j u_j \widehat{v}_h\rangle_{\partial\mathcal{T}_h}=(f_j-\partial_t u_j,v_h)_{\mathcal{T}_h},
		\end{align*}
		for all $(\bm r_h,v_h,\widehat{v}_h)\in \bm V_h\times W_h\times M_h$.
	\end{subequations}
\end{lemma}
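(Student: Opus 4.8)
The plan is to establish both identities exactly as in the time-dependent \Cref{lemma_error}: since $c_j$ and $\bm\beta_j$ are now time independent I merely drop the superscript $n$ and keep $\partial_t u_j$ in place of $\partial_t u_j^n$, so no step of that argument changes. For the flux identity, I start from the left-hand side and use \eqref{projection_operator_2} to replace $(\Pi_W^j u_j,\nabla\cdot\bm r_h)_{\mathcal T_h}$ by $(u_j,\nabla\cdot\bm r_h)_{\mathcal T_h}$ (legitimate since $\nabla\cdot\bm r_h|_K\in\mathcal P^{k-1}(K)$) and \eqref{L2_edge} to replace $\langle P_M u_j,\bm r_h\cdot\bm n\rangle_{\partial\mathcal T_h}$ by $\langle u_j,\bm r_h\cdot\bm n\rangle_{\partial\mathcal T_h}$. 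Splitting $c_j\bm\Pi_V^j\bm q_j=c_j\bm q_j+c_j(\bm\Pi_V^j\bm q_j-\bm q_j)$ isolates the claimed right-hand side, Green's formula collapses $-(u_j,\nabla\cdot\bm r_h)_{\mathcal T_h}+\langle u_j,\bm r_h\cdot\bm n\rangle_{\partial\mathcal T_h}$ into $(\nabla u_j,\bm r_h)_{\mathcal T_h}$, and the first equation $c_j\bm q_j+\nabla u_j=0$ of \eqref{concection_pde} annihilates $(c_j\bm q_j+\nabla u_j,\bm r_h)_{\mathcal T_h}$, leaving precisely $(c_j(\bm\Pi_V^j\bm q_j-\bm q_j),\bm r_h)_{\mathcal T_h}$.

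For the second identity I would insert the projection errors through $\nabla\cdot\bm\Pi_V^j\bm q_j=\nabla\cdot\bm q_j+\nabla\cdot(\bm\Pi_V^j\bm q_j-\bm q_j)$ and $\bm\beta_j\cdot\nabla\Pi_W^j u_j=\bm\beta_j\cdot\nabla u_j+\bm\beta_j\cdot\nabla(\Pi_W^j u_j-u_j)$. Integrating the two error contributions by parts elementwise and using $\nabla\cdot\bm\beta_j=0$ from \eqref{beta_con} turns them into $-(\bm\Pi_V^j\bm q_j-\bm q_j+\bm\beta_j(\Pi_W^j u_j-u_j),\nabla v_h)_{\mathcal T_h}$ plus matching face integrals; the volume part vanishes by the HDG projection equation \eqref{projection_operator_1} tested against $\bm r=\nabla v_h\in[\mathcal P^{k-1}(K)]^d$. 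Replacing the principal volume terms via $(\nabla\cdot\bm q_j,v_h)_{\mathcal T_h}+(\bm\beta_j\cdot\nabla u_j,v_h)_{\mathcal T_h}=(f_j-\partial_t u_j,v_h)_{\mathcal T_h}$ then leaves only boundary contributions.

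The hard part is the bookkeeping of those boundary contributions. First I would use that the exact flux $\bm q_j$ lies in $H(\mathrm{div})$ and $u_j$ is single-valued, together with the single-valuedness of $\widehat v_h\in M_h$ and $\widehat v_h=0$ on $\mathcal E_h^\partial$, to insert the vanishing terms $\langle\bm q_j\cdot\bm n,\widehat v_h\rangle_{\partial\mathcal T_h}=0$ and $\langle(\bm\beta_j\cdot\bm n)u_j,\widehat v_h\rangle_{\partial\mathcal T_h}=0$, which lets me regroup every surviving face integral against $v_h-\widehat v_h$. The remainder is $\langle(\bm\Pi_V^j\bm q_j-\bm q_j)\cdot\bm n+(\bm\beta_j\cdot\bm n)(\Pi_W^j u_j-u_j)+\tau(\Pi_W^j u_j-P_M u_j),v_h-\widehat v_h\rangle_{\partial\mathcal T_h}$, which is annihilated by \eqref{projection_operator_3}: on each face $v_h-\widehat v_h$ restricts to an admissible test function $\mu\in\mathcal P^k(e)$, and $\langle\tau(u_j-P_M u_j),\mu\rangle_e=0$ by the definition of $P_M$, so the $P_M$ in the $\tau$-term may be replaced by the identity and \eqref{projection_operator_3} applies verbatim. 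All remaining steps are routine.
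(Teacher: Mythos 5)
Your proposal is correct and follows essentially the same route as the paper: the paper omits the proof of this lemma precisely because it is the proof of \Cref{lemma_error} with the superscript $n$ dropped, and your argument reproduces that proof — splitting off the projection errors, invoking \eqref{projection_operator_2}, \eqref{L2_edge}, Green's formula and the first PDE equation for the flux identity, then \eqref{projection_operator_1} tested with $\nabla v_h$, the PDE, and \eqref{projection_operator_3} for the second. Your explicit bookkeeping of the boundary terms (inserting $\langle\bm q_j\cdot\bm n,\widehat v_h\rangle_{\partial\mathcal T_h}=0$ and $\langle(\bm\beta_j\cdot\bm n)u_j,\widehat v_h\rangle_{\partial\mathcal T_h}=0$, and replacing $P_M u_j$ by $u_j$ in the $\tau$-term via the $L^2$-orthogonality of $P_M$ and the facewise constancy of $\tau$) correctly fills in steps the paper's proof of \Cref{lemma_error} leaves implicit.
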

The proof is similar to the proof of \Cref{lemma_error}, hence we omit it here.

To simplify the notation, we set
$$ \varepsilon^{u _j}_{h}=\overline u_{jh} -\Pi_W^j u_j ,\ \varepsilon^{\bm q_j}_{h}= \overline {\bm q}_{jh} -\bm \Pi_V ^j\bm q_j ,\
\varepsilon^{\widehat{u} _j}_{h}=\widehat{\overline u}_{jh} -P_M u_j.$$
Subtract  \eqref{projection} from \eqref{or} to get the following
\begin{lemma} \label{error_pi}We have the error equations
	\begin{subequations}\label{error}
		\begin{align}
		(c_j \varepsilon_{jh}^{\bm q },\bm{r}_h)_{\mathcal{T}_h}
		-(\varepsilon^{u }_{jh},\nabla\cdot \bm{r}_h)_{\mathcal{T}_h}+\langle \varepsilon^{\widehat{u} }_{jh},\bm{r}_h\cdot \bm n \rangle_{\partial{\mathcal{T}_h}}=(c_j (\bm\Pi_V^j\bm q_j-\bm q_j),\bm{r}_h)_{\mathcal{T}_h},\label{44a}\\
		(\nabla\cdot \varepsilon_{jh}^{\bm q }, v_h)_{\mathcal{T}_h}-\langle \varepsilon^{\bm{q} }_{jh}\cdot \bm{n},\widehat{v}_h\rangle_{\partial{\mathcal{T}_h}}
		+( {\bm \beta}_j \cdot \nabla \varepsilon^{u }_{jh}, v_h)_{\mathcal{T}_h}+	\langle \tau ( \varepsilon^{u }_{jh}-\varepsilon^{\widehat{u}}_{jh}),v_h-\widehat{v}_h \rangle_{\partial\mathcal{T}_h} \nonumber\\
		-\langle {\bm \beta}_j \cdot\bm n, \varepsilon^{{u} }_{jh} \widehat{v}_h\rangle_{\partial\mathcal{T}_h} =(\partial_t u_j-\partial_t \Pi_W^j u_j,v_h)_{\mathcal{T}_h}.\label{44b}
		\end{align}
	\end{subequations}
	for all $(\bm r_h,v_h,\widehat{v}_h)\in \bm V_h\times W_h\times M_h$.
\end{lemma}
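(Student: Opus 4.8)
The plan is to obtain both error equations \eqref{44a}--\eqref{44b} by a purely algebraic subtraction, since by construction $\varepsilon^{u_j}_{h},\varepsilon^{\bm q_j}_{h},\varepsilon^{\widehat u_j}_{h}$ are the differences between the HDG elliptic projection $(\overline{\bm q}_{jh},\overline u_{jh},\widehat{\overline u}_{jh})$, which solves \eqref{projection}, and the HDG projection $(\bm\Pi_V^j\bm q_j,\Pi_W^j u_j,P_M u_j)$ of the exact solution. First I would take as given the variational identities of \Cref{lemma41}, which the HDG projection of $(\bm q_j,u_j)$ satisfies; these are proved exactly as in \Cref{lemma_error}, by testing against the projection properties \eqref{projection_operator_2}, \eqref{L2_edge}, integrating by parts, and using the PDE $c_j\bm q_j+\nabla u_j=0$ for the first identity, and \eqref{projection_operator_1}, \eqref{projection_operator_3} together with $\nabla\cdot\bm\beta_j=0$ for the second.

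With both systems available, I would subtract \eqref{projection} from the matching line of \Cref{lemma41} and collect terms using linearity of every form and the definitions of the three error functions. In the first equation the volume, divergence, and trace terms combine into $(c_j\varepsilon^{\bm q_j}_{h},\bm r_h)_{\mathcal T_h}-(\varepsilon^{u_j}_{h},\nabla\cdot\bm r_h)_{\mathcal T_h}+\langle\varepsilon^{\widehat u_j}_{h},\bm r_h\cdot\bm n\rangle_{\partial\mathcal T_h}$, while the projection-consistency term $(c_j(\bm\Pi_V^j\bm q_j-\bm q_j),\bm r_h)_{\mathcal T_h}$ survives on the right, giving \eqref{44a}. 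In the second equation the flux, stabilization, and convection contributions assemble into the left-hand side of \eqref{44b}; the two source terms $(f_j-\partial_t u_j,v_h)_{\mathcal T_h}$ and $(f_j-\Pi_W^j\partial_t u_j,v_h)_{\mathcal T_h}$ partially cancel and leave $(\partial_t u_j-\Pi_W^j\partial_t u_j,v_h)_{\mathcal T_h}$. Here I would invoke the commutativity $\Pi_W^j\partial_t u_j=\partial_t\Pi_W^j u_j$ noted at the start of the appendix (valid because $\bm\beta_j$ is time independent) to rewrite this exactly as the right-hand side of \eqref{44b}.

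The only genuinely delicate point is the bookkeeping of the Dirichlet data on $\mathcal E_h^\partial$, and I expect this — rather than any estimate — to be the step one must get right. On a boundary face the numerical trace $\widehat{\overline u}_{jh}$ vanishes because $M_h$ has zero boundary trace, whereas $P_M u_j=P_M g_j$ since $u_j=g_j$ on $\partial\Omega$; thus the datum $-\langle g_j,\bm r_h\cdot\bm n\rangle_{\mathcal E_h^\partial}$ on the right of \eqref{full_discretion_ensemble_steady1_a} must be matched against the boundary part of the trace term of \Cref{lemma41}. Because $\bm r_h\cdot\bm n|_e\in\mathcal P^k(e)$ and $\tau v_h|_e\in\mathcal P^k(e)$ (with $\tau$ piecewise constant), the defining property \eqref{L2_edge} of $P_M$ yields $\langle P_M g_j,\bm r_h\cdot\bm n\rangle_e=\langle g_j,\bm r_h\cdot\bm n\rangle_e$ and $\langle\tau P_M g_j,v_h\rangle_e=\langle\tau g_j,v_h\rangle_e$, so that all $g_j$-contributions cancel and no boundary source remains in \eqref{44a} or \eqref{44b}. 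Everything else being linear manipulation of the variational forms, the statement of \Cref{error_pi} then follows immediately.
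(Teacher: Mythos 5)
Your proposal is correct and is essentially the paper's own argument: the paper establishes this lemma by the very same algebraic subtraction (its instruction to subtract \eqref{projection} from \eqref{or} is a referencing typo---the subtraction is between \eqref{projection} and the identities of \Cref{lemma41}, exactly as you do), combined with the commutativity $\partial_t\Pi_W^j u_j=\Pi_W^j\partial_t u_j$ noted at the start of the appendix. Your explicit cancellation of the $g_j$ boundary contributions via \eqref{L2_edge} (using that $\bm r_h\cdot\bm n|_e$ and $\tau v_h|_e$ lie in $\mathcal P^k(e)$) is precisely the bookkeeping the paper leaves implicit; the only caveat is that the careful subtraction, with $\varepsilon^{\bm q_j}_{h}=\overline{\bm q}_{jh}-\bm\Pi_V^j\bm q_j$, actually produces $(c_j(\bm q_j-\bm\Pi_V^j\bm q_j),\bm r_h)_{\mathcal T_h}$ on the right of \eqref{44a}, so the sign printed there (and reproduced in your write-up) is off by a factor $-1$---a harmless slip, since every later use of \eqref{44a} bounds this term via Cauchy--Schwarz.
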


\subsection{ Estimates for $ q_j$}
\begin{lemma}
	We have 
	\begin{align*}
	\hspace{1em}&\hspace{-1em}\|\sqrt{c_j}\varepsilon^{\bm{q} }_{jh}\|_{\mathcal{T}_h}
	+\|\sqrt{\tau}(\varepsilon^{u }_{jh}-\varepsilon^{{\widehat{u}} }_{jh})\|_{\partial\mathcal{T}_h}\\
	&\le  
	C \|\partial _t u_j-\partial _t \Pi_W^j u_j\|_{\mathcal{T}_h}+C \|\bm q_j-\bm\Pi_V^j \bm q_j\|_{\mathcal{T}_h} +C \|\varepsilon^{u }_{jh}\|_{\mathcal{T}_h}.
	\end{align*}
\end{lemma}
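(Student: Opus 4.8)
The plan is to run the standard HDG energy argument on the error equations of \Cref{error_pi}, mirroring the computations already carried out for the stability estimate in \Cref{Stability} and in \Cref{energy_quu}. First I would test \eqref{44a} with $\bm r_h=\varepsilon^{\bm q}_{jh}$ and test \eqref{44b} with $v_h=\varepsilon^{u}_{jh}$, $\widehat v_h=\varepsilon^{\widehat u}_{jh}$, and then add the two identities. The point of this choice is that the two mixed couplings cancel: the volume term $-(\varepsilon^{u}_{jh},\nabla\cdot\varepsilon^{\bm q}_{jh})_{\mathcal T_h}$ from \eqref{44a} cancels $(\nabla\cdot\varepsilon^{\bm q}_{jh},\varepsilon^{u}_{jh})_{\mathcal T_h}$ from \eqref{44b}, and the two face couplings $\langle\varepsilon^{\widehat u}_{jh},\varepsilon^{\bm q}_{jh}\cdot\bm n\rangle_{\partial\mathcal T_h}$ and $-\langle\varepsilon^{\bm q}_{jh}\cdot\bm n,\varepsilon^{\widehat u}_{jh}\rangle_{\partial\mathcal T_h}$ cancel as well. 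What survives on the left is the coercive block $\|\sqrt{c_j}\varepsilon^{\bm q}_{jh}\|^2_{\mathcal T_h}+\|\sqrt\tau(\varepsilon^{u}_{jh}-\varepsilon^{\widehat u}_{jh})\|^2_{\partial\mathcal T_h}$ together with the convection pair $(\bm\beta_j\cdot\nabla\varepsilon^{u}_{jh},\varepsilon^{u}_{jh})_{\mathcal T_h}-\langle\bm\beta_j\cdot\bm n,\varepsilon^{u}_{jh}\varepsilon^{\widehat u}_{jh}\rangle_{\partial\mathcal T_h}$, while the right-hand side carries only the two data terms $(c_j(\bm\Pi_V^j\bm q_j-\bm q_j),\varepsilon^{\bm q}_{jh})_{\mathcal T_h}$ and $(\partial_t u_j-\partial_t\Pi_W^j u_j,\varepsilon^{u}_{jh})_{\mathcal T_h}$.

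The convection pair is the only nontrivial ingredient, and I would treat it exactly as the $\bar{\bm\beta}^n$ term is treated in \Cref{energy_quu}. Element-by-element Green's formula together with $\nabla\cdot\bm\beta_j=0$ gives $(\bm\beta_j\cdot\nabla\varepsilon^{u}_{jh},\varepsilon^{u}_{jh})_{\mathcal T_h}=\tfrac12\langle\bm\beta_j\cdot\bm n,(\varepsilon^{u}_{jh})^2\rangle_{\partial\mathcal T_h}$, and since $\varepsilon^{\widehat u}_{jh}\in M_h$ is single valued on interior faces and vanishes on $\partial\Omega$, we have $\langle\bm\beta_j\cdot\bm n,(\varepsilon^{\widehat u}_{jh})^2\rangle_{\partial\mathcal T_h}=0$. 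Completing the square then identifies the convection contribution as $\tfrac12\langle\bm\beta_j\cdot\bm n,(\varepsilon^{u}_{jh}-\varepsilon^{\widehat u}_{jh})^2\rangle_{\partial\mathcal T_h}$, so that the stabilization and convection combine into the single boundary weight $\langle\tau+\tfrac12\bm\beta_j\cdot\bm n,(\varepsilon^{u}_{jh}-\varepsilon^{\widehat u}_{jh})^2\rangle_{\partial\mathcal T_h}$. By the stabilization condition \eqref{tau} this effective weight is bounded below by $\tfrac12\max_k\|\bm\beta_k\|_{0,\infty}\ge0$, which is precisely what keeps the left-hand side coercive and preserves control of $\|\sqrt\tau(\varepsilon^{u}_{jh}-\varepsilon^{\widehat u}_{jh})\|_{\partial\mathcal T_h}$, just as in the stability estimate.

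For the right-hand side I would use Cauchy--Schwarz and Young's inequality. I would write $(c_j(\bm\Pi_V^j\bm q_j-\bm q_j),\varepsilon^{\bm q}_{jh})_{\mathcal T_h}=(\sqrt{c_j}(\bm\Pi_V^j\bm q_j-\bm q_j),\sqrt{c_j}\varepsilon^{\bm q}_{jh})_{\mathcal T_h}$ and split off $\tfrac12\|\sqrt{c_j}\varepsilon^{\bm q}_{jh}\|^2_{\mathcal T_h}$ to be absorbed on the left, using $c_0\le c_j\le\|c_j\|_{0,\infty}$ from \textbf{(A2)} to trade $\|\sqrt{c_j}(\bm\Pi_V^j\bm q_j-\bm q_j)\|_{\mathcal T_h}$ for $C\|\bm q_j-\bm\Pi_V^j\bm q_j\|_{\mathcal T_h}$. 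The second data term is bounded simply by $\|\partial_t u_j-\partial_t\Pi_W^j u_j\|_{\mathcal T_h}\,\|\varepsilon^{u}_{jh}\|_{\mathcal T_h}$; note that the coercive left-hand side carries no volume $L^2$ control of $\varepsilon^{u}_{jh}$, so this factor cannot be absorbed and must remain on the right, which is exactly why $\|\varepsilon^{u}_{jh}\|_{\mathcal T_h}$ appears in the stated bound. Collecting the two surviving squared quantities and taking square roots, with $\sqrt{a+b}\le\sqrt a+\sqrt b$, then yields the claimed inequality.

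I expect the only real difficulty to be the convection step, namely carrying out the integration by parts cleanly, invoking single-valuedness of $\varepsilon^{\widehat u}_{jh}$ on interior faces together with its vanishing trace on $\partial\Omega$ to eliminate $\langle\bm\beta_j\cdot\bm n,(\varepsilon^{\widehat u}_{jh})^2\rangle_{\partial\mathcal T_h}$, and then verifying through condition \eqref{tau} that the combined weight $\tau+\tfrac12\bm\beta_j\cdot\bm n$ stays coercive so that the $\|\sqrt\tau(\cdot)\|_{\partial\mathcal T_h}$ term on the left is genuinely retained rather than lost to the convection. Everything else is the routine cancellation-plus-Young bookkeeping already used, essentially verbatim, in the energy arguments of \Cref{Stability} and \Cref{energy_quu}.
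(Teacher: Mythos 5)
Your proposal is correct and follows essentially the same route as the paper's own proof: testing the error equations \eqref{44a}--\eqref{44b} with $(\varepsilon^{\bm q}_{jh},\varepsilon^{u}_{jh},\varepsilon^{\widehat u}_{jh})$, cancelling the mixed couplings, rewriting the convection pair via elementwise Green's formula, $\nabla\cdot\bm\beta_j=0$, and $\langle(\bm\beta_j\cdot\bm n)\varepsilon^{\widehat u}_{jh},\varepsilon^{\widehat u}_{jh}\rangle_{\partial\mathcal T_h}=0$ into $\tfrac12\langle\bm\beta_j\cdot\bm n,(\varepsilon^{u}_{jh}-\varepsilon^{\widehat u}_{jh})^2\rangle_{\partial\mathcal T_h}$, and invoking condition \eqref{tau} before closing with Cauchy--Schwarz and Young's inequality. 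The only difference is that you spell out the absorption of the right-hand-side terms, which the paper leaves implicit.
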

\begin{proof}
	We take $(\bm r_h,v_h,\widehat{v}_h)=(\varepsilon^{\bm{q} }_{jh},\varepsilon^{u }_{jh},\varepsilon^{{\widehat{u}}}_{jh} )$
	in \eqref{error}, and add them together to get
	\begin{align*}
	\hspace{1em}&\hspace{-1em} \|\sqrt{c_j}\varepsilon^{\bm{q} }_{jh}\|^2_{\mathcal{T}_h}
	+\|\sqrt{\tau}(\varepsilon^{u }_{jh}-\varepsilon^{{\widehat{u}} }_{jh})\|^2_{\mathcal{T}_h} + ( {\bm \beta}_j \cdot \nabla \varepsilon^{u }_{jh}, \varepsilon^{u }_{jh})_{\mathcal{T}_h}
	-\langle {\bm \beta}_j \cdot\bm n, \varepsilon^{{u} }_{jh} \varepsilon^{\widehat{u} }_{jh}\rangle_{\partial\mathcal{T}_h}\nonumber\\
	&=
	(c_j  (\bm\Pi_V^j\bm q_j-\bm q_j),\varepsilon^{\bm q }_{jh})_{\mathcal{T}_h}+
	(\partial_t u_j-\partial_t \Pi_W^j u_j,\varepsilon^{u }_{jh})_{\mathcal{T}_h}.
	\end{align*}
	By Green's formula and the fact
	$\langle ({\bm \beta}_j \cdot\bm n )\varepsilon^{\widehat{u} }_{jh},  \varepsilon^{\widehat{u} }_{jh}\rangle_{\partial\mathcal{T}_h}=0$
	we have
	\begin{align}
	( {\bm \beta}_j \cdot \nabla \varepsilon^{u }_{jh}, \varepsilon^{u }_{jh})_{\mathcal{T}_h}
	-\langle {\bm \beta}_j \cdot\bm n, \varepsilon^{{u} }_{jh} \varepsilon^{\widehat{u} }_{jh}\rangle_{\partial\mathcal{T}_h}
	\le \frac{1}{2}\|\sqrt{ |{\bm \beta}_j \cdot\bm n}|
	(\varepsilon^{u }_{jh}-\varepsilon^{\widehat{u} }_{jh})\|^2_{\partial\mathcal{T}_h}.
	\end{align}
	Then by  condition \eqref{tau}, we get the desired result.
\end{proof}

\subsection{Dual arguments}

The next step is the consideration of the dual problems:
\begin{equation}\label{Dual_PDE1}
\begin{split}
c_j\bm{\Phi}_j+\nabla\Psi_j&=0\qquad\qquad~\text{in}\ \Omega,\\
\nabla\cdot\bm \Phi_j-\bm{\beta}_j\cdot\nabla\Psi_j  &=\Theta_j\qquad\quad~~\text{in}\ \Omega,\\
\Psi_j &= 0\qquad\qquad~\text{on}\ \partial\Omega.
\end{split}
\end{equation}
{\bfseries{Elliptic regularity}.} To obatin the superconvergent rate, we are going to assume that the domain $\Omega$ is such that for any $\Theta_j \in L^2(\Omega)$, we have the regularity estimates
for  these boundary value problems \eqref{Dual_PDE1}:
\begin{align}\label{regularity_PDE}
\|\bm \Phi_j\|_{H^{1}(\Omega)} + \|\Psi_j\|_{H^{2}(\Omega)} \le C   \|\Theta_j\|_{L^{2}(\Omega)}.
\end{align}
It is well known that this holds whenever $\Omega$ is a convex polyhedral domain.

\begin{lemma} 
	If the elliptic regularity inequality \eqref{Dual_PDE1} holds, then we have the error estimates
	\begin{align*}
	\|\sqrt{ c_j }\varepsilon^{\bm{q} }_{jh}\|_{\mathcal{T}_h}
	+\|\sqrt{\tau}(\varepsilon^{u}_{jh}-\varepsilon^{{\widehat{u}} }_{jh})\|_{\partial \mathcal{T}_h} \le C\mathcal A_j,\\
	\|\varepsilon^{u }_{jh}\|_{\mathcal{T}_h} \le Ch^{k+\min\{k,1\}} \mathcal A_j,
	\end{align*}
	where
	\begin{align*}
	\mathcal A_j = \|u_j - \Pi_W^j u_j\|_{\mathcal T_h}+\|\bm q_j - \bm\Pi_V^j \bm q_j\|_{\mathcal T_h} +\|\partial_tu_j - \Pi_W^j \partial_tu_j\|_{\mathcal T_h}.
	\end{align*}
\end{lemma}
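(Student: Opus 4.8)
The plan is to prove the $L^2$ bound on $\varepsilon^u_{jh}$ by an Aubin--Nitsche duality argument and then to deduce the energy bound by feeding it back into the preceding lemma. First I would take the dual problem \eqref{Dual_PDE1} with data $\Theta_j = \varepsilon^u_{jh}$, so that the elliptic regularity estimate \eqref{regularity_PDE} gives $\|\bm\Phi_j\|_{H^1(\Omega)} + \|\Psi_j\|_{H^2(\Omega)} \le C\|\varepsilon^u_{jh}\|_{\mathcal T_h}$. Pairing the second dual equation with $\varepsilon^u_{jh}$ yields
\begin{align*}
\|\varepsilon^u_{jh}\|^2_{\mathcal T_h} = (\varepsilon^u_{jh},\, \nabla\cdot\bm\Phi_j - \bm\beta_j\cdot\nabla\Psi_j)_{\mathcal T_h},
\end{align*}
and the task becomes to rewrite the right-hand side so that the error equations \eqref{44a}--\eqref{44b} can be invoked.

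Next I would introduce the HDG projection $(\bm\Pi_V^j\bm\Phi_j, \Pi_W^j\Psi_j)$ of the dual solution, integrate by parts element by element, and insert $P_M\Psi_j$ on the interfaces so that single-valuedness of the numerical traces can be used; $\nabla\cdot\bm\beta_j = 0$ lets me recast the convection term in the form appearing in \eqref{44b}. Testing \eqref{44a}--\eqref{44b} with test functions assembled from these dual projections and using the defining relations \eqref{projection_operator_1}--\eqref{projection_operator_3} together with $c_j\bm\Phi_j + \nabla\Psi_j = 0$, the volume, stabilization and interface terms should telescope. What remains is a pairing of the primal energy quantities $\sqrt{c_j}\,\varepsilon^{\bm q}_{jh}$ and $\sqrt\tau\,(\varepsilon^u_{jh} - \varepsilon^{\widehat u}_{jh})$ against the dual approximation errors, together with --- and this is the delicate point --- a pairing of the leading data term $c_j(\bm\Pi_V^j\bm q_j - \bm q_j)$ against a dual \emph{approximation error} rather than against the full dual solution.

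I would then apply Cauchy--Schwarz termwise. Because the dual solution is only $H^1\times H^2$ regular, \Cref{pro_error} limits the relevant dual approximation errors $\bm\Phi_j - \bm\Pi_V^j\bm\Phi_j$ and $\Psi_j - \Pi_W^j\Psi_j$ to order $h^{\min\{k,1\}}$, so that, after elliptic regularity, they are bounded by $Ch^{\min\{k,1\}}\|\varepsilon^u_{jh}\|_{\mathcal T_h}$. Meanwhile the primal factors $\|\sqrt{c_j}\varepsilon^{\bm q}_{jh}\|_{\mathcal T_h}$, $\|\sqrt\tau(\varepsilon^u_{jh} - \varepsilon^{\widehat u}_{jh})\|_{\partial\mathcal T_h}$ and $\|\bm q_j - \bm\Pi_V^j\bm q_j\|_{\mathcal T_h}$ are all controlled by $C\mathcal A_j + C\|\varepsilon^u_{jh}\|_{\mathcal T_h}$ via the preceding energy lemma. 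Collecting gives $\|\varepsilon^u_{jh}\|^2_{\mathcal T_h} \le Ch^{\min\{k,1\}}\|\varepsilon^u_{jh}\|_{\mathcal T_h}\,(\mathcal A_j + \|\varepsilon^u_{jh}\|_{\mathcal T_h})$; dividing by $\|\varepsilon^u_{jh}\|_{\mathcal T_h}$ and absorbing the term $h^{\min\{k,1\}}\|\varepsilon^u_{jh}\|_{\mathcal T_h}$ for $h$ sufficiently small yields $\|\varepsilon^u_{jh}\|_{\mathcal T_h} \le Ch^{\min\{k,1\}}\mathcal A_j$ (which is \eqref{p2}). Feeding this back into the preceding lemma, where $\|\varepsilon^u_{jh}\|_{\mathcal T_h} \le C\mathcal A_j$ already suffices, then gives the first estimate.

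The hard part will be the telescoping bookkeeping, above all verifying that the data term $c_j(\bm\Pi_V^j\bm q_j - \bm q_j)$ is forced to pair with a dual approximation error: this is precisely what supplies the extra factor $h^{\min\{k,1\}}$ and hence the superconvergence, and it hinges on the orthogonality built into \eqref{projection_operator_1}--\eqref{projection_operator_3} and on $c_j\bm\Phi_j = -\nabla\Psi_j$. A secondary subtlety is the adjoint convection $-\bm\beta_j\cdot\nabla\Psi_j$ in the dual problem: I would integrate it by parts and use $\nabla\cdot\bm\beta_j = 0$ to bring it into the form matched by the projection identities, and I would check that the stabilization pairings $\langle\tau(\cdot),\cdot\rangle_{\partial\mathcal T_h}$ and the single-valued numerical traces cancel the spurious interface couplings.
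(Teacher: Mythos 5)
Your strategy coincides with the paper's: the paper likewise sets $\Theta_j=\varepsilon^{u}_{jh}$ in \eqref{Dual_PDE1}, writes the projected dual equations (the dual analogue of \Cref{lemma41}), tests them with $(\varepsilon^{\bm q}_{jh},\varepsilon^{u}_{jh},\varepsilon^{\widehat u}_{jh})$, eliminates terms using the primal error equations \eqref{44a}--\eqref{44b} and Green's formula, bounds what remains using elliptic regularity \eqref{regularity_PDE} and the preceding energy lemma, absorbs the $\|\varepsilon^{u}_{jh}\|$--terms for $h$ small, and finally feeds the $L^2$ bound back into the energy lemma to obtain the first estimate. (You target $\|\varepsilon^{u}_{jh}\|_{\mathcal T_h}\le Ch^{\min\{k,1\}}\mathcal A_j$; the exponent $k+\min\{k,1\}$ in the statement is a typo, since the paper's own proof and \eqref{p2} produce $h^{\min\{k,1\}}$, so you are proving the intended inequality.)

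There is, however, a concrete bookkeeping error that breaks your argument at $k=0$, a case the lemma covers (indeed it is the reason $\min\{k,1\}$ appears at all). You attribute the factor $h^{\min\{k,1\}}$ to the dual approximation errors; but by \Cref{pro_error} (taken with the lowest Sobolev indices) together with \eqref{regularity_PDE}, the volume errors $\|\bm\Phi_j-\bm\Pi_V^j\bm\Phi_j\|_{\mathcal T_h}$ and $\|\Psi_j-\Pi_W^j\Psi_j\|_{\mathcal T_h}$ are of order $h\,\|\varepsilon^{u}_{jh}\|_{\mathcal T_h}$ for \emph{every} $k\ge0$, and the face term $\|P_M\Psi_j-\Pi_W^j\Psi_j\|_{\partial\mathcal T_h}$ is of order $h^{1/2+\min\{k,1\}}\|\varepsilon^{u}_{jh}\|_{\mathcal T_h}$. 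In the paper, the factor $h^{\min\{k,1\}}$ comes from elsewhere: from the pure data terms ($R_2$, $R_4$, $R_5$ there), which are paired against the \emph{full} dual projections $\bm\Pi_V^j\bm\Phi_j$, $\nabla\Pi_W^j\Psi_j$, $\Pi_W^j\Psi_j$, and whose gain of one power of $h$ is available only for $k\ge1$, through the orthogonality built into \eqref{projection_operator_1}--\eqref{projection_operator_3} (for $k=0$ there is no such orthogonality, whence $\min\{k,1\}$). The distinction is not cosmetic. In your collected inequality $\|\varepsilon^{u}_{jh}\|^2_{\mathcal T_h}\le Ch^{\min\{k,1\}}\|\varepsilon^{u}_{jh}\|_{\mathcal T_h}\bigl(\mathcal A_j+\|\varepsilon^{u}_{jh}\|_{\mathcal T_h}\bigr)$ the coefficient of $\|\varepsilon^{u}_{jh}\|^2_{\mathcal T_h}$ equals $C$ when $k=0$, and ``absorbing for $h$ sufficiently small'' is then impossible. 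In the paper's accounting, the quadratic contributions $\|\varepsilon^{u}_{jh}\|^2$ arise only where the primal energy quantities meet dual approximation errors ($R_1$, $R_3$), hence carry coefficients $Ch$ and $Ch^{1/2+\min\{k,1\}}\le Ch^{1/2}$ that vanish as $h\to0$ for every $k$, while the terms carrying only $h^{\min\{k,1\}}$ are linear in $\|\varepsilon^{u}_{jh}\|$. With that separation the absorption goes through for all $k\ge0$; with your lumping it goes through only for $k\ge1$.
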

\begin{proof} Similar to \Cref{lemma41}, we have the following equations:
	\begin{align*}
	(c_j \bm\Pi_V^j\bm \Phi_j, \bm{r}_h)_{\mathcal{T}_h}-(\Pi_W^j\Psi,\nabla\cdot \bm{r}_h)_{\mathcal{T}_h}+\langle
	P_M \Psi_j,\bm{r}_h\cdot \bm n \rangle_{\partial{\mathcal{T}_h}}
	=
	(c_j  (\bm \Pi_V^j\bm \Phi_j-\bm \Phi_j),\bm{r}_h)_{\mathcal{T}_h},\\
	(\nabla\cdot\bm\Pi_V^j \bm \Phi_j, v_h)_{\mathcal{T}_h}-\langle \bm\Pi_V^j \bm  \Phi_j \cdot \bm{n},\widehat{v}_h\rangle_{\partial{\mathcal{T}_h}}
	+\langle \tau (\Pi_W^j \Psi-P_M \Psi_j),v_h-\widehat{v}_h \rangle_{\partial\mathcal{T}_h}\nonumber\\
	-( {\bm \beta}_j \cdot \nabla \Pi_W^j \Psi_j, v_h)_{\mathcal{T}_h}
	+\langle{\bm \beta}_j \cdot\bm n, \Pi_W^j u_j \widehat{v}_h\rangle_{\partial\mathcal{T}_h}=({\Theta}_j,v_h)_{\mathcal{T}_h}.
	\end{align*}
	Take $(\bm r_h,v_h,\widehat{v}_h)=(\varepsilon^{\bm{q} }_{jh},\varepsilon^{u }_{jh},\varepsilon^{{\widehat{u}}}_{jh} )$ and $\Theta_j=\varepsilon^{u}_{jh}$  above to get,  
	\begin{align*}
	\|\varepsilon^{u }_{jh}\|^2_{\mathcal{T}_h}
	&=(\nabla\cdot\bm\Pi_V^j \bm\Phi_j, \varepsilon^{u }_{jh})_{\mathcal{T}_h}-\langle \bm\Pi_V^j \bm \Phi_j\cdot \bm{n},\varepsilon^{{\widehat{u}} }_{jh} \rangle_{\partial{\mathcal{T}_h}}\\
	&\quad+\langle \tau (\Pi_W^j \Psi_j-P_M \Psi_j),\varepsilon^{u }_{jh}-\varepsilon^{{\widehat{u}} }_{jh} \rangle_{\partial\mathcal{T}_h}\\
	&\quad -( {\bm \beta}_j \cdot \nabla \Pi_W^j \Psi_j, \varepsilon^{u }_{jh})_{\mathcal{T}_h}
	+\langle{\bm \beta}_j \cdot\bm n, \Pi_W^j\Psi_j \varepsilon^{{\widehat{u}} }_{jh} \rangle_{\partial\mathcal{T}_h}.
	\end{align*}
	By \eqref{44a} one gets
	\begin{align*}
	\|\varepsilon^{u }_{jh}\|^2_{\mathcal{T}_h}
	&=(c_j \varepsilon_{jh}^{\bm q },\bm\Pi_V^j \bm\Phi_j)_{\mathcal{T}_h}
	-(c_j  (\bm\Pi_V^j\bm q_j-\bm q_j),\bm\Pi_V^j \bm\Phi_j)_{\mathcal{T}_h}+\langle{\bm \beta}_j \cdot\bm n, \Pi_W^j \Psi_j \varepsilon^{{\widehat{u}} }_{jh} \rangle_{\partial\mathcal{T}_h}\\
	&\quad +\langle \tau (\Pi_W^j \Psi_j-P_M \Psi_j),\varepsilon^{u }_{jh}-\varepsilon^{{\widehat{u}} }_{jh}\rangle_{\partial\mathcal{T}_h} -({\bm \beta}_j \cdot \nabla \Pi_W^j \Psi_j, \varepsilon^{u }_{jh})_{\mathcal{T}_h}.
	\end{align*}
	Hence, 
	\begin{align*}
	\|\varepsilon^{u }_{jh}\|^2_{\mathcal{T}_h}
	&=(\Pi_W^j\Psi_j,\nabla\cdot \varepsilon_{jh}^{\bm q })_{\mathcal{T}_h}-\langle
	P_M \Psi_j,\varepsilon_{jh}^{\bm q }\cdot \bm n \rangle_{\partial{\mathcal{T}_h}}-
	(c_j  (\bm\Pi_V^j\bm \Phi_j-\bm \Phi_j),\varepsilon_{jh}^{\bm q })_{\mathcal{T}_h}\\
	&\quad-(c_j  (\bm\Pi_V^j\bm q_j-\bm q_j),\bm\Pi_V^j \bm \Phi_j)_{\mathcal{T}_h}
	+\langle \tau (\Pi_W^j \Psi_j-P_M \Psi_j),\varepsilon^{u }_{jh}-\varepsilon^{{\widehat{u}} }_{jh}  \rangle_{\partial\mathcal{T}_h}\\
	&\quad-( {\bm \beta}_j \cdot \nabla \Pi_W^j \Psi_j, \varepsilon^{u }_{jh})_{\mathcal{T}_h}
	+\langle{\bm \beta}_j \cdot\bm n, \Pi_W^j \Psi_j \varepsilon^{{\widehat{u}} }_{jh} \rangle_{\partial\mathcal{T}_h}.
	\end{align*}
	By Green's formula one gets
	\begin{align*}
	\|\varepsilon^{u }_{jh}\|^2_{\mathcal{T}_h}
	&=(\Pi_W^j\Psi,\nabla\cdot \varepsilon_{jh}^{\bm q })_{\mathcal{T}_h}-\langle
	P_M \Psi_j,\varepsilon_{jh}^{\bm q }\cdot \bm n \rangle_{\partial{\mathcal{T}_h}}-
	(c_j  (\bm\Pi_V^j\bm \Phi_j-\bm \Phi_j),\varepsilon_{jh}^{\bm q })_{\mathcal{T}_h}\\
	&\quad -(c_j  (\bm\Pi_V^j\bm q_j-\bm q_j),\bm\Pi_V^j \bm \Phi_j)_{\mathcal{T}_h}
	+\langle \tau (\Pi_W^j \Psi-P_M \Psi),\varepsilon^{u }_{jh}-\varepsilon^{{\widehat{u}} }_{jh}  \rangle_{\partial\mathcal{T}_h}\\
	&\quad +( {\bm \beta}_j \cdot \nabla \varepsilon^{u }_{jh}, \Pi_W^j \Psi_j)_{\mathcal{T}_h}
	+\langle{\bm \beta}_j \cdot\bm n, \Pi_W^j \Psi_j (\varepsilon^{{\widehat{u}} }_{jh}-\varepsilon^{u }_{jh} ) \rangle_{\partial\mathcal{T}_h}\\
	&=(\Pi_W^j\Psi_j,\nabla\cdot \varepsilon_{jh}^{\bm q})_{\mathcal{T}_h}-\langle
	P_M \Psi_j,\varepsilon_{jh}^{\bm q }\cdot \bm n \rangle_{\partial{\mathcal{T}_h}}-
	(c_j  (\bm\Pi_V^j\bm \Phi_j-\bm \Phi_j),\varepsilon_{jh}^{\bm q })_{\mathcal{T}_h}\\
	&\quad -(c_j  (\bm\Pi_V^j\bm q_j-\bm q_j),\bm\Pi_V^j \bm \Phi_j)_{\mathcal{T}_h}
	+\langle \tau (\Pi_W^j \Psi_j-P_M \Psi_j),\varepsilon^{u }_{jh}-\varepsilon^{{\widehat{u}} }_{jh} \rangle_{\partial\mathcal{T}_h}\\
	&\quad+( {\bm \beta}_j \cdot \nabla \varepsilon^{u }_{jh}, \Pi_W^j \Psi_j)_{\mathcal{T}_h}
	-\langle {\bm \beta}_j \cdot\bm n, \varepsilon^{{u} }_{jh} P_M\Psi_j\rangle_{\partial\mathcal{T}_h}+\langle {\bm \beta}_j \cdot\bm n, \varepsilon^{{u} }_{jh} P_M\Psi_j\rangle_{\partial\mathcal{T}_h}\\
	&\quad +\langle{\bm \beta}_j \cdot\bm n, \Pi_W^j \Psi_j (\varepsilon^{{\widehat{u}} }_{jh}-\varepsilon^{u }_{jh}) \rangle_{\partial\mathcal{T}_h}.
	\end{align*}
	By \eqref{44b} one gets
	\begin{align*}
	\|\varepsilon^{u }_{jh}\|^2_{\mathcal{T}_h}
	&=-
	(c_j  (\bm \Pi_V^j\bm \Phi_j-\bm \Phi_j),\varepsilon_{jh}^{\bm q })_{\mathcal{T}_h}
	-(c_j  (\bm\Pi_V^j\bm q_j-\bm q_j),\bm\Pi_V^j \bm \Phi_j)_{\mathcal{T}_h}\\
	&\quad+\langle {\bm \beta}_j \cdot\bm n, (\varepsilon^{{u} }_{jh}-\varepsilon_{jh}^{\widehat{u} }) (P_M\Psi_j-\Pi_W^j\Psi_j)\rangle_{\partial\mathcal{T}_h}\\
	&\quad+( {\bm \beta}_j \cdot \nabla  \Pi_W^j\Psi_j, \Pi_W^ju_j-u_j)_{\mathcal{T}_h} +(\partial_t u_j-\partial_t \Pi_W^j u_j,\Pi_W^j\Psi_j)_{\mathcal T_h}\\
	&=\sum_{i=1}^5 R_i.
	\end{align*}
	We estimate $\{R_i\}_{i=1}^5$ term by term:
	\begin{align*}
	R_1&\le Ch\|\bm{\Phi}_j\|_{1}
	\|\varepsilon_{jh}^{\bm q }\|_{\mathcal{T}_h}\\
	& \le C h\|\varepsilon_{jh}^{ u }\|_{\mathcal{T}_h}^2
	+Ch(\|u_j-\Pi_W^ju_j\|_{\mathcal{T}_h}
	+\|\bm q_j-\bm\Pi_V^j \bm q_j\|_{\mathcal{T}_h})
	\|\varepsilon_{jh}^{ u }\|_{\mathcal{T}_h},\\
	R_2&\le Ch^{\min\{k,1\}}\|\bm{\Phi}_j\|_{1}\|\bm q_j-\bm\Pi_V^j \bm q_j\|_{\mathcal{T}_h} \le Ch^{\min\{k,1\}}\|\varepsilon_{jh}^{u}\|_{\mathcal{T}_h}\|\bm q_j-\bm\Pi_V^j \bm q_j\|_{\mathcal{T}_h},\\
	R_3&\le Ch^{\frac 1 2 +\min\{k,1\}}\|\Psi_j\|_2
	\|\sqrt{\tau}(\varepsilon_{jh}^{u}-\varepsilon_{jh}^{\widehat{u} })\|_{\partial\mathcal{T}_h}\\
	& \le Ch^{\frac 1 2+\min\{k,1\}}
	(\|u_j-\Pi_W^ju_j\|_{\mathcal{T}_h}
	+\|\bm q_j-\bm\Pi_V^j \bm q_j\|_{\mathcal{T}_h})
	\|\varepsilon_{jh}^{ u }\|_{\mathcal{T}_h}\\
	&\quad+ Ch^{\frac 1 2+\min\{k,1\}}\|\varepsilon_{jh}^{ u }\|_{\mathcal{T}_h}^2,\\
	R_4&=( ({\bm \beta}_j -\bm{\Pi}_0\bm\beta_j )\cdot \nabla  \Pi_W^j \Psi_j, \Pi_W^ju_j-u_j)_{\mathcal{T}_h}\\
	&\le Ch\|\Psi_j\|_1\|u_j-\Pi_W^ju_j\|_{\mathcal{T}_h} \le Ch\|\varepsilon_{jh}^{u }\|_{\mathcal{T}_h}\|u_j-\Pi_W^ju_j\|_{\mathcal{T}_h},\\
	R_5&\le Ch^{\min\{k,1\}}\|\Psi_j\|_1
	\|\partial_tu_j-\Pi_W^j\partial_tu_j\|_{\mathcal{T}_h} \le Ch^{\min\{k,1\}}\|\varepsilon^{u }_{jh}\|_{\mathcal{T}_h} \|\partial_tu_j-\Pi_W^j\partial_tu_j\|_{\mathcal{T}_h}.
	\end{align*}
	Hence, we have 
	\begin{align*}
	\|\varepsilon^{u }_{jh}\|_{\mathcal{T}_h} \le Ch^{\min\{k,1\}}
	\left(\|u_j-\Pi_W^ju_j\|_{\mathcal{T}_h}
	+\|\bm q_j-\bm\Pi_V^j \bm q_j\|_{\mathcal{T}_h}
	+ \|\partial_tu_j-\Pi_W^j\partial_tu_j\|_{\mathcal{T}_h}
	\right).
	\end{align*}
\end{proof}

\bibliographystyle{plain}

\bibliography{Model_Order_Reduction,Ensemble,HDG,Interpolatory,Mypapers,Added,Drift_Diffusion}

\end{document}